\newtheorem{theorem}{Theorem}[section]
\newtheorem{lemma}[theorem]{Lemma}
\newtheorem{proposition}[theorem]{Proposition}
\theoremstyle{definition}
\theoremstyle{remark}
\newtheorem{remark}[theorem]{Remark}
\numberwithin{equation}{section}
\begin{document}
\title{The Neumann problem for Parabolic Hessian quotient equations}
\author{Chuanqiang Chen$^1$, Xinan Ma$^2$, Dekai Zhang$^3$}
\address{Chuanqiang Chen, School of Mathematics and Statistics, Ningbo University, Ningbo, 315211, Zhejiang Province, P.R. China.}
\address{Xinan Ma,  School of Mathematical Sciences, University of Science and Technology of China, Hefei, 230026, Anhui Province,  P.R. CHINA}
\address{Dekai Zhang, Shanghai Center for Mathematical Sciences, Jiangwan Campus, Fudan University, No.2005 Songhu Road, Shanghai, P.R. CHINA}
\thanks{ E-mail: $^1$chuanqiangchen@zjut.edu.cn, $^2$ xinan@ustc.edu.cn, $^3$ dkzhang@fudan.edu.cn}
\thanks{$^1$ Research of the first author was supported by ZJNSF NO. LY17A010022 and NSFC NO. 11771396. Research of the second author was supported by  NSFC No. 11721101 and NSFC No. 11871255. Research of the third author was supported by NSFC No.11901102 and  China Postdoctoral Science Foundation No.2019M651333}

\maketitle

\begin{abstract}
In this paper, we consider the Neumann problem for parabolic Hessian quotient equations. We show that the $k$-admissible solution of the parabolic Hessian quotient equation exists for all time and converges to the smooth solution of elliptic Hessian quotient equations. Also the solutions of the classical Neumann problem converge to a translating solution.
\end{abstract}
{\bf Key words}: parabolic Hessian quotient equation, Neumann problem, translating solution.

{\bf 2010 Mathematics Subject Classification}: 35J60, 35B45.

\section{Introduction}

In this paper, we consider the Neumann problem for parabolic Hessian quotient equation, which is of the form
\begin{align} \label{1.1}
\left\{ \begin{array}{l}
u_t= \log\frac{\sigma _k (D^2 u)}{\sigma _l (D^2 u)} - \log{f(x,u)}   \quad \text{in} \quad \Omega \times[0,T),\\
u_\nu = \varphi(x,u)\ \ \ \ \  \qquad \qquad \qquad \text{on} \quad \partial \Omega\times[0,T),\\
u(x,0)=u_0 \qquad \qquad \qquad   \qquad\text{in} \ \ \quad \Omega,
 \end{array}\right.
\end{align}
where $0 \leq l<k \leq n$, $\nu$ is outer unit normal vector of $\partial \Omega$, $T$ is the maximal time, and $\Omega \subset \mathbb{R}^n$, $n\ge2$ is a strictly convex bounded domain with smooth boundary. For any $k = 1, \cdots, n$,
\begin{equation*}
\sigma_k(D^2 u) = \sigma_k(\lambda(D^2 u)) = \sum _{1 \le i_1 < i_2 <\cdots<i_k\leq n}\lambda_{i_1}\lambda_{i_2}\cdots\lambda_{i_k},
\end{equation*}
with $\lambda(D^2 u) =(\lambda_1,\cdots,\lambda_n)$ being the eigenvalues of $D^2 u=: \{  \frac{\partial^2 u}{\partial x_i \partial x_j} \}$. We also set $\sigma_0=1$ for convenience.
And we recall that the G{\aa}rding's cone is defined as
\begin{equation*}
\Gamma_k  = \{ \lambda  \in \mathbb{R}^n :\sigma _i (\lambda ) > 0,\forall 1 \le i \le k\}.
\end{equation*}
For any $C^2$ function $u(x,t)$ (or $u(x)$), if $\lambda(D^2 u) \in \Gamma_k$ holds for any $(x, t) \in \Omega \times (0,T)$ (or $x \in \Omega$), we say $u$ is a $k$-convex function. If the solution $u(x,t)$ of \eqref{1.1} is $k$-convex, then the equation \eqref{1.1} is parabolic and we say $u$ is a $k$-admissible solution of \eqref{1.1}.

If $l=0$, \eqref{1.1} is known as the parabolic $k$-Hessian equation. In particular, \eqref{1.1} is the parabolic Laplace equation if $k =1, l=0$, and the parabolic Monge-Amp\`{e}re equation if $k =n, l=0$. Hessian quotient equation is a more general form of Hessian type equations. It appears naturally in classical geometry, conformal geometry and K\"{a}hler geometry.

 Firstly, we present a brief description for the Dirichlet problem of elliptic equations in $\mathbb{R}^n$. The Dirichlet problem for the Laplace equation is well studied in \cite{ChenWu98, GT}. For nonlinear elliptic equations, the pioneering works have been done by Evans in \cite{E82}, Krylov in \cite{K82, K83, K83a}, Caffarelli-Nirenberg-Spruck in \cite{CNS84, CNS85} and Ivochkina in \cite{I87}. In their papers, they solved the Dirichlet problem for Monge-Amp\`{e}re equations and $k$-Hessian equations elegantly. Since then, many interesting fully nonlinear equations with different structure conditions have been researched, such as Hessian quotient equations, which were solved by Trudinger in \cite{T95}. For more information, we refer the citations of \cite{CNS84}, etc.

For the curvature equations in classical geometry, the existence of hypersurfaces with prescribed Weingarten curvature was studied by Pogorelov \cite{P78}, Caffarelli-Nirenberg-Spruck \cite{CNS85_1, CNS88}, Guan-Guan \cite{GG02}, Guan-Ma \cite{GM03} and the later work by Sheng-Trudinger-Wang \cite{STW04}. The Hessian equation on Riemannian manifolds was also studied by Y.Y. Li \cite{L90}, Urbas \cite{U02} and Guan \cite{G14}. Hessian type equations also appear in conformal geometry, which started from Viaclovsky \cite{Via00},   Chang-Gursky-Yang \cite{CGY02}. In K\"{a}hler geometry, the Hessian equation was studied by Hou-Ma-Wu \cite{HMW10} and Dinew-Kolodziej \cite{DK12}.

Meanwhile, the Neumann and oblique derivative problem of partial differential equations were widely studied. For a priori estimates and the existence theorem of Laplace equation with Neumann boundary condition, we refer to the book \cite{GT}. Also, we recommend the recent book written by Lieberman \cite{L13} for the Neumann and the oblique derivative problems of linear and quasilinear elliptic equations. Especially for the mean curvature equation with prescribed contact angle boundary value problem, Ural'tseva \cite{U75}, Simon-Spruck \cite{SS76} and Gerhardt \cite{G76} got the boundary gradient estimates and the corresponding existence theorem. Recently in \cite{MX16}, the second author and J.J. Xu got the boundary gradient estimates and the corresponding existence theorem for the Neumann boundary value problem on mean curvature equation.

The Yamabe problem with boundary is an important motivation for the study of the Neumann problems. The Yamabe problem on manifolds with boundary was first studied by Escobar, who shows in \cite{E92} that (almost) every compact Riemannian manifold $(M, g)$ is conformally equivalent to one of constant scalar curvature, whose boundary is minimal. The problem reduces to solving the semilinear elliptic critical Sobolev exponent equation with the Neumann boundary condition. It is naturally, the Neumann boundary value problem for Hessian type equations also appears in the fully nonlinear Yamabe problem for manifolds with boundary, which is to find a conformal metric  such that the $k$-th elementary symmetric function of eigenvalues of Schouten tensor is constant and with the constant mean curvature on the boundary of manifold. See Jin-Li-Li \cite{JLL07}, Chen \cite{C07} and Li-Luc \cite{LL14}, but in all these papers they need to impose the manifold are umbilic or total geodesic boundary for $k \geq 2$, which are more like the condition in Trudinger \cite{T87} that the domain is ball.

In 1986, Lions-Trudinger-Urbas solved the Neumann problem of Monge-Amp\`{e}re equations in the celebrated paper \cite{LTU86}. For related results on the Neumann or oblique derivative problem for some class of fully nonlinear elliptic equations can be found in Urbas \cite{U95}. Recently, the second author and G.H. Qiu \cite{MQ15} solved the the Neumann problem of $k$-Hessian equations, and then Chen-Zhang \cite{CZ16} generalized the above result to the the Neumann problem of Hessian quotient equations. Meanwhile, Jiang-Trudinger \cite{JT15, JT16} studied the general oblique boundary value problems for augmented Hessian equations with some regular condition and concavity condition. Motivated by the optimal transport Caffarelli \cite{Ca96} and Urbas \cite{Urbas97} proved the existence of the Monge-Ampere equation with second boundary value problem, for the general convex cost function this second boundary value problem studied by Ma-Trudinger-Wang \cite{MTW}.

If $k =n, l=0$, \eqref{1.1} is the well known parabolic Monge-Amp\`{e}re equation, which relates to the Gauss curvature flow if $f=f(x,u,Du)$.
 O.C. Schn\"{u}rer-K. Smoczyk proved  the long time existence of this Gauss curvature flow and showed that the flow converges to a solution of the prescribed Gauss curvature equation in \cite{SS03}.

Naturally, we want to know how about the Neumann problem of parabolic Hessian quotient equations. In this paper, we obtain two results. One is the long time existence and convergence of solutions of the Neumann problem of parabolic Hessian quotient equation. The other is that the solutions of the classical Neumann problem of parabolic Hessian quotient equation converge to the translating solution.

To state our main results, we first introduce the structural conditions on $\varphi$, $f$ and $u_0$. Firstly, we assume
 \begin{align}\label{1.2}
 \varphi_u:=\frac{\partial\varphi}{\partial u}\le c_{\varphi}<0.
 \end{align}
 and
 \begin{align}\label{1.3}
 f>0 \quad\text{and} \quad f_u\ge0.
 \end{align}
 These two conditions are similar as the Monge-Amp\`{e}re case in \cite{SS04}. Here $u_0$ is always a smooth, $k$-convex function. Moreover, we will always assume either
 \begin{align}\label{1.4}
 \frac{f_u}{f}\ge c_{f}>0,
 \end{align}
 or
 \begin{align}\label{1.5}
\frac{\sigma _k (D^2 u_0)}{\sigma _l (D^2 u_0)}\ge f(x,u_0).
 \end{align}
 We also assume the following compatibility conditions
 \begin{align}\label{1.6}
 (\frac{\partial}{\partial t})^j|_{t=0}(u_{\nu}-\varphi(x,u))=0,\quad \text{for any }\quad j\ge0, \quad\text{on}\quad\partial\Omega.
 \end{align}

 Our first main theorem is
\begin{theorem} \label{th1.1}
Assume that $\Omega $ is a strictly convex bounded  domain in $\mathbb{R}^n$, $n \ge 2$, with smooth boundary.  Let $f, \varphi:\overline\Omega\times\mathbb{R}\rightarrow \mathbb{R}$, be smooth functions which satisfy  $\eqref{1.2}$ and $\eqref{1.3}$. Suppose there is  a smooth, $k$-convex function $u_0$ satisfying the compatibility conditions \eqref{1.6}. We further assume that either $\eqref{1.4}$ or $\eqref{1.5}$ holds. Then there exists a smooth solution $u(x,t)$ of equation \eqref{1.1} for all $t\ge 0$. Moreover, $u(x,t)$ converges smoothly to a smooth function $u^\infty$ which is a solution of  the Neumann  problem for Hessian quotient equation
\begin{align} \label{1.7}
\left\{ \begin{array}{l}
\frac{\sigma _k (D^2 u^\infty)}{\sigma _l (D^2 u^\infty)}  = f(x,u^\infty),  \quad \text{in} \quad \Omega \subset \mathbb{R}^n,\\
u^\infty_\nu(x) = \varphi(x,u^\infty),\qquad \text{on} \quad \partial \Omega,
\end{array} \right.
\end{align}
where $\nu$ is outer unit normal vector of $\partial \Omega$. The rate of convergence is exponential provided $\eqref{1.4}$ holds.
\end{theorem}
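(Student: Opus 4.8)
The plan is to prove Theorem \ref{th1.1} by the parabolic continuation method: derive a priori estimates for $u$ in every $C^m(\overline\Omega\times[0,T))$ that are independent of the maximal time $T$, together with the fact that $\lambda(D^2u)$ stays in a fixed compact subset of $\Gamma_k$; this forces $T=\infty$, and a decay/monotonicity analysis of $u_t$ then yields smooth convergence to a solution of \eqref{1.7}. The operator $F(D^2u)=\log\sigma_k(D^2u)-\log\sigma_l(D^2u)$ is elliptic on $\Gamma_k$ and concave there, because $\left(\sigma_k/\sigma_l\right)^{1/(k-l)}$ is concave on $\Gamma_k$ and hence so is its logarithm $\tfrac1{k-l}F$; the boundary operator $u_\nu-\varphi(x,u)$ is oblique since $\langle\nu,\nu\rangle=1>0$. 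As $u_0$ is smooth and $k$-convex and the compatibility conditions \eqref{1.6} hold, linear parabolic theory for oblique problems together with the implicit function theorem give a unique smooth $k$-admissible solution on a maximal interval $[0,T)$, so it suffices to bound all derivatives of $u$ uniformly in $T$ and to keep $\lambda(D^2u)$ away from $\partial\Gamma_k$.

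First I would treat $u_t$ and $u$ themselves. Differentiating \eqref{1.1} in $t$, the function $v=u_t$ satisfies the linear oblique parabolic problem $v_t=F^{ij}v_{ij}-\tfrac{f_u}{f}v$ in $\Omega$, $v_\nu=\varphi_u v$ on $\partial\Omega$, with $F^{ij}=\partial F/\partial u_{ij}>0$. Since $f_u/f\ge0$ by \eqref{1.3} and $\varphi_u\le c_\varphi<0$ by \eqref{1.2}, the maximum principle gives $\sup_{\overline\Omega\times[0,t]}|u_t|\le\sup_{\overline\Omega}|u_t(\cdot,0)|=:C_0$, which is finite by \eqref{1.1} at $t=0$. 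If \eqref{1.4} holds, comparison with $C_0e^{-c_ft}$ gives $|u_t|\le C_0e^{-c_ft}$; if \eqref{1.5} holds, then $u_t(\cdot,0)\ge0$ and the same maximum principle propagates $u_t\ge0$ for all $t$. For the $C^0$ bound, at an interior space--time maximum of $u$ with $t>0$ one has $D^2u\le0$, contradicting $\sigma_1(D^2u)>0$; hence $\max u$ is attained at $t=0$ or on $\partial\Omega$, where $u_\nu\ge0$ forces $\varphi(x,u)\ge0$ and hence $u\le C_1$ by $\varphi_u<0$. Under \eqref{1.4} the decay of $u_t$ already gives $|u-u_0|\le C_0/c_f$; under \eqref{1.5}, $u$ is nondecreasing so $\min u_0\le u\le\max\{\sup u_0,C_1\}$. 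In all cases $\|u(\cdot,t)\|_{C^0(\overline\Omega)}\le C$ uniformly in $t$.

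Next come the gradient and second-order estimates, which I expect to be the main obstacle. With $|u_t|\le C$, the equation reads $\sigma_k(D^2u)/\sigma_l(D^2u)=e^{u_t}f(x,u)$, with the right side between two positive constants, so one can follow the elliptic Neumann estimates of Lions--Trudinger--Urbas \cite{LTU86}, Ma--Qiu \cite{MQ15} and Chen--Zhang \cite{CZ16}, the term $u_t$ being lower order and bounded. The strict convexity of $\Omega$ and the obliqueness of $\partial_\nu$ give $\max_{\partial\Omega\times[0,T)}|Du|\le C$, and a maximum-principle argument applied to $\log|Du|^2$ plus a term in the distance to $\partial\Omega$ and in $u$ gives the interior bound, so $\sup|Du|\le C$. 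For the second-order bound one first controls $|D^2u|$ on $\partial\Omega\times[0,T)$: the tangential--tangential and mixed tangential--normal components from differentiating $u_\nu=\varphi(x,u)$ along $\partial\Omega$ and using convexity, and the delicate double-normal derivative $u_{\nu\nu}$ via a barrier built from the equation, the strict convexity of $\Omega$ and the lower-order bounds, as in \cite{MQ15,CZ16}; then the global bound follows from the parabolic maximum principle applied to a test quantity of the form $\log\lambda_{\max}(D^2u)$ plus a function of $|Du|^2$ and $u$, using the concavity of $\log(\sigma_k/\sigma_l)$ on $\Gamma_k$. This bounds $\lambda_{\max}$, and since $\sigma_k/\sigma_l$ is bounded below away from $0$ while $\sigma_1\le C$, the Newton--Maclaurin inequalities keep $\lambda(D^2u)$ in a fixed compact subset of $\Gamma_k$, so \eqref{1.1} is uniformly parabolic; the hard part is exactly the double-normal boundary estimate and this global $C^2$ bound, as in the elliptic theory.

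Finally, uniform parabolicity together with the concavity of $F$ let the Evans--Krylov theorem and its boundary version for oblique parabolic problems (see Lieberman \cite{L13}) yield a uniform $C^{2,\alpha}(\overline\Omega\times[0,T))$ bound; Schauder bootstrapping then gives uniform $C^m$ bounds for every $m$, all independent of $T$, so $T=\infty$ and $u$ is smooth for all time. For the convergence, under \eqref{1.4} the decay $|u_t|\le C_0e^{-c_ft}$ makes $u(\cdot,t)$ Cauchy in $C^0$ as $t\to\infty$; interpolating with the uniform higher-order bounds gives $u(\cdot,t)\to u^\infty$ in every $C^m$ at an exponential rate, and passing to the limit in \eqref{1.1}, where $u_t\to0$, shows $u^\infty$ solves \eqref{1.7}. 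Under \eqref{1.5}, $u(\cdot,t)$ is nondecreasing and bounded, hence converges, and since $\int_0^\infty\!\int_\Omega u_t\,dx\,dt=\lim_{t\to\infty}\int_\Omega u(\cdot,t)\,dx-\int_\Omega u_0\,dx<\infty$ there is $t_j\to\infty$ with $\|u_t(\cdot,t_j)\|_{L^1(\Omega)}\to0$; by the uniform estimates and compactness $u(\cdot,t_j)\to u^\infty$ in $C^\infty$ with $u^\infty$ solving \eqref{1.7}, and the comparison principle for \eqref{1.7}, valid because $\varphi_u<0$, gives uniqueness of $u^\infty$, hence convergence of the whole family $u(\cdot,t)$ to $u^\infty$ in $C^\infty$. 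This establishes Theorem \ref{th1.1}.
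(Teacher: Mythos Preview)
Your proposal is correct and follows essentially the same route as the paper: $u_t$-bounds via the maximum principle (Lemma~\ref{lem3.1}), $C^0$ from subharmonicity and the boundary condition (Theorem~\ref{th4.1}), $C^1$ and $C^2$ by adapting the elliptic Neumann estimates of \cite{LTU86,MQ15,CZ16} to the parabolic setting (Sections~5--6), then Evans--Krylov/Lieberman--Trudinger for $C^{2,\alpha}$ and Schauder for higher order, with convergence from the decay or monotonicity of $u_t$. The only organizational difference is that the paper, following \cite{LTU86,MQ15}, does not first bound $|D^2u|$ on $\partial\Omega$ and then globally, but instead reduces the \emph{global} second-order bound directly to $\sup_{\partial\Omega}|u_{\nu\nu}|$ in one step (Lemma~\ref{lem6.2}) via the auxiliary function $v(x,t,\xi)=u_{\xi\xi}-v'(x,t,\xi)+|Du|^2+\tfrac{K_1}{2}|x|^2$ on $\overline\Omega\times[0,T)\times\mathbb{S}^{n-1}$; the tangential second derivatives on the boundary are controlled inside that argument (see \eqref{6.12}--\eqref{6.14}) rather than by a separate boundary computation.
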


Next we consider the related translating solution of the classical Neumann problem for parabolic Hessian quotient equations. The Monge-Amp\`{e}re equation case was proven by \cite{SS04}, and the mean curvature equation by \cite{MWW18}.

Let $u_0$ be a smooth $k$-convex function.  Assume that $u_0\in C^{\infty}(\overline\Omega)$ and satisfies
\begin{align} \label{1.8}
\frac{\partial u_0(x)}{\partial\nu}=\varphi(x) \quad\text{on} \quad\partial\Omega.
\end{align}

\begin{theorem}\label{th1.2}
Let $\Omega$ is a strictly convex bounded domain in $\mathbb{R}^n$ with smooth boundary. Assume that $u_0$ and $\varphi$ are smooth functions satisfying \eqref{1.8}, and $f$ is a positive smooth function, $f\in C^{\infty}(\overline\Omega)$.
Then there exists a smooth $k$-admissible solution $u(x,t)$ of the following equation for all $t\ge 0$.
\begin{align} \label{1.9}
\left\{ \begin{array}{l}
u_t=\log\frac{\sigma _k \left( {D^2 u} \right)}{\sigma _l \left( {D^2 u} \right)} -\log f\left( x \right),  \quad (x, t) \in \Omega \times (0, T), \\
u_{\nu}(x,t) = \varphi (x), \qquad \qquad \qquad\quad x \in \partial \Omega,~ t \in [0, T), \\
u(x,0)=u_0(x), \qquad\qquad \qquad \quad x \in  \Omega,
\end{array} \right.
\end{align}
where $u(\cdot,t)$ approaches $u_0$ in $C^2(\overline\Omega)$ as $t\rightarrow 0$. Moreover, $u(\cdot,t)$ converges smoothly to a translating solution, i.e. to a solution with constant time derivative.
\end{theorem}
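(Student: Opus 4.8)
The strategy is to derive Theorem \ref{th1.2} from Theorem \ref{th1.1} by a vanishing-perturbation argument, supplemented by a priori estimates uniform in the perturbation parameter and in time, and then to analyse the long-time behaviour of the resulting flow; this follows the template of the Monge-Amp\`ere case \cite{SS04} and the mean curvature case \cite{MWW18}. First I would introduce, for small $\epsilon>0$, the perturbed data $f_\epsilon(x,u)=f(x)e^{\epsilon(u-u_0(x))}$ and $\varphi_\epsilon(x,u)=\varphi(x)-\epsilon(u-u_0(x))$. Then $\varphi_{\epsilon,u}=-\epsilon<0$, $f_\epsilon>0$, $f_{\epsilon,u}=\epsilon f_\epsilon>0$ and $f_{\epsilon,u}/f_\epsilon\equiv\epsilon>0$, so \eqref{1.2}, \eqref{1.3} and \eqref{1.4} hold; moreover $u_{0,\nu}-\varphi_\epsilon(x,u_0)=\varphi(x)-\varphi(x)=0$ by \eqref{1.8}, which is the zeroth-order instance of \eqref{1.6}, and after replacing $u_0$ by a smooth $k$-convex $u_0^\epsilon\to u_0$ in $C^2(\overline\Omega)$ compatible to all orders for $(f_\epsilon,\varphi_\epsilon)$ (a routine modification supported near $\partial\Omega$) the full condition \eqref{1.6} holds. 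Theorem \ref{th1.1} then produces a global smooth $k$-admissible solution $u^\epsilon(x,t)$ of the perturbed problem.

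The core of the argument is a priori estimates for $u^\epsilon$ independent of $\epsilon$ and $t$. Since $\varphi_\epsilon,f_\epsilon$ depend on $u$ only through the $O(\epsilon)$ term, one neither expects nor needs a bound on $u^\epsilon$ itself; I would normalise, say by subtracting $u^\epsilon(x_0,t)$ at a fixed $x_0\in\Omega$. Differentiating the perturbed equation in $t$ shows $v^\epsilon:=u^\epsilon_t$ satisfies $v^\epsilon_t=a^{ij}v^\epsilon_{ij}-\epsilon v^\epsilon$ with $a^{ij}=\partial\bigl(\log(\sigma_k/\sigma_l)\bigr)/\partial u_{ij}>0$ on $\Gamma_k$ (parabolicity of the quotient operator), together with the boundary condition $v^\epsilon_\nu=-\epsilon v^\epsilon$ on $\partial\Omega$; the maximum principle (with Hopf's lemma on $\partial\Omega$) then gives $\|u^\epsilon_t\|_{L^\infty(\Omega\times[0,\infty))}\le\|u^\epsilon_t(\cdot,0)\|_{L^\infty(\Omega)}$, which is controlled by $\|u_0\|_{C^2}$, $\min_{\overline\Omega}f$ and $\max_{\overline\Omega}f$, uniformly in $\epsilon$. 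Hence $\sigma_k(D^2u^\epsilon)/\sigma_l(D^2u^\epsilon)$ is pinched between two positive constants. The spatial gradient estimate (interior from the equation, boundary from the strict convexity of $\Omega$ and a barrier), the interior second-derivative estimate (via the concavity of $\log(\sigma_k/\sigma_l)$, equivalently of $(\sigma_k/\sigma_l)^{1/(k-l)}$, on $\Gamma_k$) and the boundary second-derivative estimates (double-normal, mixed and tangential) are then carried out as in the elliptic Hessian-quotient Neumann problem \cite{CZ16} (cf.\ \cite{MQ15}), again uniformly in $\epsilon$ and $t$; together they confine $\lambda(D^2u^\epsilon)$ to a fixed compact subset of $\Gamma_k$, so the flow is uniformly parabolic, and Evans--Krylov plus Schauder theory give uniform $C^{m,\alpha}$ bounds for every $m$. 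Letting $\epsilon\to0$ along a subsequence, the normalised $u^\epsilon$ converge in $C^\infty_{\mathrm{loc}}(\overline\Omega\times[0,\infty))$ to a smooth $k$-admissible solution $u$ of \eqref{1.9}, with $u(\cdot,t)\to u_0$ in $C^2(\overline\Omega)$ as $t\to0$ since $u_0^\epsilon\to u_0$ in $C^2$; this is the asserted global existence.

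It remains to show $u$ converges to a translating solution. For the unperturbed problem $\varphi$ and $f$ do not depend on $u$ or $t$, so $v:=u_t$ solves the homogeneous linear parabolic equation $v_t=a^{ij}v_{ij}$ with the homogeneous Neumann condition $v_\nu=0$; hence $M(t):=\max_{\overline\Omega}v(\cdot,t)$ is non-increasing, $m(t):=\min_{\overline\Omega}v(\cdot,t)$ is non-decreasing, and both converge as $t\to\infty$. Applying a Harnack inequality to $v-m(t)\ge0$ (equivalently, the oscillation-decay argument of \cite{SS03, SS04}), using the uniform parabolicity and the uniform $C^{m,\alpha}$ bounds, yields $M(t)-m(t)\to0$, so $u_t\to c$ uniformly for a constant $c$. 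Then $w:=u-ct$ satisfies $w_t=u_t-c\to0$, and by the uniform higher-order estimates $w(\cdot,t)\to v_\infty$ in $C^\infty(\overline\Omega)$, where $v_\infty$ solves $\log\bigl(\sigma_k(D^2v_\infty)/\sigma_l(D^2v_\infty)\bigr)-\log f=c$ in $\Omega$ and $(v_\infty)_\nu=\varphi$ on $\partial\Omega$; equivalently $u(x,t)-\bigl(v_\infty(x)+ct\bigr)\to0$ in $C^\infty(\overline\Omega)$, i.e.\ $u$ converges to the translating solution $v_\infty+ct$.

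The main obstacle throughout is the uniform-in-$\epsilon$ second-order boundary estimate, in particular the double-normal derivative bound on $\partial\Omega$, where the strict convexity of $\Omega$ and a delicate barrier construction are essential; a secondary subtlety is the oscillation-decay (Harnack) step, which is precisely what upgrades the monotone bounds on $\max_{\overline\Omega}u_t$ and $\min_{\overline\Omega}u_t$ to convergence to a single constant $c$, and hence yields a genuine translating solution rather than merely uniform-in-time bounds.
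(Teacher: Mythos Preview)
Your route is genuinely different from the paper's. The paper does \emph{not} perturb and invoke Theorem~\ref{th1.1}; instead it first solves the \emph{elliptic} problem (Theorem~8.1): it finds a pair $(s_\infty,u_{ell}^\infty)$ with $\sigma_k(D^2u_{ell}^\infty)/\sigma_l(D^2u_{ell}^\infty)=f e^{s_\infty}$ and $(u_{ell}^\infty)_\nu=\varphi$, via an $\varepsilon$-approximation $\sigma_k/\sigma_l=f e^{s+\varepsilon u}$ of the elliptic equation. The resulting translating solution $u^\infty(x,t)=u_{ell}^\infty(x)+s_\infty t$ is then used as a two-sided \emph{barrier} for the parabolic flow, giving $|u(x,t)-s_\infty t|\le C$ directly by the comparison principle. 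The parabolic a~priori estimates for \eqref{1.9} are redone in Section~8.2 (for $\varphi=\varphi(x)$, $f=f(x)$) with auxiliary functions chosen so that \emph{no} $C^0$ bound on $u$ enters; this is what replaces your ``uniform in $\epsilon$ and $t$'' claim. Finally, convergence is obtained not by Harnack on $u_t$ but by showing that $w=u-u^\infty$ has strictly decreasing oscillation, passing to a subsequential limit $u^*$ of time-shifts, and using the strong maximum principle to force $\mathrm{osc}(u^*-u^\infty)\equiv 0$.

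Your scheme can be made to work, but two points need tightening. First, you cannot simply cite \cite{CZ16,MQ15} or the proofs of Theorems~\ref{th5.1} and \ref{th6.1} for the uniform-in-$\epsilon$-and-$t$ gradient and Hessian bounds: those proofs use auxiliary functions (e.g.\ $\psi(u)=(3M_0-u)^{-1/2}$, $h(u)=-\log(1+M_0-u)$) that depend on $|u|_{C^0}$, which for $u^\epsilon$ is of order $1/\epsilon$. You must instead produce $C^0$-free estimates as in Section~8.2 of the paper (using $G=\log|Dw|^2+ah$ with $w=u-\varphi h$, etc.), tracking that the $O(\epsilon)$ $u$-dependence of $f_\epsilon,\varphi_\epsilon$ contributes only $O(\epsilon u^\epsilon)=O(1)$ terms; this is the real technical content, and once you have it the $\epsilon$-detour through Theorem~\ref{th1.1} is no longer needed. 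Second, in your convergence step, ``$u_t\to c$ uniformly'' alone does not give $u-ct\to v_\infty$: you need integrability of $u_t-c$ in $t$, which follows from the \emph{exponential} oscillation decay that the Harnack iteration actually yields; you should state and use that rate explicitly. The paper's barrier-plus-compactness route buys you the $C^0$ control and the limit identification in one stroke, at the price of first solving the elliptic Neumann problem; your route avoids that elliptic step but must then recover both the $C^0$-free estimates and a quantitative decay for $u_t$.
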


The rest of the paper is organized as follows. In Section 2, we collect some  properties and inequalities
of elementary symmetric functions. And we prove the uniform estimate for $|u_t|$ in Section 3. Then we use the uniform estimate of $|u_t|$ to obtain $C^0$-estimate of $u$ in Section 4. The $C^1$-estimate and the $C^2$ estimate are derived in Section 5 and Section 6, respectively. And then we prove Theorem \ref{th1.1} in Section 7. At last, we prove Theorem \ref{th1.2} in Section 8.

\section{preliminary}

In this section, we collect some  properties and inequalities of elementary symmetric functions.

\subsection{Basic properties of elementary symmetric functions}

Let $\lambda=(\lambda_1,\dots,\lambda_n)\in\mathbb{R}^n$ and $1\le k
\le n$.
\begin{equation*}
\sigma_k(\lambda) = \sum _{1 \le i_1 < i_2 <\cdots<i_k\leq n}\lambda_{i_1}\lambda_{i_2}\cdots\lambda_{i_k},
\end{equation*}
We denote by $\sigma _k (\lambda \left| i \right.)$ the symmetric
function with $\lambda_i = 0$ and $\sigma _k (\lambda \left| ij
\right.)$ the symmetric function with $\lambda_i =\lambda_j = 0$.
It is easy to know the following equalities hold
\begin{align*}
&\sigma_k(\lambda)=\sigma_k(\lambda|i)+\lambda_i\sigma_{k-1}(\lambda|i), \quad \forall \,1\leq i\leq n,\\
&\sum_i \lambda_i\sigma_{k-1}(\lambda|i)=k\sigma_{k}(\lambda),\\
&\sum_i\sigma_{k}(\lambda|i)=(n-k)\sigma_{k}(\lambda).
\end{align*}
We also denote by $\sigma _k (W \left|i \right.)$ the symmetric function with $W$ deleting the $i$-row and
$i$-column and $\sigma _k (W \left| ij \right.)$ the symmetric
function with $W$ deleting the $i,j$-rows and $i,j$-columns. Then
we have the following identities.
\begin{proposition}\label{prop2.1}
Suppose $W=(W_{ij})$ is diagonal, and $m$ is a positive integer, then
\begin{align*}
\frac{{\partial \sigma _m (W)}} {{\partial W_{ij} }} = \begin{cases}
\sigma _{m - 1} (W\left| i \right.), &\text{if } i = j, \\
0, &\text{if } i \ne j.
\end{cases}
\end{align*}
\end{proposition}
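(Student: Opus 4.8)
The plan is to express $\sigma_m$ as a genuine polynomial in the matrix entries and then differentiate termwise. The natural starting point is the classical principal-minor expansion: writing $W[I]$ for the principal submatrix of $W$ on an index set $I\subseteq\{1,\dots,n\}$, one has
\[
\sigma_m(W)=\sum_{|I|=m}\det\bigl(W[I]\bigr),
\]
which follows by comparing coefficients of $t^{n-m}$ in $\det(tI+W)=\prod_i(t+\lambda_i)=\sum_{m}\sigma_m(\lambda(W))\,t^{n-m}$ after expanding the determinant on the left by the Leibniz rule. This is a polynomial identity in the entries $W_{pq}$, hence valid in a neighbourhood of any diagonal matrix, which is exactly what makes the partial derivatives $\partial\sigma_m/\partial W_{ij}$ meaningful.

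Next I would differentiate termwise. A minor $\det(W[I])$ involves the variable $W_{ij}$ only when $\{i,j\}\subseteq I$, and for such $I$ the derivative $\partial\det(W[I])/\partial W_{ij}$ is the signed $(i,j)$-cofactor of $W[I]$, i.e.\ $\pm$ the determinant of $W[I]$ with row $i$ and column $j$ deleted. When $i\ne j$ and $W$ is diagonal, that reduced matrix still contains the row indexed by $j$, whose only possibly nonzero entry lay in the now-deleted column $j$; hence this row is identically zero, the cofactor vanishes, and summing over $I$ gives $\partial\sigma_m/\partial W_{ij}=0$. When $i=j$, deleting row $i$ and column $i$ from the diagonal matrix $W[I]$ leaves the diagonal matrix $W[I\setminus\{i\}]$, with determinant $\prod_{p\in I\setminus\{i\}}W_{pp}$; summing over all $I$ of size $m$ containing $i$ reproduces exactly $\sigma_{m-1}(\lambda\,|\,i)=\sigma_{m-1}(W\,|\,i)$.

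An alternative route I might take instead uses the first-order eigenvalue perturbation formula: at a diagonal matrix one has $\partial\lambda_p/\partial W_{ij}=\delta_{ip}\delta_{jp}$, so the chain rule together with $\partial\sigma_m/\partial\lambda_p=\sigma_{m-1}(\lambda\,|\,p)$ yields $\partial\sigma_m/\partial W_{ij}=\delta_{ij}\sigma_{m-1}(\lambda\,|\,i)$ at once, with a density argument handling matrices with repeated eigenvalues. I do not expect any genuine obstacle; the only points needing a line of care are the polynomial-identity step in the first approach (or the repeated-eigenvalue density argument in the second) and the bookkeeping of cofactor signs, both of which are routine.
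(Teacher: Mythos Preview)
Your proposal is correct. The paper itself does not prove Proposition~2.1 at all: it is stated in the preliminaries as a standard identity, with no argument given. Your first approach via the principal-minor expansion $\sigma_m(W)=\sum_{|I|=m}\det(W[I])$ and termwise differentiation is clean and self-contained; the cofactor reasoning for the off-diagonal vanishing is exactly right. Your alternative via first-order eigenvalue perturbation is also valid, and your caveat about repeated eigenvalues is appropriate (the density argument suffices because $\sigma_m$ is polynomial in the entries). Either route is more than the paper provides, so there is nothing to compare against; just be aware that in context this is treated as background rather than something requiring proof.
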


Recall that the G{\aa}rding's cone is defined as
\begin{equation}\label{2.1}
\Gamma_k  = \{ \lambda  \in \mathbb{R}^n :\sigma _i (\lambda ) >
0,\forall 1 \le i \le k\}.
\end{equation}

\begin{proposition}\label{prop2.2}
Let $\lambda \in \Gamma_k$ and $k \in \{1,2, \cdots, n\}$. Suppose that
$$
\lambda_1 \geq \cdots \geq \lambda_k \geq \cdots \geq \lambda_n,
$$
then we have
\begin{align}
\label{2.2}& \sigma_{k-1} (\lambda|n) \geq \sigma_{k-1} (\lambda|n-1) \geq \cdots \geq \sigma_{k-1} (\lambda|k) \geq \cdots \geq \sigma_{k-1} (\lambda|1) >0; \\
\label{2.3}& \lambda_1 \geq \cdots \geq \lambda_k >  0, \quad \sigma _k (\lambda)\leq C_n^k  \lambda_1 \cdots \lambda_k; \\
\label{2.4}& \lambda _1 \sigma _{k - 1} (\lambda |1) \geq \frac{k} {{n}}\sigma _k (\lambda).
\end{align}
where $C_n^k = \frac{n!}{k! (n-k)!}$.
\end{proposition}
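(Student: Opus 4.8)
The plan is to establish \eqref{2.2} first and then to deduce \eqref{2.3} and \eqref{2.4} from it together with the three symmetric–function identities recorded just before Proposition \ref{prop2.1}. The one genuinely non‑trivial ingredient is the positivity hidden inside \eqref{2.2}, namely the classical fact that $\sigma_{m-1}(\lambda|i)>0$ whenever $\lambda\in\Gamma_m$ (equivalently, by Proposition \ref{prop2.1}, that every first derivative $\partial\sigma_m/\partial\lambda_i$ is strictly positive on $\Gamma_m$). I would prove this ``key positivity'' by induction on $m$: it is immediate for $m=1$ since $\sigma_0\equiv1$, and for the step one uses $\Gamma_m\subset\Gamma_{m-1}\subset\cdots\subset\Gamma_1$ — so that $\sigma_p(\lambda|i)>0$ for all $p\le m-2$ by the inductive hypothesis, whence $(\lambda|i)\in\Gamma_{m-1}$ — and then examines the affine function $t\mapsto\sigma_m(\lambda_1,\dots,\lambda_{i-1},t,\lambda_{i+1},\dots,\lambda_n)=\sigma_m(\lambda|i)+t\,\sigma_{m-1}(\lambda|i)$, all of whose lower–order companions $\sigma_p(\lambda|i)+t\,\sigma_{p-1}(\lambda|i)$ with $p\le m-1$ have strictly positive slope; combined with the convexity of $\overline{\Gamma_m}$ this rules out $\sigma_{m-1}(\lambda|i)\le0$. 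This is standard and used repeatedly in the literature (e.g. in \cite{MQ15, CZ16}), so I would simply cite it; it is also the step I expect to be the main technical point.

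Granting the key positivity, \eqref{2.2} follows from a telescoping identity. Fix $i<j$, so $\lambda_i\ge\lambda_j$. Applying $\sigma_r(\mu)=\sigma_r(\mu|\ell)+\mu_\ell\sigma_{r-1}(\mu|\ell)$ once with $\mu=\lambda|i,\ \ell=j$ and once with $\mu=\lambda|j,\ \ell=i$, and subtracting, gives
\[
\sigma_{k-1}(\lambda|i)-\sigma_{k-1}(\lambda|j)=(\lambda_j-\lambda_i)\,\sigma_{k-2}(\lambda|ij).
\]
Deleting one coordinate from a vector of $\Gamma_k$ produces a vector of $\Gamma_{k-1}$ (apply the key positivity at each level $\le k-1$), hence deleting two coordinates produces a vector of $\Gamma_{k-2}$, so $\sigma_{k-2}(\lambda|ij)>0$; the right–hand side is therefore $\le0$, which yields $\sigma_{k-1}(\lambda|n)\ge\sigma_{k-1}(\lambda|n-1)\ge\cdots\ge\sigma_{k-1}(\lambda|1)$, and $\sigma_{k-1}(\lambda|1)>0$ is the key positivity with $i=1$.

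For \eqref{2.3} I would run a descent on $n-k$. If $n=k$, $\Gamma_k$ is the positive cone and there is nothing to prove; if $n>k$ and $\lambda_n>0$, all entries are positive and again nothing is to prove; otherwise $\lambda_n\le0$, and then $\sigma_p(\lambda|n)=\sigma_p(\lambda)-\lambda_n\sigma_{p-1}(\lambda|n)\ge\sigma_p(\lambda)>0$ for $p\le k$ (using the key positivity), so $(\lambda|n)\in\Gamma_k$ and its top $k$ entries $\lambda_1,\dots,\lambda_k$ are positive by the inductive hypothesis. For the inequality $\sigma_k(\lambda)\le C_n^k\lambda_1\cdots\lambda_k$ I would induct on $n$ via $\sigma_k(\lambda)=\sigma_k(\lambda|n)+\lambda_n\sigma_{k-1}(\lambda|n)$: the case $n=k$ is the equality $\sigma_k=\lambda_1\cdots\lambda_k$; if $\lambda_n\le0$ then $\sigma_k(\lambda)\le\sigma_k(\lambda|n)\le C_{n-1}^k\lambda_1\cdots\lambda_k\le C_n^k\lambda_1\cdots\lambda_k$; and if $\lambda_n>0$, applying the inductive hypothesis at orders $k$ and $k-1$ to $(\lambda|n)\in\Gamma_k$ and using $0<\lambda_n\le\lambda_k$ gives $\sigma_k(\lambda)\le(C_{n-1}^k+C_{n-1}^{k-1})\lambda_1\cdots\lambda_k=C_n^k\lambda_1\cdots\lambda_k$ by Pascal's rule.

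Finally, for \eqref{2.4}, write $\sigma_k(\lambda)=\lambda_1\sigma_{k-1}(\lambda|1)+\sigma_k(\lambda|1)$; then $\lambda_1\sigma_{k-1}(\lambda|1)\ge\frac{k}{n}\sigma_k(\lambda)$ is equivalent to $(n-k)\lambda_1\sigma_{k-1}(\lambda|1)\ge k\,\sigma_k(\lambda|1)$. If $\sigma_k(\lambda|1)\le0$ this is trivial, since $\lambda_1>0$ and $\sigma_{k-1}(\lambda|1)>0$ by \eqref{2.2}. If $\sigma_k(\lambda|1)>0$, then $(\lambda|1)\in\Gamma_k$ (its symmetric functions of orders $\le k-1$ are positive by the key positivity, and that of order $k$ is positive by assumption); hence $\sigma_{k-1}\big((\lambda|1)\,|\,j\big)>0$ for every $j$, and since every entry of $(\lambda|1)$ is $\le\lambda_1$, the identities $\sum_j(\lambda|1)_j\,\sigma_{k-1}\big((\lambda|1)|j\big)=k\,\sigma_k(\lambda|1)$ and $\sum_j\sigma_{k-1}\big((\lambda|1)|j\big)=(n-k)\,\sigma_{k-1}(\lambda|1)$ give $k\,\sigma_k(\lambda|1)\le\lambda_1(n-k)\,\sigma_{k-1}(\lambda|1)$, as required. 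Thus the whole proposition reduces to the key positivity lemma, which is the only real obstacle; given it, \eqref{2.2}–\eqref{2.4} follow by the elementary manipulations above.
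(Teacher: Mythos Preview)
Your proposal is correct. The paper's own ``proof'' of Proposition~\ref{prop2.2} consists solely of citations---to \cite{L96, HS99} for \eqref{2.2}, \cite{L91} for \eqref{2.3}, and \cite{CW01, HMW10} for \eqref{2.4}---so your approach is genuinely more substantive: you reduce everything by elementary manipulations to the single ``key positivity'' fact $\sigma_{m-1}(\lambda|i)>0$ on $\Gamma_m$, which you then also propose to cite. Your derivations are the standard ones and are sound: \eqref{2.2} via the telescoping identity $\sigma_{k-1}(\lambda|i)-\sigma_{k-1}(\lambda|j)=(\lambda_j-\lambda_i)\,\sigma_{k-2}(\lambda|ij)$; \eqref{2.3} by induction on $n$ together with Pascal's rule $C_{n-1}^k+C_{n-1}^{k-1}=C_n^k$; and \eqref{2.4} by rewriting the inequality as $(n-k)\lambda_1\sigma_{k-1}(\lambda|1)\ge k\,\sigma_k(\lambda|1)$ and invoking the two summation identities recorded before Proposition~\ref{prop2.1}. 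One minor caveat: in your inductive sketch for the key positivity, the step ``$\sigma_p(\lambda|i)>0$ for all $p\le m-2$, whence $(\lambda|i)\in\Gamma_{m-1}$'' is circular as written (membership in $\Gamma_{m-1}$ also requires the case $p=m-1$, which is precisely what you are proving); but since you explicitly intend to cite this lemma---just as the paper cites the entire proposition---this is not a real gap in your plan.
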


\begin{proof}
All the properties are well known. For example, see \cite{L96} or \cite{HS99} for a proof of \eqref{2.2},
\cite{L91} for \eqref{2.3}, and \cite{CW01} or \cite{HMW10} for \eqref{2.4}.
\end{proof}

\begin{proposition}\label{prop2.3}
(Newton-MacLaurin inequality)
For $\lambda \in \Gamma_k$ and $k > l \geq 0$, $ r > s \geq 0$, $k \geq r$, $l \geq s$, we have
\begin{align} \label{2.5}
\Bigg[\frac{{\sigma _k (\lambda )}/{C_n^k }}{{\sigma _l (\lambda )}/{C_n^l }}\Bigg]^{\frac{1}{k-l}}
\le \Bigg[\frac{{\sigma _r (\lambda )}/{C_n^r }}{{\sigma _s (\lambda )}/{C_n^s }}\Bigg]^{\frac{1}{r-s}},
\end{align}
where $C_n^k = \frac{n!}{k! (n-k)!}$.
\end{proposition}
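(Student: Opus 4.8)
The plan is to reduce the inequality to the monotonicity of a running geometric mean of a single auxiliary sequence. Writing $p_m(\lambda)=\sigma_m(\lambda)/C_n^m$, the assertion becomes $(p_k/p_l)^{1/(k-l)}\le(p_r/p_s)^{1/(r-s)}$. First I would invoke the classical Newton inequality $p_{m-1}p_{m+1}\le p_m^{\,2}$, which holds for every real $\lambda\in\mathbb{R}^n$ and $1\le m\le n-1$; this is the only ingredient that is not pure bookkeeping, and it can either be quoted or proved by applying Rolle's theorem repeatedly to $\prod_i(1+\lambda_i t)$ and its derivatives (the binomial normalization makes three consecutive coefficients into the coefficients of a real-rooted quadratic, whose discriminant condition is exactly Newton's inequality). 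Since $\lambda\in\Gamma_k$ forces $p_m>0$ for all $0\le m\le k$, I can divide and recast Newton's inequality as $p_{m+1}/p_m\le p_m/p_{m-1}$ for $1\le m\le k-1$. Hence, setting $\rho_m:=p_m/p_{m-1}>0$ for $1\le m\le k$, the sequence $\rho_1\ge\rho_2\ge\cdots\ge\rho_k>0$ is non-increasing.

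Next I would telescope: for $0\le a<b\le k$ one has $p_b/p_a=\prod_{m=a+1}^{b}\rho_m$, so both sides of the target inequality are geometric means of consecutive blocks of the $\rho_m$,
\[
\Big[\frac{p_k}{p_l}\Big]^{\frac1{k-l}}=\Big(\prod_{m=l+1}^{k}\rho_m\Big)^{\frac1{k-l}},\qquad
\Big[\frac{p_r}{p_s}\Big]^{\frac1{r-s}}=\Big(\prod_{m=s+1}^{r}\rho_m\Big)^{\frac1{r-s}},
\]
taken over the index windows $\{l+1,\dots,k\}$ and $\{s+1,\dots,r\}$. The hypotheses $s\le l$ and $r\le k$ say exactly that every endpoint of the first window is at least the corresponding endpoint of the second, so it suffices to prove that the geometric mean of a consecutive block of a non-increasing positive sequence never increases when either endpoint of the block is moved to a larger index.

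After taking logarithms (with $x_m:=\log\rho_m$ non-increasing), this is the elementary fact that the running average $A(a,b):=\frac1{b-a}\sum_{m=a+1}^{b}x_m$ of a non-increasing sequence is non-increasing in $b$ for fixed $a$, and non-increasing in $a$ for fixed $b$: appending $x_{b+1}\le x_b\le A(a,b)$ on the right does not raise the average, and likewise deleting $x_{a+1}\ge A(a,b)$ from the left does not raise it. Applying this twice gives $A(l,k)\le A(s,k)\le A(s,r)$ (the middle term is legitimate since $s\le l<k$), and exponentiating yields the claim. I expect the main obstacle to be purely organizational --- keeping the index ranges consistent and making sure that every $p_m$ one divides by is positive, which is precisely where the hypothesis $\lambda\in\Gamma_k$ is used; the genuine analytic content is entirely absorbed into the classical Newton inequality.
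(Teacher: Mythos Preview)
Your argument is correct. The reduction to the normalized symmetric functions $p_m=\sigma_m/C_n^m$, the classical Newton inequality $p_{m-1}p_{m+1}\le p_m^2$, and the observation that $\lambda\in\Gamma_k$ makes $p_0,\dots,p_k>0$ so that $\rho_m=p_m/p_{m-1}$ is a positive non-increasing sequence, all go through exactly as you describe; the running-average monotonicity for a non-increasing sequence then finishes the job, and your check that the intermediate window $\{s+1,\dots,k\}$ stays inside $\{1,\dots,k\}$ is the only place one has to be slightly careful.

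The paper itself does not give a proof at all: its entire argument is the sentence ``See \cite{S05}'', i.e.\ a reference to Spruck's survey on fully nonlinear elliptic equations. What you have written is essentially the standard proof one finds when the citation is unpacked --- Newton's inequalities plus log-concavity of $m\mapsto p_m$ --- so there is no substantive difference in strategy, only in the level of detail supplied. Your version has the advantage of being self-contained and of making transparent exactly where the hypothesis $\lambda\in\Gamma_k$ (as opposed to merely $\lambda\in\mathbb{R}^n$) enters: it is needed only to ensure the positivity that allows one to take logarithms and divide, while Newton's inequality itself holds unconditionally.
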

\begin{proof}
See \cite{S05}.
\end{proof}

\subsection{Key Lemmas}

The following inequalities of Hessian  operators are very useful for us to establish a priori estimates. One can find the proofs in \cite{C15, CZ16}.

\begin{lemma} \label{lem2.4}
Suppose $\lambda = (\lambda_1, \lambda_2, \cdots, \lambda_n) \in \Gamma_k$, $k \geq 1$, and $ \lambda_1 <0$. Then we have
\begin{align} \label{2.6}
\sigma_{m} (\lambda |1)  \geq   \sigma_{m} (\lambda), \quad \forall \quad m =0, 1, \cdots, k.
\end{align}
Moreover, we have
\begin{align} \label{2.7}
\frac{{\partial [\frac{{\sigma _k (\lambda )}}{{\sigma _l (\lambda )}}]}}
{{\partial \lambda _1 }} \geqslant \frac{n}{k}\frac{{k - l}}{{n - l}}\frac{1}
{{n - k + 1}}\sum\limits_{i = 1}^n {\frac{{\partial [\frac{{\sigma _k (\lambda )}}
{{\sigma _l (\lambda )}}]}}{{\partial \lambda _i }}}, \quad \forall \quad 0 \leq l < k.
\end{align}
\end{lemma}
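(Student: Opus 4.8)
\textbf{Proof proposal for Lemma \ref{lem2.4}.}

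The plan is to treat the two inequalities separately, but both rely on an algebraic ordering trick that exploits the sign assumption $\lambda_1<0$. For \eqref{2.6}, the idea is to use the standard recursion $\sigma_m(\lambda)=\sigma_m(\lambda|1)+\lambda_1\sigma_{m-1}(\lambda|1)$ from the list of identities preceding Proposition \ref{prop2.1}. Since $\lambda_1<0$, it suffices to show $\sigma_{m-1}(\lambda|1)\ge0$ for all $m=1,\dots,k$; then $\sigma_m(\lambda)=\sigma_m(\lambda|1)+\lambda_1\sigma_{m-1}(\lambda|1)\le\sigma_m(\lambda|1)$. The nonnegativity of $\sigma_{m-1}(\lambda|1)$ for $m-1\le k-1$ follows because $\lambda\in\Gamma_k$ forces the $(n-1)$-tuple $\lambda|1:=(\lambda_2,\dots,\lambda_n)$ to lie in $\Gamma_{k-1}$ (a well-known cone-restriction fact; alternatively, order the coordinates so that $\lambda_1$ is the smallest — which it is, being the only possibly negative one if we are in $\Gamma_k$ with $k\ge1$, actually $\lambda_1$ negative and $\lambda\in\Gamma_k$ already pins down a lot — and invoke \eqref{2.2}-type monotonicity). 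So the first part is essentially a one-line consequence of the recursion plus the sign hypothesis.

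For \eqref{2.7}, write $Q(\lambda)=\frac{\sigma_k(\lambda)}{\sigma_l(\lambda)}$ and note $\frac{\partial Q}{\partial\lambda_i}=\frac{\sigma_{k-1}(\lambda|i)\sigma_l(\lambda)-\sigma_k(\lambda)\sigma_{l-1}(\lambda|i)}{\sigma_l(\lambda)^2}$. The claim is that the $i=1$ term dominates a fixed fraction of the full sum $\sum_{i=1}^n\frac{\partial Q}{\partial\lambda_i}$. The first step is to compute that sum: using $\sum_i\sigma_{k-1}(\lambda|i)=(n-k+1)\sigma_{k-1}(\lambda)$ and $\sum_i\lambda_i\sigma_{k-1}(\lambda|i)=k\sigma_k(\lambda)$ (hence $\sum_i\sigma_{k-1}(\lambda|i)$ relates cleanly to $\sigma_{k-1}$), one gets $\sum_i\frac{\partial Q}{\partial\lambda_i}=\frac{(n-k+1)\sigma_{k-1}\sigma_l-(n-l+1)\sigma_k\sigma_{l-1}}{\sigma_l^2}$. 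The second step is to lower-bound $\frac{\partial Q}{\partial\lambda_1}$. Since $\lambda_1<0$ and $\lambda\in\Gamma_k$, $\lambda_1$ is the smallest eigenvalue, so by the Newton–MacLaurin-type monotonicity \eqref{2.2} we have $\sigma_{k-1}(\lambda|1)\ge\frac1n\sum_i\sigma_{k-1}(\lambda|i)=\frac{n-k+1}{n}\sigma_{k-1}$, and similarly $\sigma_{l-1}(\lambda|1)\le\frac{n-l+1}{n}\sigma_{l-1}$ (this direction because we want an \emph{upper} bound on the subtracted term; need to check the inequality direction carefully against \eqref{2.2}, which gives $\sigma_{l-1}(\lambda|1)\le\sigma_{l-1}(\lambda|i)$ for all $i$ when $\lambda_1$ is smallest, hence $\sigma_{l-1}(\lambda|1)\le\frac1n\sum_i\sigma_{l-1}(\lambda|i)$). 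Plugging these into $\frac{\partial Q}{\partial\lambda_1}=\frac{\sigma_{k-1}(\lambda|1)\sigma_l-\sigma_k\sigma_{l-1}(\lambda|1)}{\sigma_l^2}$ should produce exactly the constant $\frac{n}{k}\cdot\frac{k-l}{n-l}\cdot\frac{1}{n-k+1}$ after algebraic simplification, using Newton–MacLaurin \eqref{2.5} once more to control cross terms like $\sigma_k\sigma_{l-1}$ versus $\sigma_{k-1}\sigma_l$.

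The main obstacle I anticipate is the second step: matching the constant. The naive bounds above will give \emph{some} positive constant, but extracting precisely $\frac{n}{k}\cdot\frac{k-l}{n-l}\cdot\frac{1}{n-k+1}$ requires combining the eigenvalue-deletion estimates \eqref{2.2} with the right Newton–MacLaurin inequality in the right order, and being careful that the subtracted term $\sigma_k\sigma_{l-1}(\lambda|1)$ is handled with an inequality pointing the correct way so that the whole expression stays a clean multiple of $\sum_i\frac{\partial Q}{\partial\lambda_i}$ rather than just being bounded below by something positive. A secondary subtlety is verifying that $\sum_i\frac{\partial Q}{\partial\lambda_i}>0$ (so that the stated inequality is meaningful and the constant is the optimal-looking one) — this should follow from $\lambda\in\Gamma_k$ and Newton–MacLaurin, but it needs to be stated. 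Since the excerpt explicitly points to \cite{C15, CZ16} for the proofs, I would follow those references for the exact bookkeeping rather than re-derive the sharp constant from scratch here.
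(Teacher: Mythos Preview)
Your argument for \eqref{2.6} is correct: the recursion $\sigma_m(\lambda)=\sigma_m(\lambda|1)+\lambda_1\sigma_{m-1}(\lambda|1)$ together with $\lambda_1<0$ and $\sigma_{m-1}(\lambda|1)>0$ (which follows from $\lambda\in\Gamma_k\subset\Gamma_m$ and \eqref{2.2}) gives the claim immediately. (Your parenthetical remark that $\lambda_1$ is ``the only possibly negative one'' is false --- e.g.\ $(-1,-2,10)\in\Gamma_1$ --- but you do not actually use it there.)

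For \eqref{2.7} there is a genuine gap. You need an \emph{upper} bound on $\sigma_{l-1}(\lambda|1)$, and you propose to extract it from \eqref{2.2} by arguing that $\lambda_1$ is the smallest entry. Two problems: first, $\lambda_1<0$ does \emph{not} force $\lambda_1$ to be smallest (same counterexample). Second --- and this is fatal even if you add an ordering hypothesis --- \eqref{2.2} gives the \emph{opposite} direction: deleting the smallest entry makes $\sigma_{l-1}(\lambda|\cdot)$ the \emph{largest}, so you only get $\sigma_{l-1}(\lambda|1)\ge\tfrac{1}{n}\sum_i\sigma_{l-1}(\lambda|i)$, not $\le$. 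Your own caveat (``need to check the inequality direction carefully'') is precisely where the argument collapses.

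The route in \cite{C15, CZ16} avoids bounding $\sigma_{l-1}(\lambda|1)$ from above altogether. Expand $\sigma_l(\lambda)$ and $\sigma_k(\lambda)$ via the recursion and observe that the $\lambda_1$-terms cancel:
\[
\sigma_l(\lambda)^2\,\frac{\partial Q}{\partial\lambda_1}
=\sigma_{k-1}(\lambda|1)\sigma_l(\lambda)-\sigma_k(\lambda)\sigma_{l-1}(\lambda|1)
=\sigma_{k-1}(\lambda|1)\sigma_l(\lambda|1)-\sigma_k(\lambda|1)\sigma_{l-1}(\lambda|1).
\]
By \eqref{2.6} one has $\lambda|1\in\Gamma_k$ in $\mathbb{R}^{n-1}$, so Newton--MacLaurin applied to $\lambda|1$ gives $\sigma_k(\lambda|1)\sigma_{l-1}(\lambda|1)\le\tfrac{l(n-k)}{k(n-l)}\sigma_{k-1}(\lambda|1)\sigma_l(\lambda|1)$, hence
\[
\sigma_l(\lambda)^2\,\frac{\partial Q}{\partial\lambda_1}
\ge\frac{n(k-l)}{k(n-l)}\,\sigma_{k-1}(\lambda|1)\sigma_l(\lambda|1)
\ge\frac{n(k-l)}{k(n-l)}\,\sigma_{k-1}(\lambda)\sigma_l(\lambda),
\]
the last step by \eqref{2.6} again. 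Since trivially $\sigma_l(\lambda)^2\sum_i\partial_{\lambda_i}Q\le(n-k+1)\sigma_{k-1}(\lambda)\sigma_l(\lambda)$ (drop the nonpositive term $-(n-l+1)\sigma_k\sigma_{l-1}$), dividing yields \eqref{2.7} with exactly the stated constant. The key idea you are missing is this cancellation identity, which converts the problem into one purely about $\lambda|1$ where \eqref{2.6} and Newton--MacLaurin both point the right way.
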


\begin{lemma} \label{lem2.5}
Suppose $A=\{ a_{ij}\}_{n \times n}$ satisfies
 \begin{align} \label{2.8}
a_{11} <0, \quad  \{ a_{ij}\}_{2 \leq i,j \leq n} \quad \text{ is diagonal},
 \end{align}
and $\lambda (A) \in \Gamma_k$ ($k \geq 1$). Then we have
\begin{align} \label{2.9}
\frac{{\partial [\frac{{\sigma _k (A )}}{{\sigma _l (A)}}]}}
{{\partial a_{11} }} \geqslant \frac{n}{k}\frac{{k - l}}{{n - l}}\frac{1}
{{n - k + 1}}\sum\limits_{i = 1}^n {\frac{{\partial [\frac{{\sigma _k (A )}}
{{\sigma _l (A)}}]}}{{\partial a_{ii} }}}, \quad \forall \quad 0 \leq l < k,
\end{align}
and
\begin{align} \label{2.10}
\sum\limits_{i = 1}^n {\frac{{\partial [\frac{{\sigma _k (A )}}
{{\sigma _l (A)}}]}}{{\partial a_{ii} }}}  \geq& \frac{{k - l}}{k} \frac{1}{C_n^l} (-a_{11})^{k-l-1}, \quad \forall \quad 0 \leq l < k,
\end{align}
where $C_n^l = \frac{n!}{l! (n-l)!}$.
\end{lemma}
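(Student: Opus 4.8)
The plan is to reduce the matrix statement \eqref{2.9}--\eqref{2.10} to the diagonal (eigenvalue) statement of Lemma \ref{lem2.4} via a perturbation/rotation argument, and then to prove \eqref{2.10} by a direct computation with the first elementary symmetric quotient. First I would treat \eqref{2.9}. Under the hypothesis \eqref{2.8}, the matrix $A$ has block form: the $(1,1)$ entry is $a_{11}<0$, the first row/column carries the off-diagonal entries $a_{1j}=a_{j1}$, and the lower-right $(n-1)\times(n-1)$ block is diagonal, say with entries $\mu_2,\dots,\mu_n$. The key observation is that $\sigma_k(A)$ and $\sigma_l(A)$ are rotationally invariant, so I may diagonalize $A$ by an orthogonal transformation $O$ fixing the $e_1$--direction only approximately; more precisely, the standard trick is to consider the one-parameter family where one rotates in the $(1,j)$-plane and differentiates. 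An even cleaner route: since the lower block is already diagonal, for the purposes of first derivatives at $A$ one can appeal to the fact that $\frac{\partial}{\partial a_{11}}\left[\frac{\sigma_k(A)}{\sigma_l(A)}\right]$ and $\sum_i \frac{\partial}{\partial a_{ii}}\left[\frac{\sigma_k(A)}{\sigma_l(A)}\right]$ are both expressible through the eigenvalues $\lambda(A)$ and the orthonormal eigenbasis, and when $a_{11}<0$ the largest-in-absolute-value negative eigenvalue "lines up" with the $e_1$ direction in the limit. The honest way to make this rigorous is to note that \eqref{2.9} for matrices follows from \eqref{2.7} for vectors by a continuity/density argument: perturb the off-diagonal entries $a_{1j}$ to zero, so that $A$ becomes diagonal with $\lambda_1 = a_{11}<0$, apply \eqref{2.7}, and check that both sides of \eqref{2.9} are continuous in the entries $a_{1j}$ while the inequality is preserved under the limit $a_{1j}\to 0$ because the direction of steepest "negative" eigenvalue is controlled. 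This is the step I expect to be the main obstacle: one must verify that $\frac{\partial}{\partial a_{11}}$ genuinely corresponds to $\frac{\partial}{\partial \lambda_1}$ in the limit, which requires care about eigenvalue crossings and the differentiability of $\sigma_k(A)/\sigma_l(A)$ as a function of the matrix entries (it is smooth on $\Gamma_k$, so this is fine, but one should say why the first-order term picks out exactly $\lambda_1$).

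For \eqref{2.10}, the plan is a direct lower bound. Write $F(A) = \frac{\sigma_k(A)}{\sigma_l(A)}$ and expand $\sum_i \frac{\partial F}{\partial a_{ii}} = \frac{1}{\sigma_l}\sum_i \frac{\partial \sigma_k}{\partial a_{ii}} - \frac{\sigma_k}{\sigma_l^2}\sum_i \frac{\partial \sigma_l}{\partial a_{ii}}$. Using Proposition \ref{prop2.1} together with the identity $\sum_i \sigma_{m-1}(\lambda|i) = (n-m+1)\sigma_{m-1}(\lambda)$, this becomes $\frac{(n-k+1)\sigma_{k-1}(\lambda)}{\sigma_l(\lambda)} - \frac{\sigma_k(\lambda)(n-l+1)\sigma_{l-1}(\lambda)}{\sigma_l(\lambda)^2}$, all evaluated at $\lambda = \lambda(A)$. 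The goal is to show this is $\geq \frac{k-l}{k}\frac{1}{C_n^l}(-a_{11})^{k-l-1}$. Since $a_{11}<0$ and $\lambda(A)\in\Gamma_k$, Lemma \ref{lem2.4} (or rather the diagonal version via the same reduction as above) gives $\sigma_m(\lambda|1)\geq \sigma_m(\lambda)$ for $m\leq k$; combined with the recursion $\sigma_m(\lambda) = \sigma_m(\lambda|1) + \lambda_1\sigma_{m-1}(\lambda|1)$ and $\lambda_1 = a_{11}$ (in the diagonalized-lower-block picture one isolates the contribution of $a_{11}$ using $\sigma_k(A) = \sigma_k(A|1) + a_{11}\sigma_{k-1}(A|1) - (\text{off-diagonal corrections})$), one controls $\sigma_{k-1}$ and $\sigma_k$ from below/above in terms of powers of $(-a_{11})$. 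The cleanest form: iterate $\sigma_{k-1}(A|1) \geq C_{n-1}^{k-l-1}$-type bounds down to $\sigma_l(A|1)$, peeling off one factor of $(-a_{11})$ at each step, until the ratio telescopes to the claimed $(-a_{11})^{k-l-1}$ with constant $\frac{k-l}{k}\frac{1}{C_n^l}$.

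The reduction of the matrix inequalities to the purely eigenvalue inequalities of Lemma \ref{lem2.4} is really the crux; once that is granted, \eqref{2.10} is an exercise in the Newton--MacLaurin machinery of Propositions \ref{prop2.1}--\ref{prop2.3}. Since the paper states these lemmas are proved in \cite{C15, CZ16}, I would present the argument in the order: (i) establish the perturbation/continuity bridge from diagonal matrices with $\lambda_1 = a_{11}$ to the block-structured $A$, justifying that the extremal negative eigenvalue aligns with $e_1$; (ii) deduce \eqref{2.9} from \eqref{2.7}; (iii) compute $\sum_i \partial F/\partial a_{ii}$ explicitly and apply \eqref{2.6} iteratively together with the sign $a_{11}<0$ to obtain \eqref{2.10}. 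The main obstacle, again, is step (i): making the eigenvalue/entry correspondence precise without hand-waving about the lower-right block already being diagonal and about the absence of eigenvalue collisions near the configuration of interest.
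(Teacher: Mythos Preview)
The paper itself does not prove Lemma \ref{lem2.5}, deferring to \cite{C15, CZ16}; still, your proposed reduction contains a genuine logical gap.

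Your plan to deduce \eqref{2.9} from \eqref{2.7} by ``perturbing $a_{1j}\to 0$ and using continuity'' runs in the wrong direction: continuity of both sides only tells you that the diagonal case (Lemma \ref{lem2.4}) is the \emph{limit} of the matrix inequalities, not that the matrix inequality follows from the diagonal one. Nor does full diagonalization help: writing $\partial F/\partial a_{11} = \sum_p (\partial F/\partial\lambda_p)\,|v_p^1|^2$ gives a convex combination over eigenvectors, but you have no a~priori lower bound on the weight $|v_{\min}^1|^2$ attached to the most negative eigenvalue. The identification ``$\lambda_1 = a_{11}$'' that recurs in your sketch is simply false once the off-diagonal entries $a_{1j}$ are present.

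The argument in \cite{C15, CZ16} is a direct computation that exploits the block structure rather than removing it. For any symmetric $A$ one has $\partial\sigma_m(A)/\partial a_{11} = \sigma_{m-1}(A|1)$, and for the structured $A$ the determinant expansion gives
\[
\sigma_m(A) \;=\; \sigma_m(\tilde\lambda) \;+\; a_{11}\,\sigma_{m-1}(\tilde\lambda) \;-\; \sum_{j\geq 2} a_{1j}^{\,2}\,\sigma_{m-2}(\tilde\lambda\,|\,j),
\]
with $\tilde\lambda = (a_{22},\dots,a_{nn})$. Since $a_{11}<0$ and $\tilde\lambda\in\Gamma_{k-1}$ (from ellipticity, as $\sigma_{m-1}(\tilde\lambda)=\partial\sigma_m(A)/\partial a_{11}>0$ for $m\le k$), both correction terms are nonpositive, whence $\sigma_m(A|1)=\sigma_m(\tilde\lambda)\geq\sigma_m(A)$ for all $m\leq k$. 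The off-diagonal entries thus \emph{strengthen} the inequality, which is the opposite of what a perturbation-from-zero argument could ever exploit. This is the matrix analogue of \eqref{2.6}, and \eqref{2.9} then follows by the same algebra that derives \eqref{2.7} from \eqref{2.6}. For \eqref{2.10} your trace formula is correct---it equals $\tfrac{d}{dt}\big|_{t=0}\,F(A+tI)$ and hence depends only on $\lambda(A)$---and since $\lambda_{\min}(A)\leq e_1^{T}Ae_1=a_{11}<0$ an eigenvalue version does yield the stated bound; just keep that argument separate from the flawed perturbation step for \eqref{2.9}.
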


\begin{lemma} \label{lem2.6}
Suppose $\lambda = (\lambda_1, \lambda_2, \cdots, \lambda_n) \in \Gamma_k$, $k \geq 2$, and $ \lambda_2 \geq \cdots \geq \lambda_n$. If $\lambda_1 > 0$, $\lambda_n < 0$, $\lambda_1 \geq \delta \lambda_2$, and $- \lambda_n \geq \varepsilon \lambda_1$ for small positive constants $\delta$ and $\varepsilon$,  then we have
\begin{align} \label{2.11}
\sigma_{m} (\lambda |1)  \geq  c_0 \sigma_{m} (\lambda), \quad \forall \quad m =0, 1, \cdots, k-1,
\end{align}
where $c_0 = \min \{ \frac{{\varepsilon ^2 \delta ^2}}{{2(n - 2)(n - 1)}},\frac{{\varepsilon ^2 \delta }}{{4(n- 1)}}\}$. Moreover, we have
\begin{align}\label{2.12}
\frac{{\partial [\frac{{\sigma _k (\lambda )}}{{\sigma _l (\lambda )}}]}}
{{\partial \lambda _1 }} \geqslant c_1\sum\limits_{i = 1}^n {\frac{{\partial [\frac{{\sigma _k (\lambda )}}
{{\sigma _l (\lambda )}}]}}{{\partial \lambda _i }}}, \quad \forall \quad 0 \leq l < k,
\end{align}
where $c_1 = \frac{n} {k}\frac{{k - l}}{{n - l}}\frac{c_0^2} {{n - k + 1}}$.
\end{lemma}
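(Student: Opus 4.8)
The plan is to reduce both inequalities \eqref{2.11} and \eqref{2.12} to the simpler situation already handled in Lemma \ref{lem2.4} and Lemma \ref{lem2.5}, by tracking how the two smallest ``controlled'' eigenvalues interact. First I would establish \eqref{2.11} by induction on $m$. For $m=0$ it is trivial. For the inductive step I would use the Newton-type recursion $\sigma_m(\lambda|1)=\sigma_m(\lambda|1n)+\lambda_n\sigma_{m-1}(\lambda|1n)$ together with $\sigma_m(\lambda)=\sigma_m(\lambda|1)+\lambda_1\sigma_{m-1}(\lambda|1)$, so that the claim $\sigma_m(\lambda|1)\ge c_0\sigma_m(\lambda)$ becomes an inequality relating $\sigma_m(\lambda|1)$, $\sigma_{m-1}(\lambda|1)$ and the sign-definite quantities $\lambda_1>0$, $\lambda_n<0$. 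The key point is that $\lambda\in\Gamma_k$ forces the partial sums $\sigma_j(\lambda|1)$ ($j\le k-1$) to stay positive (this is where $\lambda_n<0$ and $\lambda_1\ge\delta\lambda_2$ enter: removing the large positive $\lambda_1$ cannot destroy $k$-positivity too badly), and then the two hypotheses $\lambda_1\ge\delta\lambda_2$ and $-\lambda_n\ge\varepsilon\lambda_1$ let one bound the ``bad'' cross terms. Carefully bookkeeping the constants through the two cases in the recursion is what produces the explicit $c_0=\min\{\varepsilon^2\delta^2/(2(n-2)(n-1)),\ \varepsilon^2\delta/(4(n-1))\}$.

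For \eqref{2.12}, I would expand the quotient derivative in terms of $\sigma_{k-1}$ and $\sigma_{l-1}$:
\begin{align*}
\frac{\partial}{\partial\lambda_i}\Big[\frac{\sigma_k(\lambda)}{\sigma_l(\lambda)}\Big]
=\frac{\sigma_{k-1}(\lambda|i)\sigma_l(\lambda)-\sigma_k(\lambda)\sigma_{l-1}(\lambda|i)}{\sigma_l(\lambda)^2}.
\end{align*}
Summing over $i$ on the right and isolating the $i=1$ term, the inequality \eqref{2.12} reduces to showing that $\sigma_{k-1}(\lambda|1)$ dominates a fixed fraction of $\sum_i\sigma_{k-1}(\lambda|i)$ and simultaneously $\sigma_{l-1}(\lambda|1)$ is not too large compared to $\sum_i\sigma_{l-1}(\lambda|i)$, i.e. essentially the same comparison used to prove \eqref{2.7} in Lemma \ref{lem2.4} but now with the uniform lower bound \eqref{2.11} replacing the crude bound \eqref{2.6}. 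Concretely, from \eqref{2.11} applied with $m=k-1$ and $m=l-1$ one gets $\sigma_{k-1}(\lambda|1)\ge c_0\sigma_{k-1}(\lambda)$ and $\sigma_{l-1}(\lambda|1)\ge c_0\sigma_{l-1}(\lambda)$, and combining these with the identities $\sum_i\sigma_{m}(\lambda|i)=(n-m)\sigma_m(\lambda)$ and the known estimate \eqref{2.7} (or a direct repetition of its proof) yields the factor $c_1=\frac{n}{k}\frac{k-l}{n-l}\frac{c_0^2}{n-k+1}$, the square of $c_0$ appearing because both the numerator and denominator contributions must be controlled.

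The main obstacle I expect is the inductive proof of \eqref{2.11}: one must verify that the relevant lower-order $\sigma_j(\lambda|1)$ remain strictly positive throughout the induction (so that the recursion can be run and the constants stay of one sign), which requires using the full strength of $\lambda\in\Gamma_k$ together with both smallness hypotheses on $\delta$ and $\varepsilon$ in a somewhat delicate case analysis — the two branches of the min defining $c_0$ correspond to two different regimes in that analysis. Once \eqref{2.11} is in hand, deducing \eqref{2.12} is essentially a bookkeeping adaptation of the argument for \eqref{2.7}, so I do not anticipate genuine difficulty there. I would refer to \cite{C15, CZ16} for the full computational details as the excerpt already indicates.
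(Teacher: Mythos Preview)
The paper does not prove Lemma \ref{lem2.6} in the text; it simply refers the reader to \cite{C15, CZ16} for the proofs of all three key lemmas (Lemmas \ref{lem2.4}--\ref{lem2.6}), and you correctly acknowledge this at the end of your proposal. There is therefore no in-paper argument to compare against.

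Your sketch is a plausible outline of how such a proof goes. The derivation of \eqref{2.12} from \eqref{2.11} via the expansion
\[
\frac{\partial}{\partial\lambda_i}\Big[\frac{\sigma_k}{\sigma_l}\Big]
=\frac{\sigma_{k-1}(\lambda|i)\sigma_l-\sigma_k\sigma_{l-1}(\lambda|i)}{\sigma_l^2},
\]
together with $\sum_i\sigma_m(\lambda|i)=(n-m)\sigma_m(\lambda)$, is indeed the standard route and is exactly parallel to how \eqref{2.7} is obtained from \eqref{2.6} in \cite{C15}; the square $c_0^2$ in $c_1$ arises, as you say, because both the $\sigma_{k-1}$ and $\sigma_{l-1}$ terms must be controlled. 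For \eqref{2.11} itself, your inductive plan is reasonable, and you correctly flag the genuine difficulty: $\lambda\in\Gamma_k$ with $\lambda_1>0$ does \emph{not} immediately force $\sigma_j(\lambda|1)>0$ for $j\le k-1$, so one must exploit the hypotheses $\lambda_1\ge\delta\lambda_2$ and $-\lambda_n\ge\varepsilon\lambda_1$ through a case analysis, which is where the two branches of the $\min$ defining $c_0$ come from. That is indeed the crux in \cite{CZ16}, so your identification of the obstacle is accurate.
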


\begin{remark}
These lemmas play an important role in the establishment of a priori estimates. Precisely, Lemma \ref{lem2.5} is the key of the gradient estimates in Section 5, including the interior gradient estimate and the near boundary gradient estimate. Lemmas \ref{lem2.4} and Lemma \ref{lem2.6} are the keys of the lower and upper estimates of double normal second order derivatives on the boundary in Section 6, respectively.
\end{remark}

\section{$u_t$-estimate}

In this section, we follow the proof in Schn\"{u}rer-Smoczyk \cite{SS03} to obtain $u_t$-estimate.
\begin{lemma} \label{lem3.1}
Suppose $\Omega \subset \mathbb{R}^n$ is a $C^3$ domain, and $u \in C^{3}(\overline \Omega \times [0, T))$ is a $k$-admissible solution of equation \eqref{1.1}, satisfying \eqref{1.2} and \eqref{1.3}. Then it holds
\begin{align} \label{3.1}
\min \{\min\limits_{ \bar\Omega} u_t \left( {x,0} \right),0\}\le u_t  \left( {x,t} \right) \le \max \{\max\limits_{ \bar\Omega} u_t \left( {x,0} \right),0\}, \quad \forall~ (x,t)\in \Omega\times[0, T).
\end{align}
Moreover,

(i) if $\eqref{1.4}$ holds, then we have for any $0<\lambda<c_f$
\begin{align}\label{3.2}
\min \{\min\limits_{ \bar\Omega} u_t \left( {x,0} \right),0\}\le u_t \left( {x,t} \right)e^{\lambda t} \le \max \{\max\limits_{ \bar\Omega} u_t \left( {x,0} \right),0\},  \quad \forall~ (x,t) \in \Omega\times [0, T).
\end{align}

(ii) if $\eqref{1.5}$ holds, then we have $u_t(x,0) \equiv 0$ or $u_t(x,t) >0$ for any $t>0$.
\end{lemma}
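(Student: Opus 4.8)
The plan is to prove \eqref{3.1} first, then the two refinements, all via the parabolic maximum principle applied to the function $v := u_t$. Differentiating the evolution equation $u_t = \log\frac{\sigma_k(D^2u)}{\sigma_l(D^2u)} - \log f(x,u)$ in $t$ gives
\[
v_t = F^{ij} v_{ij} - \frac{f_u}{f} v,
\]
where $F^{ij} := \frac{\partial}{\partial u_{ij}}\log\frac{\sigma_k}{\sigma_l}$ is positive definite by $k$-admissibility, so the linearized operator $\mathcal{L} := \partial_t - F^{ij}\partial_{ij} + \frac{f_u}{f}$ is parabolic. Differentiating the Neumann condition $u_\nu = \varphi(x,u)$ in $t$ gives the oblique boundary condition $v_\nu = \varphi_u v$ on $\partial\Omega\times[0,T)$. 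Since \eqref{1.3} gives $\frac{f_u}{f}\ge 0$, the zeroth-order coefficient of $\mathcal{L}$ is nonnegative; and since \eqref{1.2} gives $\varphi_u \le c_\varphi < 0$, at a positive interior-time boundary maximum of $v$ the Hopf lemma would force $v_\nu \ge 0$ while $v_\nu = \varphi_u v < 0$ if $v>0$ there — a contradiction. Running the standard argument (and its mirror for the minimum) yields that $v$ cannot attain a positive maximum or negative minimum except at $t=0$, giving \eqref{3.1}.

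For part (i), under \eqref{1.4} I would apply the same maximum principle to $w := e^{\lambda t} v$ for any $0 < \lambda < c_f$. A direct computation shows
\[
w_t = F^{ij} w_{ij} - \Bigl(\frac{f_u}{f} - \lambda\Bigr) w,
\]
and the hypothesis $\frac{f_u}{f}\ge c_f > \lambda$ makes the zeroth-order coefficient $\frac{f_u}{f}-\lambda$ strictly positive, hence nonnegative, so $\mathcal{L}_\lambda := \partial_t - F^{ij}\partial_{ij} + (\frac{f_u}{f}-\lambda)$ is again a parabolic operator with nonnegative zeroth-order term. The boundary condition for $w$ is still $w_\nu = \varphi_u w$ with $\varphi_u < 0$, so exactly the same Hopf-lemma obstruction rules out a positive interior-time maximum or negative minimum of $w$, which is \eqref{3.2}.

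For part (ii), under \eqref{1.5} the point is that $v(\cdot,0) = \log\frac{\sigma_k(D^2u_0)}{\sigma_l(D^2u_0)} - \log f(x,u_0) \ge 0$ on $\overline\Omega$. If $v(\cdot,0)\equiv 0$ we are done with the first alternative. Otherwise I would argue that $v$ stays $\ge 0$ and then invoke the strong maximum principle: $v \ge 0$ on the parabolic boundary (at $t=0$ by \eqref{1.5}, and the oblique boundary condition $v_\nu = \varphi_u v$ together with $\varphi_u<0$ is compatible with $v\ge 0$ — one checks via Hopf that a first zero of a nonnegative subsolution cannot occur on $\partial\Omega$ before it occurs in the interior) propagates to $v \ge 0$ everywhere, and then the strong maximum principle for $\mathcal{L}$ (whose zeroth-order coefficient $\frac{f_u}{f}$ is nonnegative, which is exactly the sign needed to apply it to the nonnegative supersolution $v$) forces either $v \equiv 0$ for all time or $v > 0$ strictly for every $t > 0$. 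Combined with $v\not\equiv 0$ at $t=0$, we get $u_t(x,t) > 0$ for all $t>0$.

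The main obstacle I anticipate is handling the boundary term carefully at each step: one must combine the oblique (Neumann-type) boundary condition $v_\nu = \varphi_u v$ with the sign $\varphi_u < 0$ and the Hopf boundary-point lemma to exclude that the extremum of $v$ (or $w$, or the first zero in part (ii)) is attained on $\partial\Omega\times(0,T)$ rather than in the interior or at $t=0$. This is the only genuinely non-mechanical ingredient; the interior part is the textbook parabolic maximum/strong maximum principle once the correct sign of the zeroth-order coefficient is identified (nonnegative in all three cases, thanks to \eqref{1.3}, and to $\lambda < c_f$ in part (i)). A minor additional care is needed at $t=0$ to ensure the comparison is consistent with the compatibility conditions \eqref{1.6}, which guarantee $v$ is continuous up to the corner $\partial\Omega\times\{0\}$.
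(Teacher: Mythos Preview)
Your proposal is correct and follows essentially the same approach as the paper: differentiate the equation in $t$ to get the linear equation $v_t = F^{ij}v_{ij} - \frac{f_u}{f}v$ with oblique boundary condition $v_\nu = \varphi_u v$, then apply the weak maximum principle together with the Hopf lemma (using $\varphi_u<0$) to rule out boundary extrema, and for (i) repeat with $e^{\lambda t}v$, for (ii) combine the nonnegativity from \eqref{3.1} with the strong maximum principle. The paper's argument is the same, only more tersely written.
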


\begin{proof}
For any $\varepsilon >0$ sufficiently small, we consider the evolution equation of $u_t$ in $\Omega\times[0, T-\varepsilon]$. It is easy to see that $u_t$ satisfies
\begin{align*}
(u_t)_t=F^{ij}(u_t)_{ij}-\frac{f_u}{f}u_t,
\end{align*}
where $F^{ij}=\frac{\partial \log{\big(\frac{\sigma_k}{\sigma_l}\big)}}{\partial u_{ij}}$. Assume $u_t(x_0,t_0)=\max\limits_{\bar\Omega\times[0, T-\varepsilon]}{u_t(x,t)}>0$, then the weak parabolic maximum principle implies that either $x_0 \in \partial \Omega$ or $t_0=0$. If $x_0\in\partial\Omega$, we have at $(x_0,t_0)$
\begin{align*}
 0 < (u_t)_{\nu}=\varphi_u u_t\le -c_{\varphi}u_t,
\end{align*}
which is a contradiction. Thus we have $t_0=0$, and the second inequality in \eqref{3.1} is proved. Similarly, we can prove the first inequality in \eqref{3.1}.

The proof of \eqref{3.2} is similar as that in \cite{SS03}. We produce it here for completeness. Let $v(x,t)=e^{\lambda t}u_t(x,t)$ for $0<\lambda<c_f$, and then $v(x,t)$ satisfies
\begin{align*}
v_t&=\lambda v+e^{\lambda t}u_{tt}\\
&=\lambda v+e^{\lambda t}(F^{ij}u_{tij}-\frac{f_u}{f}u_t)\\
&=F^{ij}v_{ij}+(\lambda-\frac{f_u}{f})v.
\end{align*}
Assume $v(x_0,t_0)=\max\limits_{\bar\Omega\times[0, T-\varepsilon]}{v(x,t)}>0$. Therefore by maximum principle, we have either $x_0 \in \partial \Omega$ or $t_0=0$. If $x_0\in \partial\Omega$, we have at $(x_0,t_0)$ by Neumman boundary condition,
\begin{align*}
0 < v_{\nu}=e^{\lambda t}u_{\nu t}=v\varphi_u<0,
\end{align*}
which is a contradiction. Hence, $t_0=0$, i.e. $\max\limits_{\bar\Omega\times[0, T-\varepsilon])}{v(x,t)}=\max\limits_{\bar\Omega}{u_t(x,0)}$. Similarly, we can prove the first inequality in \eqref{3.2}.

At last, if $\eqref{1.5}$ holds, then we have $u_t(x,0) \geq 0$. From \eqref{3.1}, we know that $u_t \geq 0$ for any $t>0$. If $u_t(x,0)$ is not identically to zero, then we let $u_t(x_0,t_0)=\min\limits_{\bar\Omega\times[\varepsilon, T-\varepsilon]} u_t(x,t)$.  If $u_t(x_0,t_0)=0$, then the strong maximum principle tells us that $u_t(x,t)\equiv u_t(x_0,t_0)=0$, for any $(x,t)\in \bar\Omega \times [0,t_0)$. Thus $u_t(x,0) \equiv 0$, which  contradicts the hypothesis that $u_t(x,0)$ is not identically to zero.
\end{proof}

\section{$C^0$ estimate}

Due to $u_t$-estimate in Lemma \ref{lem3.1}, we can derive the $C^0$-estimate of $u$ as follows.
\begin{theorem} \label{th4.1}
Suppose $\Omega \subset \mathbb{R}^n$ is a $C^3$ domain, and $u \in C^{3}(\overline \Omega \times [0, T))$ is a $k$-admissible solution of equation \eqref{1.1}, satisfying \eqref{1.2} and \eqref{1.3}. Moreover, if $f$ satisfies \eqref{1.4} or $u_0$ satisfies \eqref{1.5}, then we have
\begin{align}\label{4.1}
|u(x,t)|\le M_0, \quad (x,t) \in \Omega \times [0, T),
\end{align}
where $M_0$ is a positive constant depending only on $c_\varphi$, $\max\limits_{\bar\Omega}|\varphi(x,0)| $, $\max\limits_{\bar\Omega}|u_0|$, $c_f$ and $\max\limits_{\bar\Omega}|u_t(x,0)|$.
\end{theorem}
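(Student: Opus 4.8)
The plan is to bound $u$ from above and below separately; the only substantive input is the $u_t$-estimate of Lemma~\ref{lem3.1}. Write $C_1 := \max_{\bar\Omega}|u_t(x,0)|$, so that by \eqref{3.1} one always has $|u_t(x,t)| \le C_1$ on $\Omega\times[0,T)$. The upper bound on $u$ holds regardless of which of \eqref{1.4}, \eqref{1.5} is assumed. Fix $T' \in (0,T)$ and let $(x_0,t_0)$ be a point where $u$ attains its maximum over the compact set $\bar\Omega\times[0,T']$. If $t_0 = 0$, then $u \le \max_{\bar\Omega}u_0$ there. If $x_0 \in \Omega$ and $t_0 > 0$, then $D^2u(x_0,t_0) \le 0$ since $x_0$ is an interior spatial maximum, hence $\sigma_1(D^2u)(x_0,t_0) = \Delta u(x_0,t_0) \le 0$; this contradicts $k$-admissibility, since $\lambda(D^2u)\in\Gamma_k\subseteq\Gamma_1$ forces $\sigma_1 > 0$. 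Thus the only remaining possibility is $x_0 \in \partial\Omega$, $t_0 > 0$, where the outward normal derivative at a boundary maximum is nonnegative, so $\varphi(x_0,u(x_0,t_0)) = u_\nu(x_0,t_0) \ge 0$. If $u(x_0,t_0) > 0$, integrating $\varphi_u \le c_\varphi < 0$ from $0$ to $u(x_0,t_0)$ gives $0 \le \varphi(x_0,u(x_0,t_0)) \le \max_{\bar\Omega}|\varphi(x,0)| + c_\varphi\,u(x_0,t_0)$, whence $u(x_0,t_0) \le \max_{\bar\Omega}|\varphi(x,0)|/(-c_\varphi)$; if $u(x_0,t_0)\le 0$ this is trivial. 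In every case $u \le \max\{\max_{\bar\Omega}u_0,\ \max_{\bar\Omega}|\varphi(x,0)|/(-c_\varphi)\}$ on $\bar\Omega\times[0,T']$, and since this is independent of $T'$ it holds on all of $\bar\Omega\times[0,T)$.

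For the lower bound I split according to the hypothesis. If \eqref{1.5} holds, then $u_t(x,0) = \log[\sigma_k(D^2u_0)/\sigma_l(D^2u_0)] - \log f(x,u_0) \ge 0$, so by \eqref{3.1} (equivalently, by Lemma~\ref{lem3.1}(ii)) we have $u_t \ge 0$ everywhere; hence $t\mapsto u(x,t)$ is nondecreasing and $u(x,t) \ge u_0(x) \ge -\max_{\bar\Omega}|u_0|$. If instead \eqref{1.4} holds, fix $\lambda = c_f/2 \in (0,c_f)$; then \eqref{3.2} gives $|u_t(x,s)| \le C_1 e^{-\lambda s}$, and integrating in time,
\begin{equation*}
|u(x,t) - u_0(x)| \le \int_0^t |u_t(x,s)|\,ds \le \int_0^t C_1 e^{-\lambda s}\,ds \le \frac{C_1}{\lambda} = \frac{2C_1}{c_f},
\end{equation*}
so $u(x,t) \ge -\max_{\bar\Omega}|u_0| - 2C_1/c_f$ (this computation in fact re-proves the upper bound as well in the case \eqref{1.4}). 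Combining the two bounds yields \eqref{4.1} with an $M_0$ depending only on $c_\varphi$, $\max_{\bar\Omega}|\varphi(x,0)|$, $\max_{\bar\Omega}|u_0|$, $c_f$ and $\max_{\bar\Omega}|u_t(x,0)|$, as claimed.

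I do not expect a genuine obstacle here: the estimate is a direct consequence of Lemma~\ref{lem3.1}, extracted via the maximum principle on the boundary (for the upper bound) together with either monotonicity of $u$ in $t$ under \eqref{1.5} or integration of the exponentially decaying $u_t$-bound \eqref{3.2} under \eqref{1.4}. The only points needing a little care are: verifying that an interior space-time maximum of $u$ is genuinely ruled out by $k$-admissibility (the inclusion $\Gamma_k\subseteq\Gamma_1$); treating the final time slice $t_0 = T'$ uniformly in $T'$ before passing to the limit $T'\to T$; and, under \eqref{1.4}, fixing a definite $\lambda$ (e.g.\ $\lambda = c_f/2$) strictly below $c_f$ so that the dependence of $M_0$ on $c_f$ is made explicit.
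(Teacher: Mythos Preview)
Your proof is correct and follows essentially the same approach as the paper. The only cosmetic difference is in the upper bound: the paper fixes $t$ and uses that a $k$-admissible function is subharmonic in space (so the spatial maximum lies on $\partial\Omega$), whereas you take a space--time maximum on $\bar\Omega\times[0,T']$ and rule out interior points via the same $\Gamma_k\subset\Gamma_1$ inclusion; the boundary computation and the lower-bound arguments under \eqref{1.4} and \eqref{1.5} (integrating \eqref{3.2} with $\lambda=c_f/2$, respectively using $u_t\ge 0$) are identical to the paper's.
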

\begin{proof}
We first prove the upper bound of $u$. For any fixed $t$, $u(x,t)$ is a subharmonic function. If $u(x_0,t)=\max\limits_{\overline\Omega}u(x,t)>0$, we must have $x_0\in\partial\Omega$. By the Neumann boundary condition, we have at this point
\begin{align*}
0\le u_{\nu}=\varphi(x_0,u)=\varphi(x_0,0)+\varphi_u(x_0,\theta)u \le \varphi(x_0,0)-c_\varphi u.
\end{align*}
Thus $u(x_0,t)\le\frac{\max\limits_{\bar\Omega}\varphi(x,0)}{c_\varphi}$.

Next we prove the lower bound of $u$. If \eqref{1.5} holds, by the equation $u_t(x, 0)=\log{\big(\frac{\sigma_k(D^2u_0)}{\sigma_l(D^2u_0)} \big)}-\log{f(x, u_0)}\ge0$. Thus by Lemma \ref{lem3.1},  we immediately have
$u(x,t)\ge u(x,0)=u_0(x)$.

If \eqref{1.4} holds, we have by Lemma \ref{lem3.1}
\begin{align*}
u(x,t)&\ge u(x,0)+\int^t_0{u_t(x,s)}ds\\
&\ge u_0(x)+\min\{\min\limits_{\bar\Omega}u_t(x,0),0\}\int^t_0{e^{-\frac{c_f}{2} s}}ds\\
&\ge u_0(x)+\frac{2\min\{\min\limits_{\bar\Omega}u_t(x,0),0\}}{c_f}.
\end{align*}
Hence \eqref{4.1} holds if we choose $M_0 = \frac{ \max\limits_{\bar\Omega}\varphi(x,0) }{ c_\varphi} + \max\limits_{\bar\Omega}|u_0(x)| +\frac{2\max\limits_{\bar\Omega}|u_t(x,0)|}{c_f}$.
\end{proof}

\begin{remark}
Due to the $C^0$ estimate of $u$ and $u_t$, we now have
\begin{align} \label{4.2}
\frac{\sigma_k(D^2u)}{\sigma_l(D^2 u)} = f(x,u)e^{u_t} \ge \min\limits_{\bar\Omega}f(x,-M_0) \cdot e^{-\max\limits_{\bar\Omega}|u_t(x,0)|}=:c_2>0.
\end{align}
\end{remark}

\section{$C^1$ estimates}

In this section, we prove the global gradient estimate as follows
\begin{theorem} \label{th5.1}
Suppose $\Omega \subset \mathbb{R}^n$ is a $C^3$ domain, and $u \in C^{3}(\overline \Omega \times [0, T))$ is a $k$-admissible solution of equation \eqref{1.1}, satisfying \eqref{1.2} and \eqref{1.3}. Then we have
\begin{align}\label{5.1}
\sup\limits_{\Omega \times [0, T)} |D u|  \leq M_1,
\end{align}
where $M_1$ depends on $n$, $k$, $l$, $\Omega$, $ |u|_{C^{0}}$, $ |u_t|_{C^{0}}$, $ |Du_0|_{C^{0}}$, $|\varphi|_{C^{3}}$, $\min f$, $\max f$ and $|D_x f|_{C^{0}}$.
\end{theorem}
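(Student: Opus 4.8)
The plan is to prove the estimate in two stages: first a global bound via an auxiliary function of the form $w = |Du|^2 e^{\phi(u)}$ or $w = |Du|^2 + \mu |x|^2$ maximized over $\overline\Omega \times [0,t]$, and then — since the maximum may occur on the parabolic boundary $\partial\Omega \times [0,T)$ — a separate near-boundary argument using the convexity of $\Omega$ and the structure of the Neumann condition. Concretely, I would fix $T' < T$, let $G(x,t) = \tfrac12 |Du|^2 \, e^{\psi(u)}$ with $\psi$ a function to be chosen (e.g. $\psi(u) = \alpha(u - \inf u)$ with $\alpha$ large, or involving $e^{-Ku}$), and consider the maximum of $G$ over $\overline\Omega \times [0,T']$. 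At an interior spatial maximum at time $t_0 > 0$, differentiate the evolution equation $u_t = \log(\sigma_k/\sigma_l) - \log f$: writing $F^{ij} = \partial \log(\sigma_k/\sigma_l)/\partial u_{ij}$, one gets from $\partial_t (D_\ell u) = F^{ij} D_\ell u_{ij} - (\log f)_{x_\ell} - (\log f)_u D_\ell u$ that $G$ satisfies a parabolic inequality $\partial_t G \le F^{ij} G_{ij} + (\text{lower order})$, and using $\sum_i F^{ii} \ge c > 0$ (this follows from the Newton–MacLaurin machinery in Section 2, since $\sigma_k/\sigma_l$ is bounded below by \eqref{4.2}) together with the choice of $\psi$ to absorb the bad first-order terms coming from $D_x f$ and $f_u$, one forces $|Du|(x_0,t_0)$ to be bounded. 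The time-zero case gives the $|Du_0|_{C^0}$ dependence.

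The genuinely delicate part is the boundary case: when $G$ attains its maximum at $(x_0,t_0)$ with $x_0 \in \partial\Omega$. Here I would follow the Lions–Trudinger–Urbas / Ma–Qiu / Chen–Zhang approach adapted to the parabolic setting. Extend $\nu$ to a smooth vector field in a tubular neighborhood of $\partial\Omega$; the Neumann condition $u_\nu = \varphi(x,u)$ differentiated tangentially, combined with the strict convexity of $\partial\Omega$ (which makes the second fundamental form positive definite), controls the tangential derivatives of $u$ on the boundary. The normal derivative is directly $|\varphi(x,u)|$, already bounded by the $C^0$ estimate of $u$ (Theorem 4.1) and $|\varphi|_{C^0}$. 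The standard trick is to instead bound the tangential gradient by applying the maximum principle to $w = \langle Du, \nu\rangle - \varphi(x,u) + (\text{correction})$, or to directly estimate $D_\nu G$ at $x_0$: using $G_\nu(x_0,t_0) \le 0$ at a boundary maximum, the boundary condition produces a term with a favorable sign precisely because of \eqref{1.2} ($\varphi_u < 0$) and the convexity of $\Omega$, yielding again an upper bound on $|Du|(x_0,t_0)$.

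I expect the main obstacle to be the bookkeeping in the boundary estimate: getting the sign of the boundary terms right requires carefully choosing the auxiliary function (the correction term must involve a multiple of the defining function $d(x) = \operatorname{dist}(x,\partial\Omega)$ and a large constant times $|Du|^2$ or $u$), and verifying that the strict convexity constant of $\Omega$ and the constant $c_\varphi$ from \eqref{1.2} combine to dominate the error terms generated when $F^{ij}$ hits the correction term. The interior estimate is comparatively routine once one has $\sum_i F^{ii} \ge c > 0$ from \eqref{4.2} and the concavity of $\log(\sigma_k/\sigma_l)$; the parabolic term $\partial_t G$ only helps (it has the right sign at a maximum in $t$). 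One subtlety worth flagging: the constant in \eqref{4.2} depends on $|u_t|_{C^0}$ and $|u|_{C^0}$, which is why $M_1$ is allowed to depend on those quantities, and the whole argument is consistent with the dependencies listed in the statement.
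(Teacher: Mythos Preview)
Your overall architecture --- an interior estimate via an auxiliary function of the form $\log|Du|^2 + \psi(u)$ and a near-boundary estimate in $\Omega_\mu$ with a correction involving $d(x)$ --- is exactly the paper's two-step scheme (Theorems~\ref{th5.2} and~\ref{th5.3}). But the interior step as you describe it has a genuine gap, and the same gap recurs in the near-boundary interior case.

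At the interior maximum, after using the first-order condition and the equation, the surviving coercive term is $\bigl[\psi'' - \tfrac12(\psi')^2\bigr]\,F^{11}\,u_1^{\,2}$ (in the additive-log formulation; cf.\ the paper's (5.12)). The issue is that this involves $F^{11}$, not $\sum_i F^{ii}$. Your stated ingredient ``$\sum_i F^{ii}\ge c>0$ from Newton--MacLaurin'' is correct but does \emph{not} bound $F^{11}$ from below: if $\lambda_1$ is large and positive, $F^{11}$ can be arbitrarily small while $\sum_i F^{ii}$ stays bounded away from zero. The paper closes this by choosing $\psi$ (namely $\psi(u)=(3M_0-u)^{-1/2}$, together with a localizing cutoff $\rho$) so that $u_{11}<0$ at the maximum (eq.~(5.11)), and then invoking Lemma~\ref{lem2.5}: when the $(1,1)$ entry is negative one has both $F^{11}\ge c_3\sum_i F^{ii}$ (eq.~(2.9)) and $\sum_i F^{ii}\ge c\,(-u_{11})^{k-l-1}\gtrsim u_1^{2(k-l-1)}$ (eq.~(2.10)). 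Together these give $F^{11}u_1^2\gtrsim u_1^{2(k-l)}$, which dominates every error term. Neither of your suggested choices $\psi(u)=\alpha(u-\inf u)$ (for which $\psi''-\tfrac12(\psi')^2=-\alpha^2/2<0$) nor a factor $e^{-Ku}$ (which forces $u_{11}>0$) produces the needed structure, and more to the point you never invoke Lemma~\ref{lem2.5}. The same lemma is the crux of Case~III of the near-boundary argument (eqs.~(5.39), (5.45)--(5.46)), so the omission is not local to the interior estimate. Concavity of $\log(\sigma_k/\sigma_l)$, which you mention, plays no role here --- there are no third-order terms to discard in a first-derivative estimate.

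Two smaller remarks on the boundary part. First, the paper's near-boundary auxiliary function is $G=\log|Dw|^2 + h(u) + \alpha_0 d$ with $w=u+\varphi(x,u)d$ (so that $w_\nu=0$ on $\partial\Omega$), not a barrier of the form $\langle Du,\nu\rangle-\varphi$; the latter is the device for the second-order double-normal estimates in Section~6. Second, in the boundary Hopf step (Case~II, eqs.~(5.25)--(5.28)) the favorable sign does not come from $\varphi_u<0$ but from choosing the constant $\alpha_0$ in $g(d)=\alpha_0 d$ large enough to beat the curvature terms.
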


To state our theorems, we denote $d(x) = \textrm{dist}(x, \partial \Omega)$, and $\Omega_{\mu} = \{ x \in \Omega| d(x) < \mu \}$ where $\mu$ is a small positive universal constant to be determined in Theorem \ref{th5.3}. In Subsection 5.1, we give the interior gradient estimate in $(\Omega \setminus \Omega_{\mu}) \times [0, T)$, and in Subsection 5.2 we establish the near boundary gradient estimate in $\Omega_{\mu} \times [0, T)$, following the idea of  Ma-Qiu \cite{MQ15}.

\subsection{Interior gradient estimate}

\begin{theorem} \label{th5.2}
Under the assumptions in Theorem \ref{th5.1}, then we have
\begin{align}\label{5.2}
\sup_{(\Omega \setminus \Omega_{\mu}) \times [0, T)} |D u|  \leq \widetilde{M_1},
\end{align}
where $\widetilde{M_1}$ depends on $n$, $k$, $l$, $\mu$, $M_0$, $ |D u_0|_{C^{0}}$, $|u_t|_{C^{0}}$, $\min f$, $\max f$ and $|D_x f|_{C^{0}}$.
\end{theorem}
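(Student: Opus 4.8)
The plan is to prove the interior gradient estimate in $(\Omega \setminus \Omega_\mu) \times [0,T)$ by the classical auxiliary-function / maximum-principle technique adapted to the parabolic setting. First I would introduce a cutoff: pick $\eta \in C^\infty_c(\Omega)$ with $\eta \equiv 1$ on $\Omega \setminus \Omega_\mu$, $0 \le \eta \le 1$, and $|D\eta|, |D^2\eta|$ controlled by $\mu$. Then consider the test function
\begin{align*}
\Phi(x,t) = \log |Du|^2 + \eta(x) \, \psi(u) + A\, d(x)^2 \quad \text{or simply} \quad \Phi(x,t) = \eta(x)^2 |Du|^2 e^{\phi(u)},
\end{align*}
where $\phi$ is a convex increasing function (e.g. $\phi(u) = -\beta u$ with $\beta$ large, using $|u| \le M_0$) chosen so that the bad terms are absorbed. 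Since $\eta$ has compact support in $\Omega$, any interior maximum of $\Phi$ over $\overline\Omega \times [0,t']$ for $t' < T$ is attained either at an interior spatial point or at $t=0$; the $t=0$ case is controlled by $|Du_0|_{C^0}$, so we may assume the maximum is interior in space and at some $t_0 > 0$.

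Next, at that maximum point $(x_0,t_0)$ I would rotate coordinates so that $D^2u$ is diagonal and assume $|Du|$ is large (otherwise we are done), so $u_1 = |Du|$ and $u_i = 0$ for $i > 1$. Differentiating $\log\Phi$ (or $\Phi$) and using $\Phi_t \ge F^{ij}\Phi_{ij}$ — which follows from the evolution equation $u_t = \log(\sigma_k/\sigma_l) - \log f$ by differentiating in $x_p$, multiplying by $u_p$, and summing — one gets a differential inequality. The linearized operator is $F^{ij} = \partial \log(\sigma_k/\sigma_l)/\partial u_{ij}$, which at the diagonal point has eigenvalues $F^{ii} = \frac{\sigma_{k-1}(\lambda|i)}{\sigma_k} - \frac{\sigma_{l-1}(\lambda|i)}{\sigma_l}$. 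The key structural inputs are: (a) $\sum_i F^{ii} \ge c > 0$ with $c$ depending only on $n,k,l$ and the lower bound $c_2$ from \eqref{4.2} (via Newton–MacLaurin and the ellipticity of the quotient in $\Gamma_k$); (b) the terms involving $D_x f$ and $f_u$ are linear in $|Du|$ and hence dominated after dividing by $|Du|^2$; (c) the third-order terms $F^{ij} u_{ij1}$ get handled by the standard Cauchy–Schwarz trick, absorbing $\frac{\sum (u_{1i1})^2}{|Du|^2}$ against the gradient of $\log\Phi$ vanishing at the max. The parabolic term $-u_t \cdot (\text{stuff})$ only helps or is bounded by $|u_t|_{C^0}$.

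The main obstacle I anticipate is precisely point (a) combined with handling the "good" negative term: in the Hessian \emph{quotient} case one does not have a clean lower bound like $\sum F^{ii} \ge 1$ that the pure $\sigma_k$ case enjoys after normalization; one must exploit Newton–MacLaurin (Proposition \ref{prop2.3}) together with the a priori bound $\sigma_k/\sigma_l = f e^{u_t} \in [c_2, C]$ from \eqref{4.2} to show $\sum_i F^{ii}$ is bounded below by a positive constant, and simultaneously extract enough of the concavity of $\log(\sigma_k/\sigma_l)$ to beat the positive $\phi''$-type contributions. Concretely, I expect to split into two cases depending on whether $\lambda_1$ (the largest Hessian eigenvalue, which after rotation aligns with the gradient direction) is comparable to $|Du|$ or not, and in the favorable case use that $F^{11}\lambda_1^2$ is large, while in the other case use $\sum F^{ii}$ lower bounds. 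Once the differential inequality at $(x_0,t_0)$ forces $|Du|(x_0,t_0) \le \widetilde{M_1}$, the cutoff gives the bound on $\Omega \setminus \Omega_\mu$, completing the proof.
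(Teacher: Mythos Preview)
Your outline has the right spirit but contains a genuine gap and a logical inconsistency that would prevent the argument from closing.

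\textbf{The rotation is inconsistent.} You write ``rotate coordinates so that $D^2u$ is diagonal \ldots\ so $u_1 = |Du|$ and $u_i = 0$ for $i>1$.'' In general you cannot do both: the gradient direction need not be an eigenvector of the Hessian. The paper's rotation is different and deliberate: at the interior maximum one rotates so that $u_1(x_1,t_1)=|Du|(x_1,t_1)$ and only the $(n-1)\times(n-1)$ block $\{u_{ij}\}_{2\le i,j\le n}$ is diagonal. This is always achievable and is exactly the hypothesis of Lemma~\ref{lem2.5}.

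\textbf{The missing structural input.} Your plan to control $F^{11}$ by a vague case split on whether $\lambda_1$ is comparable to $|Du|$ will not close. The actual mechanism is: the \emph{first-order condition} at the maximum of the auxiliary function (the paper uses $G=|Du|\,\psi(u)\,\rho(x)$ with $\psi(u)=(3M_0-u)^{-1/2}$ and $\rho=\mu^2-|x-x_0|^2$) forces
\[
\frac{u_{11}}{u_1}=-\Big(\frac{\psi'}{\psi}u_1+\frac{\rho_1}{\rho}\Big)\le -\frac{\psi'}{2\psi}\,u_1<0,
\]
so $u_{11}<0$ once $u_1$ is large. Lemma~\ref{lem2.5} (designed precisely for matrices with $a_{11}<0$ and the lower-right block diagonal) then gives \emph{simultaneously}
\[
F^{11}\ \ge\ c_3\sum_i F^{ii}\qquad\text{and}\qquad \sum_i F^{ii}\ \ge\ c\,(-u_{11})^{k-l-1}\ \ge\ c' u_1^{\,2(k-l-1)}.
\]
These two facts together produce a good term $\gtrsim F^{11}u_1^{2}\gtrsim u_1^{2(k-l)}$ that dominates all cutoff errors, the $|D_x f|$ term, and the bounded $u_t$ contribution. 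Your proposed lower bound $\sum_i F^{ii}\ge c>0$ from Newton--MacLaurin alone is correct but too weak: it does not single out $F^{11}$, and without the sign information $u_{11}<0$ there is no reason $F^{11}$ should be comparable to $\sum_i F^{ii}$ for a Hessian quotient.

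\textbf{A sign remark on your weight.} Your suggested weight $e^{\phi(u)}$ with $\phi(u)=-\beta u$ has $\phi'<0$; at the maximum this would push $u_{11}$ positive rather than negative, so even after fixing the rotation you would lose access to Lemma~\ref{lem2.5}. The paper's choice $\psi=(3M_0-u)^{-1/2}$ (increasing in $u$, with $\psi''\psi-2(\psi')^2>0$) is not incidental.

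In short: replace the full diagonalization by the partial one, use the first-order condition to get $u_{11}<0$, and invoke Lemma~\ref{lem2.5} in place of the ad hoc case split. That is the missing idea.
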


\begin{proof}
For any fixed point $(x_0, t_0) \in (\Omega \setminus \Omega_{\mu}) \times (0, T)$, we prove that $ |D u|(x_0, t_0)  \leq {\widetilde{M_1}}$, where $\widetilde{M_1}>0$ depends on $n$, $k$, $l$, $\mu$, $M_0$, $ |D u_0|_{C^{0}}$, $|u_t|_{C^{0}}$, $\min f$, $\max f$ and $|D_x f|_{C^{0}}$. It is easy to know that $B_{\mu} (x_0) \times [0, t_0] \subset \Omega \times [0, t_0]$, and we consider the auxiliary function
\begin{align} \label{5.3}
G(x,t) = |D u|\psi (u)\rho (x)
\end{align}
in $B_{\mu} (x_0) \times [0, t_0]$, where $\rho  = \mu^2 - |x - x_0|^2 $, and $\psi (u) = (3 M_0- u)^{ - \frac{1}{2}}$.
Then $G(x,t)$ attains maximum at some point $(x_1, t_1)\in B_{\mu} (x_0) \times [0, t_0]$.
If $t_1 =0$, then $|D u(x_1, t_1)| = |D u_0(x_1)|$. It is easy to obtain the estimate \eqref{5.2}.

In the following, we assume $t_1 \in (0, t_0]$. By rotating the coordinate $(x_1, \cdots, x_{n})$, we can assume
\begin{align}\label{5.4}
u_1(x_1, t_1) = |D u|(x_1, t_1) >0,  \quad \{ u_{ij} \}_{2 \leq i, j \leq n} \text{ is diagonal}.
\end{align}
Then
\begin{align} \label{5.5}
 \phi (x,t)  = \log u_1 (x,t) + \log \psi (u) + \log \rho
\end{align}
attains local maximum at the point $(x_1, t_1)\in B_{\mu} (x_0) \times [0, t_0]$.
Denote $\widetilde \lambda = (\widetilde \lambda_2, \cdots, \widetilde\lambda_{n})=(u_{22}(x_1, t_1), \cdots, u_{nn}(x_1, t_1))$, and all the calculations are at $(x_1, t_1)$. So we have at $(x_1, t_1)$,
\begin{align}
 \label{5.6}0 =& \phi _i  = \frac{{u_{1i} }}{{u_1 }} + \frac{{\psi _i }}{\psi } + \frac{{\rho _i }}{\rho }, \\
 \label{5.7}0 \leq& \phi _t  = \frac{{u_{1t} }}{{u_1 }} + \frac{{\psi _t}}{\psi }.
 \end{align}
Hence
\begin{align}\label{5.8}
 \frac{{u_{11} }}{{u_1 }} =  - (\frac{{\psi _1 }}{\psi } + \frac{{\rho _1 }}{\rho }) =  - (\frac{{\psi '}}{\psi }u_1  + \frac{{\rho _1 }}{\rho }).
 \end{align}

In the following, we always assume
\begin{align}\label{5.9}
 |D u (x_0, t_0)| \ge 32 \sqrt{2}\frac{{M_0}}{\mu}.
\end{align}
Otherwise there is nothing to prove. Then we have
\begin{align}\label{5.10}
u_1 (x_1, t_1 )\rho (x_1 ) \ge& \frac{{\psi (u)(x_0, t_0)}}{{\psi (u(x_1, t_1))}}|D u|(x_0, t_0)\rho (x_0)
\ge \sqrt{\frac{1}{2}} \cdot 32 \sqrt{2} \frac{{M_0}}{\mu} \cdot \mu^2  = 2 \cdot 8 M_0 \cdot 2\mu \notag\\
\ge& 2\frac{\psi }{{\psi '}}|\rho _1 |,
\end{align}
so
\begin{align} \label{5.11}
 \frac{{u_{11} }}{{u_1 }} =  - (\frac{{\psi '}}{\psi }u_1  + \frac{{\rho _1 }}{\rho }) =  - \frac{{\psi '}}{{2\psi }}u_1  - \frac{{\psi 'u_1 \rho  + 2\psi \rho _1 }}{{2\psi \rho }} \le  - \frac{{\psi '}}{{2\psi }}u_1.
\end{align}

Denote $F^{ij} = \frac{\partial \log \big(\frac{\sigma _k (D^2 u)}{\sigma _l (D^2 u)}\big)} {{\partial u_{ij} }}$, and we have
\begin{align}\label{5.12}
0 \ge F^{ij} \phi _{ij} -\phi _{t}  =& F^{ij} [\frac{{u_{1ij} }}{{u_1 }} - \frac{{u_{1i} u_{1j} }}{{u_1 ^2 }} + \frac{{\psi _{ij} }}{\psi } - \frac{{\psi _i \psi _j }}{{\psi ^2 }} + \frac{{\rho _{ij} }}{\rho } - \frac{{\rho _i \rho _j }}{{\rho ^2 }}] - [\frac{{u_{1t} }}{{u_1 }} + \frac{{\psi _t}}{\psi }]\notag \\
=& \frac{1}{{u_1 }}\frac{f_{1 }+ f_u u_1}{f} - F^{ij} \frac{{u_{1i} u_{1j} }}{{u_1 ^2 }} + [\frac{{\psi ''}}{\psi } - \frac{{\psi '^2 }}{{\psi ^2 }}]F^{ij} u_i u_j  + \frac{{\psi '}}{\psi }[(k-l)-u_t] \notag \\
 &+ F^{ij} \frac{{\rho _{ij} }}{\rho } - F^{ij} \frac{{\rho _i \rho _j }}{{\rho ^2 }} \notag\\
=& \frac{1}{{u_1 }}\frac{f_{1 }+ f_u u_1}{f} + [\frac{{\psi ''}}{\psi } - 2\frac{{\psi '^2 }}{{\psi ^2 }}]F^{11} u_1 ^2  + \frac{{\psi '}}{\psi }[(k-l)-u_t] \notag\\
&- \sum\limits_i {F^{ii} } \frac{2}{\rho } - F^{ij} [\frac{{\psi _i }}{\psi }\frac{{\rho _j }}{\rho } + \frac{{\rho _i }}{\rho }\frac{{\psi _j }}{\psi }] - 2F^{ij} \frac{{\rho _i \rho _j }}{{\rho ^2 }} \notag\\
\ge& -\frac{|f_1/f|}{{u_1 }} + [\frac{{\psi ''}}{\psi } - 2\frac{{\psi '^2 }}{{\psi ^2 }}]F^{11} u_1 ^2  + \frac{{\psi '}}{\psi }[(k-l)-u_t] \notag\\
&- \sum\limits_i {F^{ii} } \frac{2}{\rho } -2 \sum\limits_i {F^{ii} } \frac{{\psi '  }}{\psi }u_1\frac{{ |D \rho|}}{\rho } - 2 \sum\limits_i {F^{ii} } \frac{{ | D \rho |^2}}{{\rho ^2 }} \notag\\
\ge& -\frac{|f_1/f|}{{u_1 }} + \frac{1}{64 M_0^{2}} F^{11} u_1 ^2  + \frac{{\psi '}}{\psi }[(k-l)-u_t]  \notag \\
 & -  2\sum\limits_i {F^{ii} } \Big[ \frac{1}{\rho }+ \frac{1}{ 4 M_0 }u_1\frac{{2 \mu}}{\rho } +  \frac{{4 \mu^2 }}{{\rho ^2 }}\Big].
\end{align}
From Lemma \ref{lem2.5}, we know
\begin{align}\label{5.13}
F^{11} \geq c_3 \sum {F^{ii} },
\end{align}
where $c_3 =\frac{{n(k - l)}}{{k(n - l)}}\frac{1}{{n - k + 1}}$. Moreover, from \eqref{2.10} and \eqref{5.11}, we have
\begin{align}\label{5.14}
\sum {F^{ii} } = \sum {\frac{1}{\frac{\sigma _k (D^2 u)}{\sigma _l (D^2 u)}} \frac{\partial [ \frac{\sigma _k (D^2 u)}{\sigma _l (D^2 u)}]} {{\partial u_{ii} }}}=& \frac{1}{f e^{u_t}} \sum { \frac{\partial [ \frac{\sigma _k (D^2 u)}{\sigma _l (D^2 u)}]} {{\partial u_{ii} }}} \notag \\
\geq& \frac{1}{f e^{u_t}} \frac{{k - l}}{k} \frac{1}{C_n^l} (-u_{11})^{k-l-1} \notag \\
\geq& c_4 u_{1}^{2(k-l-1)}.
\end{align}
Then we can get
\begin{align}\label{5.15}
0 \ge F^{ij} \phi _{ij} -\phi _{t}  \ge& \frac{1}{128 M_0^{2}} c_3 c_4 u_{1}^{2(k-l)} - \frac{ |f_{1 }/f|}{{u_1 }} + \frac{{\psi '}}{\psi }[(k-l)-u_t]   \notag \\
 & + \sum\limits_i {F^{ii} } \Big[\frac{1}{128 M_0^{2}}  c_3 u_1 ^2  -2\big(\frac{1}{\rho }+ \frac{1}{ 4 M_0 }u_1\frac{{2 \mu}}{\rho } +  \frac{{4 \mu^2 }}{{\rho ^2 }}\big)\Big].
\end{align}
This yields
\begin{align}\label{5.16}
\rho (x_1 )u_1 (x_1, t_1)  \leq C,
\end{align}
where $C$ depends on $n$, $k$, $l$, $\mu$, $M_0$, $|u_t|_{C^{0}}$, $\min f$, $\max f$ and $|D_x f|_{C^{0}}$. Hence we can get
\begin{align}
|D u (x_0, t_0)| \leq \frac{{\psi (u)(x_1, t_1)}}{{\psi (u)(x_0, t_0)}}  \frac{1}{\rho (x_0)} \rho (x_1 )u_1 (x_1, t_1) \leq \widetilde{M_1}. \notag
\end{align}
From this, the proof is complete.
\end{proof}

\subsection{ Near boundary gradient estimate}

\begin{theorem} \label{th5.3}
Under the assumptions in Theorem \ref{th5.1}, then there exists a positive universal constant $\mu$ such that
\begin{align}\label{5.17}
\sup_{\Omega_{\mu}\times [0, T)} |D u|  \leq \max\{\widetilde{M_1}, \widehat{M_1}\},
\end{align}
where $\widetilde{M_1}$ is the constant in Theorem \ref{th5.2}, and $\widehat{M_1}$ depends on $n$, $k$, $l$, $\mu$, $\Omega$, $M_0$, $ |Du_0|_{C^{0}}$, $|u_t|_{C^{0}}$, $\min\limits_{\Omega} f$, $\max f$, $|D_x f|_{C^{0}}$ and $|\varphi|_{C^3}$.
\end{theorem}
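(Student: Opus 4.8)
The plan is to prove the near boundary gradient estimate by constructing an auxiliary function on $\Omega_\mu$ that combines the tangential gradient, the normal derivative, and a suitable barrier built from the distance function $d(x)$, following the strategy of Ma--Qiu \cite{MQ15}. Concretely, I would first straighten the boundary: near a boundary point, use the fact that $\Omega$ is strictly convex and has smooth boundary so that $d(x)$ is smooth in $\Omega_\mu$ for $\mu$ small, and $-D^2 d \ge c_0 I$ in the tangential directions in the sense of the second fundamental form. The Neumann condition $u_\nu = \varphi(x,u)$ should be rewritten using $\nu = -Dd$ on $\partial\Omega$, giving $\langle Du, Dd\rangle = -\varphi(x,u)$ on the boundary, which lets one control the normal component of $Du$ in terms of $|u|_{C^0}$ and $|\varphi|_{C^1}$; the real content is bounding the \emph{tangential} gradient.

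For the tangential part, I would consider an auxiliary function of the form $\phi = \log w + \alpha |Du|^2 + \beta d$ (or a variant with $\log|Du|$ in place of $|Du|^2$), where $w$ is a tangential-derivative-type quantity such as $w = \langle Du, \tau\rangle$ for a tangential vector field $\tau$ extended off the boundary, or more robustly $w = |Du|^2 - \langle Du, Dd\rangle^2$ plus correction terms involving $\varphi$, and where $\alpha, \beta$ are large constants to be chosen. The point of the $\beta d$ term is that on $\partial\Omega$ we can use the Neumann condition together with the strict convexity of $\Omega$: differentiating the boundary condition tangentially produces a good (negative, because $\varphi_u < 0$ by \eqref{1.2}) term, and the convexity of $\partial\Omega$ contributes another favorable term, so that the boundary derivative $\phi_\nu$ has a definite sign, forcing the maximum of $\phi$ to occur either in the interior of $\Omega_\mu$ or on the inner boundary $\{d = \mu\}$. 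On $\{d=\mu\}$ the interior estimate Theorem \ref{th5.2} already gives the bound $\widetilde{M_1}$, so one only needs to handle an interior maximum of $\phi$ in $\Omega_\mu \times (0,T]$, or a maximum at $t=0$ (which is controlled by $|Du_0|_{C^0}$).

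At an interior maximum $(x_1,t_1)$ with $t_1 > 0$, the plan is to run the standard computation: from $\phi_i = 0$ solve for the bad first-derivative terms, and from $0 \le \phi_t - F^{ij}\phi_{ij}$ extract the differential inequality, using the concavity of $\log(\sigma_k/\sigma_l)$, the positivity $\sum F^{ii} \ge c_4 u_1^{2(k-l-1)}$-type lower bounds from \eqref{2.10}, and the key Lemma \ref{lem2.5} inequality \eqref{2.9}--\eqref{2.10} to absorb third-order terms (which is exactly why the lemma is flagged as the key to the gradient estimates in the Remark after Lemma \ref{lem2.6}). Choosing $\mu$ small and $\alpha$, $\beta$ in the right order — $\beta \gg \alpha \gg 1$ — the good terms from $\alpha|Du|^2$ (via $\beta d$'s Hessian and the $\sum F^{ii}$ factor) dominate the gradient-cubic bad terms, yielding $|Du|(x_1,t_1) \le \widehat{M_1}$. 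The main obstacle I anticipate is the boundary term analysis: ensuring the coefficients are arranged so that $\phi_\nu \le 0$ on $\partial\Omega$ robustly for the quotient operator (not just Monge--Amp\`ere), which requires carefully exploiting $\varphi_u \le c_\varphi < 0$ and the strict convexity of $\Omega$ simultaneously, and making sure the off-boundary extension of the tangential frame does not introduce uncontrolled error terms of the wrong order in $|Du|$.
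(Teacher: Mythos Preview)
Your overall architecture --- the Ma--Qiu auxiliary function, the three-way split into inner boundary $\{d=\mu\}$, outer boundary $\partial\Omega$, and interior maximum, with Lemma~\ref{lem2.5} doing the heavy lifting at the interior max --- matches the paper exactly. The specific test function and the mechanism at the interior maximum, however, differ in a way that leaves a gap in your sketch.

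The paper takes
\[
G(x,t) = \log|Dw|^2 + h(u) + \alpha_0 d, \qquad w = u + \varphi(x,u)\,d(x), \qquad h(u) = -\log(1+M_0-u).
\]
The choice $w = u+\varphi d$ is engineered so that $w_\nu = u_\nu - \varphi = 0$ on $\partial\Omega$; this is a clean realization of your ``tangential gradient plus $\varphi$-corrections'' idea. The piece you are missing is the $h(u)$ term in place of your $\alpha|Du|^2$. Its role is twofold. First, the first-order condition $G_1=0$ at an interior maximum, together with $h'>0$, forces
\[
u_{11} \le -\tfrac{h'}{2(1+\varphi_u d)}\,u_1 w_1 + (\text{lower order}) < 0
\]
once the gradient is large (see \eqref{5.39}); this negativity of $u_{11}$ is precisely the hypothesis $a_{11}<0$ in Lemma~\ref{lem2.5}, without which \eqref{2.9}--\eqref{2.10} are unavailable. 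Second, the specific choice of $h$ gives $h''-\tfrac12 h'^2 = \tfrac{1}{2(1+M_0-u)^2}>0$, so $(h''-\tfrac12 h'^2)F^{11}u_1^2$ is the dominating good term; combined with $F^{11}\ge c_3\sum_i F^{ii}$ from Lemma~\ref{lem2.5} and $\sum_i F^{ii}\gtrsim (-u_{11})^{k-l-1}$ from \eqref{2.10}, this closes the estimate. Your $\alpha|Du|^2$ contributes $2\alpha F^{ij}u_{ik}u_{jk}\ge 0$, which neither forces $u_{11}<0$ nor produces a term of order $|Du|^2\sum_i F^{ii}$; as written you have not secured the hypothesis of Lemma~\ref{lem2.5}, so the appeal to it is unjustified.

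On the boundary case: it is simpler than you anticipate. The paper uses neither $\varphi_u<0$ nor the strict convexity of $\partial\Omega$ here. One computes $G_n \ge -2C_1 - |h'|\max|\varphi| + \alpha_0$ at a putative boundary maximum and chooses $\alpha_0$ large to contradict Hopf. Strict convexity and the sign of $\varphi_u$ enter only in the $C^2$ estimates of Section~6, not in the gradient bound.
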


\begin{proof}
The proof follows the idea of Ma-Qiu \cite{MQ15}.

Since $\Omega$ is a $C^3$ domain, it is well known that there exists a small positive universal constant $0<{\mu} < \frac{1}{10}$ such that
$d(x) \in C^3(\overline{\Omega_{\mu}})$. As in Simon-Spruck \cite{SS76} or Lieberman \cite{L13} (in page 331), we can
extend $\nu$ by $\nu = - D d$ in $\Omega_{\mu}$ and note that $\nu$ is a $C^2(\overline{\Omega_{\mu}})$ vector field. As mentioned
in the book \cite{L13}, we also have the following formulas
\begin{align}
\label{5.18}&|D \nu| + |D^2 \nu| \leq C_0, \quad \text{in} \quad \overline{\Omega_{ \mu}}, \\
\label{5.19}&\sum_{i=1}^n \nu^i D_j \nu^i =0, \quad \sum_{i=1}^n \nu^i D_i \nu^j =0, \quad |\nu| =1,  \quad \text{in} \quad \overline{\Omega_{\mu}},
\end{align}
where $C_0$ is depending only on $n$ and $\Omega$. As in \cite{L13}, we define
\begin{align}\label{5.20}
c^{ij} = \delta_{ij} - \nu^i \nu^j,  \quad \text{in} \quad \overline{\Omega_{ \mu}},
\end{align}
and for a vector $\zeta \in \mathbb{R}^n$, we write $\zeta'$ for the vector with $i$-th component $\sum_{j=1}^n c^{ij} \zeta^j$.
Then we have
\begin{align}\label{5.21}
|(D u)'|^2 = \sum_{i,j=1}^n c^{ij} u_i u_j,  \quad \text{and} \quad |D u|^2 =|(D u)'|^2 + u_\nu ^2.
\end{align}

We consider the auxiliary function
\begin{align} \label{5.22}
G(x,t)  = \log |D w|^2  +h(u) + g(d),
\end{align}
where
\begin{align*}
&w(x,t) = u(x,t) + \varphi(x, u) d(x),  \\
&h(u) = -\log(1+M_0 -u),\\
&g(d)= \alpha_0 d,
\end{align*}
with $\alpha_0 > 0$ to be determined later. Note that here $\varphi \in C^3(\overline \Omega)$ is an extension with universal $C^3$ norms.

For any fixed $T_0 \in (0, T)$, it is easy to know $G(x,t)$ is well-defined in $\overline{\Omega_{\mu}} \times [0, T_0]$. Then we assume that $G(x,t)$ attains its maximum at a point $(x_0, t_0) \in \overline{\Omega_{\mu}} \times [0, T_0]$. If $t_0 =0$, then $|D u(x_0, t_0)| = |D u_0(x_0)|$. It is easy to obtain the estimate \eqref{5.17}.

In the following, we assume $t_0 \in (0, T_0]$ and $|D u|(x_0, t_0) > 10n[|\varphi|_{C^1} + |u|_{C^0}]$. Then we have
\begin{align}\label{5.23}
 \frac{1}{2} |D u(x_0, t_0)|^2 \leq |D w(x_0, t_0)|^2 \leq 2 |D u(x_0, t_0)|^2,
 \end{align}
since $w_i = u_i + D_i \varphi d + \varphi d_i$ and $d$ is sufficiently small. Now we divide into three cases to complete the proof of Theorem \ref{th5.3}.

$\blacklozenge$ CASE I: $x_0 \in \partial \Omega_\mu \cap \Omega$.

Then $x_0 \in \Omega \setminus \Omega_\mu$, and  we can get from the interior gradient estimate (i.e. Theorem \ref{th5.2}),
\begin{align}\label{5.24}
|D u| (x_0, t_0) \leq \sup_{(\Omega \setminus \Omega_{\mu})\times [0, T)} |D u|  \leq \widetilde{M_1},
\end{align}
then we can prove \eqref{5.17}.

$\blacklozenge$ CASE II: $x_0 \in \partial \Omega$.

At $(x_0, t_0)$, we have $d=0$. We may assume $x_0=0$. We choose the coordinate such that  $\partial\Omega$ is locally represented by $\partial\Omega=(x',\rho(x'))$, where $\rho(x')$ is a $C^3$ function with $\rho_i(0)=0$ for $1\le i\le n-1$. Thus $\nu(x_0)=(0,\cdots,0,-1)$. Rotating the $x'$-coordinate further, we can assume $w_1(x_0,t_0)=|Dw|(x_0,t_0)$. Therefore we have
\[
w_n(x_0, t_0)  = u_n   +D_n \varphi d + \varphi d_n  = u_n + \varphi  = 0
\]
By Hopf lemma, we have
\begin{align}\label{5.25}
 0 \ge  - G_\nu  (x_0, t_0) = G_n (x_0, t_0) =& \frac{2w_k w_{kn} }{|Dw|^2} + h'u_n  + g' \notag\\
  =& \frac{2w_{1n}}{{w_1} } - h'\varphi  + g'.
\end{align}
Differentiate the Neumann boundary condition along its tangential direction $e_1$ at $(x_0, t_0)$,
\begin{align}\label{5.26}
- u_{n1}  + u_i {\nu ^i} _{,1}  = u_{i1} \nu ^i  + u_i {\nu ^i} _{,1}  = D_1 \varphi.
\end{align}
Since
\begin{align}\label{5.27}
 w_{1n}  =& u_{1n}  + \varphi d_{1n} +  D_1 \varphi d_n + D_n \varphi d_{1}\notag \\
  =& u_{1n}  + \varphi d_{1n} +  D_1 \varphi.
\end{align}
Hence
\begin{align}\label{5.28}
w_{1n}  =&  u_i {\nu ^i} _{,1}  + \varphi d_{1n}  \notag  \\
\geq& - |D\nu| |Du| - |D^2 d| |\varphi|  \notag   \\
\ge&-C_1 w_1,
\end{align}
where $C_1 =: 4 |D\nu|_{C^0} + 2 |D^2 d|_{C^0}$. Inserting \eqref{5.28} into \eqref{5.25}, we have
\begin{align*}
0 \ge -2C_1-\max\limits_{\Omega}|\varphi| + \alpha_0,
\end{align*}
which is a contradiction if we choose $\alpha_0> 2 C_1 + \max\limits_{\Omega}|\varphi|$.

$\blacklozenge$ CASE III: $x_0 \in \Omega_\mu$.

At $(x_0, t_0)$, we have $0 < d(x_0) < \mu$, and by rotating the coordinate $\{e_1, \cdots, e_{n}\}$, we can assume
\begin{align}\label{5.29}
w_1(x_0, t_0) = |D w|(x_0, t_0) >0,  \quad \{ u_{ij}(x_0, t_0) \}_{2 \leq i, j \leq n} \text{ is diagonal}.
\end{align}
In the following, we denote $\widetilde \lambda = (\widetilde \lambda_2, \cdots, \widetilde\lambda_{n})=(u_{22}(x_0, t_0), \cdots, u_{nn}(x_0, t_0))$, and all the calculations are at $(x_0, t_0)$. So from the definition of $w$, we know $w_i  = u_i  + \varphi d_i  + (\varphi _i +\varphi_u  u_i)d$, and we get
\begin{align}
\label{5.30}& u_1  = \frac{{w_1  - \varphi d_1  - \varphi _1 d}}{{1 + \varphi_u d}} >0, \\
\label{5.31}& u_i  = \frac{{ - \varphi d_i  - \varphi _i d }}{{1 +\varphi_u d}}, \quad  i \ge 2.
\end{align}
By the assumption $|D u|(x_0, t_0) > 10n[|\varphi|_{C^0} + |D_x \varphi|_{C^0}]$ and $d$ sufficiently small, we know for $i \ge 2$
\begin{align}\label{5.32}
|u_i|\leq \frac{{ |\varphi | + |\varphi _i|}}{{1 +\varphi_u d}} \leq \frac{1}{9n}|D u|(x_0, t_0) ,
\end{align}
hence
\begin{align}\label{5.33}
u_1  = \sqrt{|D u|^2 - \sum_{i=2}^n u_i^2} \geq \frac{1}{2} |D u| \geq \frac{1}{4} w_1.
\end{align}

Also we have at $(x_0, t_0)$,
\begin{align}
\label{5.34} 0 =& G_i  = \frac{{(|D w|^2 )_i }}{{|D w|^2 }} + h'u_i  + \alpha _0 d_i, \quad i = 1, 2, \cdots, n, \\
\label{5.35} 0 \geq& G_t  = \frac{{(|D w|^2 )_t }}{{|D w|^2 }} + h'u_t.
\end{align}
hence
\begin{align}\label{5.36}
\frac{{2w_{1i} }}{{w_1 }} =  - [h'u_i  + \alpha _0 d_i ], \quad i = 1, 2, \cdots, n.
\end{align}

From the definition of $w$, we know
\begin{align}
\label{5.37} w_{1i}  =& (1 +\varphi_u d)u_{1i}  + \varphi d_{1i}  + (\varphi _{1i}+\varphi _{1u} u_i+\varphi _{iu} u_1+ \varphi _{uu} u_1 u_i) d  \notag \\
  &+ (\varphi _1 + \varphi _{u} u_1 ) d_i  + (\varphi _i  +\varphi _{u} u_i )d_1,   \quad i =1, \cdots, n; \\
\label{5.38} w_{1t}  =& (1 +\varphi_u d)u_{1t}  + (\varphi _{1u} u_t+ \varphi _{uu} u_1 u_t)d +\varphi _{u} u_t d_1.
\end{align}
So we have
\begin{align}\label{5.39}
u_{11}  =& \frac{{w_{11} }}{{1 +\varphi_u d}} - \frac{{(\varphi _{11}+2\varphi _{1u} u_1+ \varphi _{uu} u_1^2) d + \varphi d_{11} + 2(\varphi _1 + \varphi _{u} u_1 ) d_1 }}{{1 +\varphi_u d}}  \notag \\
=& \frac{{ - [h'u_1  + \alpha _0 d_1 ]w_1 }}{{2(1  +\varphi_u d)}}- \frac{{ \varphi _{uu}d}}{{1 +\varphi_u d}} u_1^2 - \frac{{ 2\varphi _{1u} d+  2\varphi _{u} d_1 }}{{1 +\varphi_u d}} u_1 - \frac{{\varphi _{11}d + \varphi d_{11} + 2\varphi _1  d_1 }}{{1 +\varphi_u d}}  \notag \\
\le& \frac{{ - h'}}{{2(1 +\varphi_u d)}}u_1 w_1 + \frac{{ |\varphi _{uu}|d}}{{1 +\varphi_u d}} u_1^2 \notag \\
&+ \frac{{\alpha _0 }}{{2(1 +\varphi_u d)}}w_1 + \frac{{ 2|\varphi _{1u}| d+  2|\varphi _{u} ||d_1| }}{{1 +\varphi_u d}} u_1 + \frac{{|\varphi _{11}|d + \varphi |d_{11}| + 2|\varphi _1 || d_1| }}{{1 +\varphi_u d}}  \notag \\
\le& - \frac{1}{{16(1 + 2M_0 )}}w_1 ^2  < 0,
\end{align}
since $w_1$ is sufficiently large and $d$ is sufficiently small. Moreover, for $i =1, \cdots, n$, we can get
\begin{align}\label{5.40}
|u_{1i} | =& \Big|\frac{{w_{1i} }}{{1 +\varphi_u d}} - \frac{{(\varphi _{1i}+\varphi _{1u} u_i+\varphi _{iu} u_1+ \varphi _{uu} u_1 u_i)  d + \varphi d_{1i} + D_1\varphi d_i + D_i\varphi d_1}}{{1 +\varphi_u d}}\Big| \notag \\
=& \Big|\frac{{ - [h'u_i  + \alpha _0 d_i ]w_1 }}{{2(1  + \varphi _{u} d)}}  - \frac{{(\varphi _{1i}+\varphi _{1u} u_i+\varphi _{iu} u_1+ \varphi _{uu} u_1 u_i) d + \varphi d_{1i} + D_1\varphi d_i + D_i\varphi d_1}}{{1 +\varphi_u d}} \Big|  \notag \\
\le& C_2 w_1 ^2.
\end{align}

Denote $F^{ij}  = \frac{{\partial [ \log \frac{{\sigma _k (D^2 u )}}{{\sigma _l (D^2 u )}}]}}{{\partial u_{ij} }}$. Then we have
\begin{align}
G_{ij}  = \frac{{(|D w|^2 )_{ij} }}{{|D w|^2 }} - \frac{{(|D w|^2 )_i }}{{|D w|^2 }}\frac{{(|D w|^2 )_j }}{{|D w|^2 }} + h'u_{ij}  + h''u_i u_j  + \alpha _0 d_{ij}, \notag
\end{align}
and
\begin{align}\label{5.41}
0 \ge& \sum\limits_{ij = 1}^n {F^{ij} G_{ij} } - G_t = \frac{{F^{ij} (|D w|^2 )_{ij} }}{{|D w|^2 }} - F^{ij} \frac{{(|D w|^2 )_i }}{{|D w|^2 }}\frac{{(|D w|^2 )_j }}{{|D w|^2 }}  - \frac{{(|D w|^2 )_t }}{{|D w|^2 }}\notag \\
&\qquad \qquad \qquad \qquad \quad + F^{ij} [h'u_{ij}  + h''u_i u_j  + \alpha _0 d_{ij} ]-  h'u_t \notag \\
=& \frac{{2F^{ij} [\sum\limits_{p = 2}^n {w_{pi} w_{pj} }  + w_{1i} w_{1j}  + w_1 w_{1ij} ]}}{{w_1 ^2 }} - F^{ij} \frac{{2w_{1i} }}{{w_1 }}\frac{{2w_{1j} }}{{w_1 }} -\frac{{2w_{1t} }}{{w_1 }} \notag \\
&+ F^{ij} [h'u_{ij}  + h''u_i u_j  + \alpha _0 d_{ij} ]-  h'u_t \notag \\
\ge& \frac{{2F^{ij} w_{1ij} }}{{w_1 }} -\frac{{2w_{1t} }}{{w_1 }}- \frac{1}{2}F^{ij} \frac{{2w_{1i} }}{{w_1 }}\frac{{2w_{1j} }}{{w_1 }}+ F^{ij} [h'u_{ij}  + h''u_i u_j  + \alpha _0 d_{ij} ] -  h'u_t\notag \\
=& \frac{{2F^{ij} w_{1ij} }}{{w_1 }} -\frac{{2w_{1t} }}{{w_1 }} - \frac{1}{2}F^{ij} [h'u_i  + \alpha _0 d_i ][h'u_j  + \alpha _0 d_j ] \notag \\
&+ h'[(k-l)-u_t] + F^{ij} [h''u_i u_j  + \alpha _0 d_{ij} ] \notag  \\
\ge& \frac{{2F^{ij} w_{1ij} }}{{w_1 }} -\frac{{2w_{1t} }}{{w_1 }}+ h'[(k-l)-u_t] \notag \\
&+ F^{ij} [(h'' - \frac{1}{2}h'^2 )u_i u_j  - \alpha _0 h'd_i u_j  + \alpha _0 d_{ij}  - \frac{1}{2}\alpha _0 d_i d_j ].
\end{align}
It is easy to know
\begin{align}\label{5.42}
 &F^{ij} [(h'' - \frac{1}{2}h'^2 )u_i u_j  - \alpha _0 h'd_i u_j  + \alpha _0 d_{ij}  - \frac{1}{2}\alpha _0 d_i d_j ]  \notag \\
\ge& \frac{1}{{2(1 + 2M_0 )}}[F^{11} u_1 ^2  -2 \sum\limits_{i=2}^n |F^{1i} u_1 u_i |]  - \alpha _0 h'|D u|\sum\limits_{i = 1}^n {F^{ii} }  - \alpha _0 ( |D^2 d|  + 1)\sum\limits_{i = 1}^n {F^{ii} }  \notag\\
\ge& \frac{1}{{32(1 + 2M_0 )}}F^{11} w_1 ^2  - C_3 w_1 \sum\limits_{i = 1}^n {F^{ii} }.
\end{align}
From the definition of $w$, we know
\begin{align}\label{5.43}
\frac{{2F^{ij} w_{1ij} }}{{w_1 }}-\frac{{2w_{1t} }}{{w_1 }}
=& \frac{2}{{w_1 }}[(1 +\varphi_u d) F^{ij} u_{ij1}  + (\varphi _{1u} F^{ij} u_{ij}+ \varphi _{uu} u_1 F^{ij} u_{ij})d +\varphi _{u} F^{ij} u_{ij} d_1  ] \notag \\
&-\frac{{2}}{{w_1 }}[(1 +\varphi_u d)u_{1t}  + (\varphi _{1u} u_t+ \varphi _{uu} u_1 u_t)d +\varphi _{u} u_t d_1] \notag \\
&+ \frac{2}{{w_1 }}F^{ij} [d \varphi _{uuu}u_{i}u_{j} u_1  + O(w_1^2)]  \notag  \\
\ge&  -C_4 d  F^{11} {w_1 }^2 - C_4 w_1 \sum\limits_{i = 1}^n {F^{ii} }-C_4.
\end{align}

From \eqref{5.41}, \eqref{5.42} and \eqref{5.43}, we get
\begin{align}\label{5.44}
0 \ge& \sum\limits_{ij = 1}^n {F^{ij} G_{ij} } - G_t \notag \\
\geq& \big[\frac{1}{{32(1 + 2M_0 )}} - C_4 d \big] F^{11} w_1 ^2  - (C_3+C_4)w_1 \sum\limits_{i = 1}^n {F^{ii} } -C_4.
\end{align}

From Lemma \ref{lem2.5}, we know
\begin{align}\label{5.45}
F^{11} \geq c_3 \sum {F^{ii} },
\end{align}
where $c_3 =\frac{{n(k - l)}}{{k(n - l)}}\frac{1}{{n - k + 1}}$. Moreover, from \eqref{2.10} and \eqref{5.39}, we have
\begin{align}\label{5.46}
\sum {F^{ii} } = \sum {\frac{1}{\frac{\sigma _k (D^2 u)}{\sigma _l (D^2 u)}} \frac{\partial [ \frac{\sigma _k (D^2 u)}{\sigma _l (D^2 u)}]} {{\partial u_{ii} }}}=& \frac{1}{f e^{u_t}} \sum { \frac{\partial [ \frac{\sigma _k (D^2 u)}{\sigma _l (D^2 u)}]} {{\partial u_{ii} }}} \notag \\
\geq& \frac{1}{f e^{u_t}} \frac{{k - l}}{k} \frac{1}{C_n^l} (-u_{11})^{k-l-1} \notag \\
\geq& c_5 w_{1}^{2(k-l-1)}.
\end{align}
Then we can get from \eqref{5.44}, \eqref{5.45} and \eqref{5.46}
\begin{align}
 w_1 (x_0, t_0) \leq& C_5. \notag
\end{align}
So we can prove \eqref{5.17}.
\end{proof}

\section{$C^2$-estimate}

We come now to the a priori estimates of global second derivatives, and we obtain the following theorem
\begin{theorem} \label{th6.1}
Suppose $\Omega \subset \mathbb{R}^n$ is a $C^4$ strictly convex domain, and $u \in C^{4}(\overline \Omega \times [0, T))$ is a $k$-admissible solution of equation \eqref{1.1}, satisfying \eqref{1.2} and \eqref{1.3}. Moreover, if $f$ satisfies \eqref{1.4} or $u_0$ satisfies \eqref{1.5}, then we have
\begin{align}\label{6.1}
\sup_{\Omega \times [0, T)} |D^2 u|  \leq M_2,
\end{align}
where $M_2$ depends on $n$, $k$, $l$, $\Omega$, $|D^2 u_0|_{C^0}$, $|Du|_{C^{0}}$, $|u_t|_{C^{0}}$, $\min\limits_{\Omega} f$, $|f|_{C^{2}}$ and $|\varphi|_{C^{3}}$.
\end{theorem}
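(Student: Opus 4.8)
The plan is the classical one for fully nonlinear parabolic equations: reduce the full second-derivative bound to an upper bound for the largest eigenvalue of $D^2u$, run a maximum principle for a suitable auxiliary function whose maximum is attained at $t=0$, in the interior, or on $\partial\Omega$, and handle the boundary case directly. First, since $u$ is $k$-admissible and \eqref{4.2} gives $c_2\le \tfrac{\sigma_k(D^2u)}{\sigma_l(D^2u)}=f(x,u)e^{u_t}\le C$ with $C$ controlled by $|u|_{C^0}$, $|u_t|_{C^0}$ and $\max f$, the cone condition $\lambda(D^2u)\in\Gamma_k$ forces any eigenvalue to be bounded below once all eigenvalues are bounded above: expanding $\sigma_k=\lambda_n\sigma_{k-1}(\lambda|n)+\sigma_k(\lambda|n)$ with $\sigma_{k-1}(\lambda|n)>0$ and $\sigma_k$ pinched between positive constants rules out $\lambda_n$ being very negative. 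Hence it suffices to bound $M(x,t):=\sup_{|\xi|=1}u_{\xi\xi}(x,t)$ from above on $\overline\Omega\times[0,T)$.

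Fix $T_0\in(0,T)$ and consider, for unit $\xi$, an auxiliary function $\phi(x,t,\xi)=\log u_{\xi\xi}+a|Du|^2+b\,\vartheta(x)$ on $\overline\Omega\times[0,T_0]\times\mathbb{S}^{n-1}$, where $\vartheta$ is a fixed smooth function of $d(x)=\mathrm{dist}(x,\partial\Omega)$ and $a,b$ are large constants chosen at the end; write $F^{ij}=\partial\log\tfrac{\sigma_k}{\sigma_l}/\partial u_{ij}$. If the maximum of $\phi$ is at $t_0=0$, it is bounded by $|D^2u_0|_{C^0}$. If it is at an interior point $(x_0,t_0)$, I would differentiate $u_t=\log\tfrac{\sigma_k}{\sigma_l}-\log f$ twice in the direction $\xi_0$; since $\tfrac1{k-l}\log\tfrac{\sigma_k}{\sigma_l}$ is concave on $\Gamma_k$ (it is the logarithm of the concave function $(\sigma_k/\sigma_l)^{1/(k-l)}$), the second-order term $F^{ij,rs}u_{ij\xi_0}u_{rs\xi_0}\le0$ has the favorable sign; feeding this together with the coercive term $2F^{ij}u_{ki}u_{kj}\ge 2F^{11}u_{11}^2$ coming from $F^{ij}(|Du|^2)_{ij}$, and with $F^{ij}\vartheta_{ij}$, into $0\ge F^{ij}\phi_{ij}-\phi_t$ at $(x_0,t_0)$, and absorbing the first-order remainders (from $D_xf$, $D_uf$, from the critical relation $\phi_i=0$, and from $\vartheta$) into $\sum F^{ii}$ and $F^{ij}u_{\xi_0\xi_0 i}u_{\xi_0\xi_0 j}$, gives $u_{\xi_0\xi_0}(x_0,t_0)\le C$. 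The remaining case $x_0\in\partial\Omega$ is the main part.

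On the boundary, differentiating the Neumann condition $u_\nu=\varphi(x,u)$ once along a tangential direction $\tau$ and using $|D\nu|+|D^2\nu|\le C_0$ from \eqref{5.18} bounds all mixed tangential-normal second derivatives, $|u_{\tau\nu}(x_0,t_0)|\le C(1+|Du|)$. If $\xi_0$ is tangential, the Hopf lemma gives $\phi_\nu(x_0,t_0)\le0$, and expanding this with the differentiated Neumann condition and the strict convexity of $\Omega$ — which makes the second fundamental form of $\partial\Omega$ positive definite, hence gives a definite sign to the $D^2d$-terms entering through $\vartheta$ and through $w=u+\varphi d$ — yields a contradiction once $b$ is large; so $\xi_0$ must be effectively normal, and the boundary case reduces to an estimate of $u_{\nu\nu}$ on $\partial\Omega\times[0,T_0]$. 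For the lower bound $u_{\nu\nu}\ge-C$: if $u_{\nu\nu}(x_0,t_0)$ were very negative, $D^2u(x_0,t_0)$ lies in the regime of Lemma \ref{lem2.4}, and comparing $u$ near $x_0$ with a barrier of the form $\varphi(x,u)d-A(d^2+|x-x_0|^2)+\cdots$ via the parabolic maximum principle and the Hopf lemma — with \eqref{2.6} and \eqref{2.7} controlling $F^{ij}$ in the normal direction — gives the bound. For the upper bound $u_{\nu\nu}\le C$: one arranges the hypotheses of Lemma \ref{lem2.6} (a large positive eigenvalue, a sufficiently negative one with a spectral gap, set up from the tangential and mixed bounds together with the lower bound just obtained) and uses a barrier built from $d(x)$ whose good sign is, once more, exactly the strict convexity of $\Omega$, yielding $u_{\nu\nu}(x_0,t_0)\le C$.

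Combining the three cases bounds $M$ on $\overline\Omega\times[0,T_0]$ uniformly in $T_0<T$; letting $T_0\to T$ and applying the reduction step gives \eqref{6.1} with a constant of the stated dependence, the third derivatives of $\varphi$ entering through the boundary manipulations and the definition of $w$. \textbf{The main obstacle is the boundary double normal estimate, and within it the upper bound $u_{\nu\nu}\le C$}: there is no concavity gain available near the boundary, so the needed coercivity must be extracted from the algebraic structure of $\sigma_k/\sigma_l$ via Lemma \ref{lem2.6} and from the geometry of $\partial\Omega$, with the barrier tuned so that the strict convexity of $\Omega$ overwhelms the bad terms.
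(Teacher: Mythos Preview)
Your three-step architecture (reduce $|D^2u|$ to $\sup_{\partial\Omega}|u_{\nu\nu}|$, then bound $u_{\nu\nu}$ below and above via barriers, invoking Lemmas \ref{lem2.4}--\ref{lem2.6}) is exactly the paper's strategy. The barrier sketches for $u_{\nu\nu}$ are close in spirit to Lemmas \ref{lem6.3} and \ref{lem6.5}, though the paper's barrier is the specific Lions--Trudinger--Urbas function $P(x,t)=[1+\beta d](Du\cdot(-Dd)-\varphi)+(A+\tfrac12 M)h$ with $h=-d+d^2$, not $\varphi d - A(d^2+|x-x_0|^2)$; the key algebraic inputs (Lemma \ref{lem2.5} for the lower bound, Lemma \ref{lem2.6} for the upper bound, and Lemma \ref{lem6.4} for the coercivity of $F^{ij}h_{ij}$) are correctly identified.

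The genuine gap is in your reduction step. Your auxiliary function $\phi=\log u_{\xi\xi}+a|Du|^2+b\vartheta$ omits the Lions--Trudinger--Urbas correction $v'(x,t,\xi)=2(\xi\cdot\nu)\,\xi'\cdot(D\varphi-u_lD\nu^l)$ that the paper inserts in \eqref{6.3}. This term is precisely what makes the boundary case split cleanly: with it, one has the exact identity $v(x_0,t_0,\xi_0)=\alpha^2 v(x_0,t_0,\tau)+\beta^2 v(x_0,t_0,\nu)$ for $\xi_0=\alpha\tau+\beta\nu$, which forces $v(x_0,t_0,\xi_0)=v(x_0,t_0,\nu)$ by maximality (the paper's Case b, \eqref{6.16}--\eqref{6.18}). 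Without $v'$ you have no mechanism to handle an oblique $\xi_0$, and you do not address that case. Moreover, your description of the tangential case is not right: the Hopf inequality at a boundary maximum reads $\phi_\nu\ge 0$ (not $\le 0$), and combined with $u_{\xi_0\xi_0\nu}\le -2\kappa_{\min}u_{\xi_0\xi_0}+C(1+|u_{\nu\nu}|)$ from \eqref{6.12} it does \emph{not} yield a contradiction for large $b$; it yields the estimate $u_{\xi_0\xi_0}\le C(1+\sup_{\partial\Omega}|u_{\nu\nu}|)$, which \emph{is} the reduction (Lemma \ref{lem6.2}). The $|u_{\nu\nu}|$-term on the right cannot be absorbed by the $b\vartheta$ term alone, so ``$\xi_0$ must be effectively normal'' is not a valid conclusion. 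In short: the reduction is not ``rule out the tangential direction and force $\xi_0=\nu$,'' but ``in every case, the maximum of the auxiliary function is controlled by $\sup_{\partial\Omega}|u_{\nu\nu}|$.'' To make your argument go through, either add the $v'$ correction as in \eqref{6.3} (which also makes the $u_{1i}$ cross-terms controllable via \eqref{6.9}--\eqref{6.11}), or explain a separate treatment of oblique $\xi_0$.
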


Following the idea of Lions-Trudinger-Urbas \cite{LTU86} (see also Ma-Qiu \cite{MQ15}), we divide the proof of Theorem \ref{th6.1} into three steps. In step one, we reduce global second derivatives to double normal second derivatives on boundary, then we prove the lower estimate of double normal second derivatives on boundary in step two, and at last we prove the upper estimate of double normal second derivatives on boundary.

\subsection{Global $C^2$ estimates can be reduced to the double normal estimates}
\begin{lemma} \label{lem6.2}
Under the assumptions in Theorem \ref{th6.1}, then we have
\begin{align}\label{6.2}
\sup_{\Omega\times [0, T)} |D^2 u|  \leq C_6(1+ \sup_{\partial \Omega\times [0, T)} |u_{\nu \nu}|),
\end{align}
where $C_6$ depends on $n$, $k$, $l$, $\Omega$, $|D^2 u_0|_{C^0}$, $|Du|_{C^{0}}$, $\min\limits_{\Omega} f$, $|f|_{C^{2}}$, and $|\varphi|_{C^{3}}$.
\end{lemma}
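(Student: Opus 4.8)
The plan is to reduce the full Hessian bound to a bound on the double normal second derivatives on the boundary by a standard tangential-tangential-to-everything argument. First I would treat the interior: since $\log(\sigma_k/\sigma_l)$ is concave in $D^2 u$ along the admissible cone, differentiating the equation \eqref{1.1} twice in a fixed direction $\xi$ shows $F^{ij}(u_{\xi\xi})_{ij}-(u_{\xi\xi})_t \ge -C(1+|Du|_{C^0}^2)$ (the right-hand side contributions $f_{\xi\xi}$, $f_{uu}u_\xi^2$, $f_u u_{\xi\xi}$, together with $f_u\ge0$, are controlled using the $C^0$ and $C^1$ bounds and \eqref{4.2}). Applying the parabolic maximum principle to $w=\max_{|\xi|=1} u_{\xi\xi}$ (or more precisely to $u_{\xi\xi}+$ a barrier quadratic in $x$) reduces the interior estimate to the parabolic boundary $\partial\Omega\times[0,T)\cup\overline\Omega\times\{0\}$; the initial slice is controlled by $|D^2u_0|_{C^0}$, so only $\partial\Omega\times[0,T)$ remains.

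On $\partial\Omega$ I would use the now-standard device of Lions--Trudinger--Urbas: at a boundary point choose an orthonormal frame with $e_n=\nu$, so that the tangential-tangential derivatives $u_{\alpha\beta}$ ($\alpha,\beta<n$), the mixed derivatives $u_{\alpha\nu}$, and $u_{\nu\nu}$ decompose $D^2u$. The tangential-tangential part is handled by differentiating the Neumann condition $u_\nu=\varphi(x,u)$ twice along $\partial\Omega$: using the second fundamental form of $\partial\Omega$ (where strict convexity enters with a favorable sign) one gets $u_{\alpha\beta}=-\kappa_{\alpha\beta}u_\nu+$ terms involving $\varphi,\varphi_u,\varphi_x,Du$ and lower-order data, hence $|u_{\alpha\beta}|\le C(1+|Du|_{C^0}+|u_{\nu\nu}|)$ on $\partial\Omega$ — actually since $u_\nu=\varphi$ is already controlled, one obtains $|u_{\alpha\beta}|\le C$. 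The mixed derivatives $u_{\alpha\nu}$ are estimated by constructing an auxiliary barrier of the form $\pm\big(\sum_{\beta<n}c^{j\beta}\partial_\beta(u-\varphi d)\big)+A|D u|^2-B\,\psi$ near $\partial\Omega$, where $d$ is the distance function and $\psi$ is a defining-type function with $F^{ij}\psi_{ij}$ negative and large; the tangential operator applied to the boundary condition makes the barrier function vanish to the right order on $\partial\Omega$, and a maximum principle plus Hopf gives $|u_{\alpha\nu}|\le C(1+|u_{\nu\nu}|^{1/2}\cdot\text{(stuff)})$, which combined with Cauchy--Schwarz yields the stated bound $|D^2u|\le C_6(1+\sup_{\partial\Omega}|u_{\nu\nu}|)$.

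The one genuinely delicate point is the barrier construction for the mixed second derivatives: one needs $\sum_i F^{ii}$ to be bounded below (which follows from \eqref{2.10} and the fact that $\sigma_k/\sigma_l=fe^{u_t}$ is bounded above and below by \eqref{4.2}), and one must absorb the cross terms coming from $F^{ij}$ being non-diagonal and from differentiating the cutoff/distance functions, all against $F^{11}w_1^2$ or $\sum F^{ii}$ terms exactly as in the near-boundary gradient estimate of Theorem \ref{th5.3}. I expect the main obstacle to be bookkeeping: ensuring every error term is either $O(1)$ from the already-established $C^0$ and $C^1$ estimates, or is dominated by a good negative term, uniformly in $t$; the parabolic terms $w_{1t}$ (or $u_t$-derivatives) are handled exactly as in Section 5, using that $F^{ij}u_{ijt}-u_{tt}=\frac{f_u}{f}u_t$ is controlled. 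No new idea beyond the elliptic Ma--Qiu / LTU scheme and the parabolic maximum principle is needed; the time variable only contributes lower-order terms because the operator is translation-invariant in $t$.
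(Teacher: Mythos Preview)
Your boundary decomposition has the two hard cases reversed, and the plan for the tangential-tangential derivatives contains a genuine gap.

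Differentiating the Neumann condition $u_\nu=\varphi(x,u)$ \emph{once} along a tangent direction $\tau$ already gives $u_{\tau\nu}=D_\tau\varphi-u_i\,\partial_\tau\nu^i$, so the mixed derivatives $u_{\alpha\nu}$ are bounded immediately by $|Du|_{C^0}$ and $|\varphi|_{C^1}$---no barrier construction is needed there. The barrier you sketch (with a defining function and $A|Du|^2$ correction) is instead the device used in the paper for the \emph{double normal} estimates (Lemmas \ref{lem6.3} and \ref{lem6.5}), not for this lemma.

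The real issue is your claim that differentiating the Neumann condition \emph{twice} tangentially yields $u_{\alpha\beta}=-\kappa_{\alpha\beta}u_\nu+\text{lower order}$. Expanding $\partial_\alpha\partial_\beta(u_i\nu^i)$ produces a \emph{third} derivative $u_{\alpha\beta\nu}$ together with terms $u_{i\beta}\nu^i_{,\alpha}+u_{i\alpha}\nu^i_{,\beta}$ and first-order quantities; the identity relates $u_{\alpha\beta\nu}$ to tangential second derivatives, but it does not bound $u_{\alpha\beta}$ unless you already control the sign of $u_{\alpha\beta\nu}$. That sign control is exactly what the Lions--Trudinger--Urbas auxiliary function supplies: the paper maximizes
\[
v(x,t,\xi)=u_{\xi\xi}-2(\xi\cdot\nu)\,\xi'\cdot(D\varphi-u_l D\nu^l)+|Du|^2+\tfrac{K_1}{2}|x|^2
\]
over $\overline\Omega\times[0,T)\times\mathbb{S}^{n-1}$. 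Concavity forces the maximum to the parabolic boundary; if the maximizing direction $\xi_0$ is tangential, the Hopf lemma gives $v_\nu\ge0$, and \emph{this} is what makes $u_{\xi_0\xi_0\nu}\le -2\kappa_{\min}u_{\xi_0\xi_0}+C(1+|u_{\nu\nu}|)$ useful (here the strict convexity enters). If $\xi_0$ is non-tangential, the correction term $v'$ is designed so that $v(x_0,t_0,\xi_0)=v(x_0,t_0,\nu)$, reducing directly to $|u_{\nu\nu}|$. Without a mechanism to control the sign of the normal derivative of $u_{\xi\xi}$ at the boundary maximum, your argument for the tangential-tangential part does not close.
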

\begin{proof}
Since $\Omega$ is a $C^4$ domain, it is well known that there exists a small positive universal constant $0<{\mu} < \frac{1}{10}$ such that
$d(x) \in C^4(\overline{\Omega_{\mu}})$ and $\nu = - \nabla d$ on $\partial \Omega$. We define $\widetilde{d} \in C^4(\overline{\Omega})$ such that
$\widetilde{d} =d$ in $\overline{\Omega_{\mu}}$ and denote
\begin{align*}
\nu = - \nabla \widetilde{d}, \quad \text{ in } \Omega.
\end{align*}
In fact, $\nu$ is a $C^3(\overline{\Omega})$ extension of the outer unit normal vector field on $\partial \Omega$. We also assume $0 \in \Omega$.

Following the idea of Lions-Trudinger-Urbas \cite{LTU86} (see also Ma-Qiu \cite{MQ15}), we consider the auxiliary function
\begin{align}\label{6.3}
v(x,t,\xi)=u_{\xi\xi}-v'(x,t,\xi)+ |Du|^2+ \frac{K_1}{2}|x|^2,
\end{align}
where $v'(x,t,\xi)=2(\xi\cdot\nu)\xi'\cdot(D\varphi-u_lD\nu^l)=a^lu_l+b$, $\xi'=\xi-(\xi\cdot\nu)\nu$, $a^l=2(\xi\cdot\nu)({\xi'}^l\varphi_u-{\xi'}^iD_i\nu^l)$, $b=2(\xi\cdot\nu){\xi'}^l\varphi_l$, and $K_1$ is a positive universal constant to be determined later.

For any fixed $\xi \in \mathbb{S}^{n-1}$, we have
\begin{align}\label{6.4}
{F^{ij}}{v_{ij}}- v_t =& {F^{ij}}{u_{ij\xi \xi }}-{u_{t\xi \xi }} - \left( {F^{ij}}v{'_{ij}}-{v{'_t}} \right) \notag \\
 &+2{u_k}\left( {F^{ij}}{u_{ijk}}-{{u_{tk}}} \right) + 2{F^{ij}}{u_{ik}}{u_{jk}} + {K_1}\sum\limits_{i = 1}^n {{F^{ii}}},
\end{align}
where $F^{ij}  =: \frac{{\partial [ \log \frac{{\sigma _k (D^2 u )}}{{\sigma _l (D^2 u )}}]}}{{\partial u_{ij} }}$. Direct calculations yield
\begin{align*}
 v' _t  = a^l u_{lt} +  { a^l}_{,t} u_{l} + b_t \geq  a^l u_{lt}  - C_7,
 \end{align*}
and
\begin{align*}
 F^{ij} v' _{ij} =& F^{ij}[a^l u_{lij} +  { a^l}_{,ij} u_l +2  { a^l}_{,i} u_{lj}   + b_{ij} ] \\
  \leq& F^{ij}[a^l u_{lij} +2  { a^l}_{,i} u_{lj} ] + C_{8} \sum\limits_{i = 1}^n {{F^{ii}}}.
 \end{align*}
Hence
\begin{align}\label{6.5}
{F^{ij}}{v_{ij}}- v_t \geq& {F^{ij}}{u_{ij\xi \xi }}-{u_{t\xi \xi }} - a^l ({F^{ij}}{u_{lij}}-{u_{lt }})+2{u_k}( {F^{ij}}{u_{ijk}}-{{u_{tk}}} ) \notag \\
 &- 2  { a^l}_{,i} F^{ij} u_{lj}+ 2{F^{ij}}{u_{ik}}{u_{jk}} +( {K_1} - C_{8})\sum\limits_{i = 1}^n {{F^{ii}}} -C_7\notag \\
\geq& - {F^{ij,kl}}{u_{ij\xi }}{u_{kl\xi }}+\frac{f_u}{f} u_{\xi \xi} - F^{ij}  { a^l}_{,i} { a^l}_{,j} +( {K_1} - C_{8})\sum\limits_{i = 1}^n {{F^{ii}}} -C_{9}\notag \\
\geq&\frac{f_u}{f} v  +( {K_1} - C_{10})\sum\limits_{i = 1}^n {{F^{ii}}} -C_{9}\notag \\
\geq&\frac{f_u}{f} v,
\end{align}
if we choose $K_1$ sufficiently large. So $\max\limits_{\Omega \times [0, T)} v(x, t, \xi)$ attains its maximum on $\partial_p (\Omega \times [0, T])$. Hence $\max\limits_{(\Omega \times [0, T))\times \mathbb{S}^{n-1}} v(x, t, \xi)$ attains its maximum at some point $(x_0, t_0) \in \partial_p (\Omega \times [0, T))$ and some direction $\xi_0 \in \mathbb{S}^{n-1}$. If $t_0 =0$, then $|D^2 u(x_0, t_0)| = |D^2 u_0(x_0)|$, and it is easy to obtain the estimate \eqref{6.2}.

In the following, we assume $t_0 \in (0, T)$.

Case a: $\xi_0$ is tangential to $\partial \Omega$ at $x_0$.

We directly have $\xi_0 \cdot \nu =0$, $v'(x_0, t_0, \xi_0) =0$, and $u_{\xi_0 \xi_0} (x_0, t_0)>0$. In the following, all the calculations are at the point $(x_0, t_0)$ and $\xi = \xi_0$.

From the Neumann boundary condition, we have
\begin{align}\label{6.6}
u_{li}\nu^l =& [c^{ij} + \nu^i \nu^j ]\nu^l u_{lj}\notag  \\
=& c^{ij} [D_j(\nu^l u_{l}) -D_j \nu^l u_l ] + \nu^i \nu^j \nu^l u_{lj} \notag \\
=& c^{ij} D_j\varphi  - c^{ij}u_l D_j \nu^l + \nu^i \nu^j \nu^l u_{lj}.
\end{align}
So it follows that
\begin{align}\label{6.7}
u_{lip} \nu^l =& [c^{pq} + \nu^p \nu^q ] u_{liq} \nu^l \notag \\
=& c^{pq} [D_q (u_{li} \nu^l)- u_{li} D_q \nu^l] + \nu^p \nu^q u_{liq} \nu^l \notag \\
=& c^{pq}D_q (c^{ij} D_j\varphi - c^{ij}u_l D_j \nu^l + \nu^i \nu^j \nu^l u_{lj}) -c^{pq} u_{li} D_q \nu^l + \nu^p \nu^q \nu^l u_{liq}.
\end{align}
then we obtain
\begin{align}\label{6.8}
u_{\xi_0 \xi_0 \nu} =& \sum_{ip l=1}^n  \xi_0^i \xi_0^p u_{lip} \nu^l \notag \\
=& \sum_{ip=1}^n  \xi_0^i \xi_0^p[c^{pq}D_q (c^{ij} D_j\varphi- c^{ij}u_l D_j \nu^l + \nu^i \nu^j \nu^l u_{lj})  -c^{pq} u_{li} D_q \nu^l + \nu^p \nu^q \nu^l u_{liq} ]\notag \\
=& \sum_{i=1}^n  \xi_0^i \xi_0^q[D_q(c^{ij}D_j\varphi -c^{ij}u_l D_j \nu^l + \nu^i \nu^j \nu^l u_{lj}) - u_{li} D_q \nu^l ]\notag \\
=& \xi_0^i \xi_0^q  \left(c^{ij}D_qD_j\varphi + D_qc^{ij}D_j\varphi\right) - \xi_0^j \xi_0^q u_{lq} D_j \nu^l + \xi_0^i \xi_0^q D_q \nu^i u_{\nu \nu}\notag\\
&- \xi_0^i \xi_0^q D_q( c^{ij}D_j \nu^l ) u_l    -\xi_0^i \xi_0^q  u_{li} D_q \nu^l\notag \\
\leq&\varphi_u u_{\xi_0 \xi_0} - 2\xi_0^i  u_{l\xi_0} D_i \nu^l+\xi_0^i \xi_0^q D_q \nu^i u_{\nu \nu}-C_{11}.
\end{align}
 We assume $\xi_0 = e_1$, it is easy to get the bound for $u_{1i}(x_0, t_0)$ for $i > 1$. In fact, we can assume $\xi(\varepsilon) = \frac{(1, \varepsilon, 0, \cdots, 0)}{\sqrt {1+\varepsilon^2}}$. Then we have
\begin{align}\label{6.9}
  0 =& \frac{{dv(x_0, t_0, \xi (\varepsilon))}} {{d\varepsilon}}|_{\varepsilon = 0}  \notag \\
   =& 2u_{ij} (x_0, t_0)\frac{{d\xi ^i (\varepsilon)}} {{d\varepsilon}}|_{\varepsilon = 0} \xi ^j (0) - \frac{{dv'(x_0 ,\xi (\varepsilon))}}
{{d\varepsilon}}|_{\varepsilon = 0}  \notag \\
   =& 2u_{12} (x_0, t_0) - 2\nu ^2 (D _1 \varphi  - u_l D _1 \nu ^l ),
\end{align}
so
\begin{align}\label{6.10}
|u_{12} (x_0, t_0)|= |\nu ^2 (D _1 \varphi - u_l D _1 \nu ^l )| \leq C_{12}.
\end{align}
Similarly, we have for all $i > 1$,
\begin{align}\label{6.11}
|u_{1i} (x_0, t_0)|\leq  C_{12}.
\end{align}
So by the strict convexity of $\Omega$ and $\varphi_u < 0$, we have
\begin{align}\label{6.12}
u_{\xi_0 \xi_0 \nu} \leq  \varphi_u u_{\xi_0 \xi_0} -2 D_1 \nu^1 u_{\xi_0 \xi_0} + C_{13} (1+ | u_{\nu \nu}|)
\leq -2 \kappa_{min} u_{\xi_0 \xi_0} + C_{13} (1+ | u_{\nu \nu}|),
\end{align}
where $\kappa_{min}$ is the minimum principal curvature of $\partial \Omega$ such that $D_1 \nu^1 \geq \kappa_{min} > 0$. Then combining the above with the Hopf lemma, \eqref{6.6} and \eqref{6.11},
\begin{align}\label{6.13}
0 \leq& v_\nu (x_0, t_0, \xi_0)\notag \\
=& u_{\xi_0 \xi_0 \nu} +2u_{i\xi_0}D_j{\xi_0}^i\nu^j-  a^l u_{l\nu}  -  D_\nu a^l u_{l}- b_\nu + 2u_i u_{i\nu}+ K_1 (x \cdot \nu) \notag \\
\leq& u_{\xi_0 \xi_0 \nu} + C_{14} \notag \\
\leq& -2 \kappa_{min} u_{\xi_0 \xi_0} + C_{13} (1+ | u_{\nu \nu}|) + C_{14}.
\end{align}
Then we get
\begin{align}\label{6.14}
u_{\xi_0 \xi_0}(x_0, t_0) \leq  \frac{1} {2\kappa_{min}} (C_{13} + C_{14}) (1+ | u_{\nu \nu}|),
\end{align}
and
\begin{align}\label{6.15}
|u_{\xi \xi}(x, t)| \leq& (n-1) \max\limits_{(\Omega \times [0, T))  \times \mathbb{S}^{n-1}} u_{\xi \xi}(x,t) \notag \\
 \leq& (n-1)[ \max\limits_{(\Omega \times [0, T])  \times \mathbb{S}^{n-1}} v(x, t, \xi)  + C_{15}] = (n-1) [v(x_0, t_0, \xi_0) + C_{15} ]\notag \\
 \leq& (n-1) [u_{\xi_0 \xi_0}(x_0, t_0) +2 C_{15}] \notag \\
\leq& C_{16} (1+ | u_{\nu \nu}|).
\end{align}

Case b: $\xi_0$ is non-tangential.

We  have $\xi_0 \cdot \nu \ne 0$ and  write  $\xi_0 = \alpha \tau + \beta \nu$, where $\tau$ is a tangential vector  and $\alpha = \xi_0 \cdot \tau \geq 0$, $\beta = \xi_0 \cdot \nu \ne 0$, $\alpha ^2 + \beta ^2 =1$ and $\tau \cdot \nu =0$. Then we have

\begin{align}\label{6.16}
u_{\xi_0 \xi_0}(x_0, t_0) =& \alpha^2 u_{\tau \tau}(x_0, t_0) + \beta^2u_{\nu \nu}(x_0, t_0)+ 2 \alpha \beta u_{\tau \nu}(x_0, t_0) \notag \\
=& \alpha^2 u_{\tau \tau}(x_0, t_0) + \beta^2u_{\nu \nu}(x_0, t_0)+ 2  \alpha \beta  [D_i \varphi\tau^i - u_iD_j \nu^i\tau^j]\notag\\
=& \alpha^2 v_{\tau \tau}(x_0, t_0) + \beta^2 v_{\nu \nu}(x_0, t_0)\notag\\
&-|Du|^2-\frac{K_1}{2}|x|^2+ 2  \alpha \beta  (D_i \varphi\tau^i - u_jD_i \nu^j\tau^i)\notag\\
=& \alpha^2 v_{\tau \tau}(x_0, t_0) + \beta^2 v_{\nu \nu}(x_0, t_0)-|Du|^2-\frac{K_1}{2}|x|^2+ v'(x_0, t_0, \xi_0).
\end{align}
Hence
\begin{align}\label{6.17}
v(x_0, t_0, \xi_0) =& \alpha^2 v(x_0, t_0, \tau) + \beta^2 v(x_0, t_0, \nu) \notag \\
\leq&  \alpha^2 v(x_0, t_0, \xi_0)+\beta^2 v(x_0, t_0, \nu),
\end{align}
where the inequality follows from  that $v(x_0, t_0, \xi)$ attains its maximum at the direction $\xi_0$.
Since $\beta\neq0$, we finally obtain
\begin{align}\label{6.18}
v(x_0, t_0, \xi_0)= v(x_0, t_0, \nu).
\end{align}
This yields
\begin{align}\label{6.19}
u_{\xi_0 \xi_0}(x_0, t_0) \leq v(x_0, t_0, \xi_0) + C_{15} = v(x_0, t_0, \nu) +C_{15}  \leq | u_{\nu \nu}|+ 2C_{15}.
\end{align}
Similarly with \eqref{6.15}, we can prove \eqref{6.2}.
\end{proof}

\subsection{Lower estimate of double normal second derivatives on boundary}

\begin{lemma} \label{lem6.3}
Under the assumptions in Theorem \ref{th6.1}, then we have
\begin{align}\label{6.20}
\inf_{\partial \Omega \times [0, T)} u_{\nu \nu}  \geq - C_{17},
\end{align}
where $C_{17}$ is a positive constants depending on $n$, $k$, $l$, $\Omega$, $|u_0|_{C^2}$, $|Du|_{C^{0}}$, $|u_t|_{C^0}$, $\min f$, $|f|_{C^{2}}$ and $|\varphi|_{C^{2}}$.
\end{lemma}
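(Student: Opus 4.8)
The plan is to localize near $\partial\Omega$ in the style of Lions--Trudinger--Urbas \cite{LTU86} and Ma--Qiu \cite{MQ15} (as in \cite{CZ16} for the elliptic quotient problem), working with the linearized parabolic operator $\mathcal{L}:=F^{ij}\partial_{ij}-\partial_{t}$, $F^{ij}:=\partial_{u_{ij}}\log\!\big(\sigma_{k}(D^{2}u)/\sigma_{l}(D^{2}u)\big)$; recall that $(F^{ij})$ is positive semidefinite (concavity of $\log(\sigma_{k}/\sigma_{l})$ on $\Gamma_{k}$) and $\sum_{ij}F^{ij}u_{ij}=k-l$ (homogeneity). First I would extend $\nu=-Dd$ to $\Omega_{\mu}$ and set $g:=u_{\nu}-\varphi(x,u)$, so that $g\equiv 0$ on $\partial\Omega\times[0,T)$ by the boundary condition, and a direct computation gives $\partial_{\nu}g=u_{\nu\nu}+O(1)$ on $\partial\Omega$, the error bounded by $|Du|_{C^{0}}$ (Theorem \ref{th5.1}) and $|\varphi|_{C^{1}}$. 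Hence it suffices to establish the one-sided near-boundary bound
\[
g(x,t)\le C\,d(x),\qquad (x,t)\in\Omega_{\mu}\times[0,T),
\]
for an admissible constant $C$: since $g$ and $d$ both vanish on $\partial\Omega$ and $d$ is comparable to $\operatorname{dist}(\cdot,\partial\Omega)$, this at once forces $\partial_{\nu}g\ge -C$ along $\partial\Omega$, hence $u_{\nu\nu}\ge -C_{17}$.

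To prove $g\le Cd$ I would apply the parabolic maximum principle to $h:=Cd-g$, possibly modified by correction terms vanishing on $\partial\Omega$ (of the type used in \cite{LTU86, MQ15}). On the parabolic boundary of $\Omega_{\mu}\times[0,T)$: $h=0$ on $\partial\Omega$; on $\{d=\mu\}$ one has $|g|\le|Du|_{C^{0}}+|\varphi|_{C^{0}}$, so $h>0$ provided $C\mu$ is large; at $t=0$, the $j=0$ instance of the compatibility condition \eqref{1.6} gives $(u_{0})_{\nu}-\varphi(x,u_{0})\equiv 0$ on $\partial\Omega$, whence the $C^{1}$ function $(u_{0})_{\nu}-\varphi(x,u_{0})$ is $\le L\,d$ near $\partial\Omega$ with $L$ depending only on $|u_{0}|_{C^{2}}$ and $|\varphi|_{C^{1}}$, and for $C\ge L$ we get $h(\cdot,0)\ge 0$. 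It then remains to rule out a negative minimum of $h$ at an interior point $(\bar x,\bar t)\in\Omega_{\mu}\times(0,T)$.

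At such a point $\nabla h=0$, $D^{2}h\ge 0$, $h_{t}\le 0$, so $\mathcal{L}h\ge 0$; moreover, contracting $\nabla h=0$ with $\nu$ shows $u_{\nu\nu}(\bar x,\bar t)=-C+O(1)$, strongly negative once $C$ is large. I would then compute $\mathcal{L}g$: differentiating the equation gives $F^{ij}u_{ijk}=u_{tk}+(f_{x_{k}}+f_{u}u_{k})/f$, and combining this with $\sum F^{ij}u_{ij}=k-l$ and $\nu=-Dd$ all third-order and $u_{t}$ terms cancel, leaving
\[
\mathcal{L}g=-2F^{ij}d_{ik}u_{kj}+R,\qquad |R|\le C\Big(1+\sum_{i}F^{ii}\Big),
\]
while $F^{ij}d_{ij}\le 0$ since $d$ is concave in the strictly convex $\Omega$, and the identity $D^{2}d\cdot Dd=0$ together with the strict convexity makes the tangential block of $-D^{2}d$ bounded below by $c_{\Omega}I$. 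The crucial observation is that at $(\bar x,\bar t)$ the matrix $D^{2}u$ has a strongly negative eigenvalue ($\le u_{\nu\nu}=-C+O(1)$); diagonalizing $D^{2}u$ and placing that eigenvalue in the first slot, Lemmas \ref{lem2.4} and \ref{lem2.5} apply and yield $F^{11}\ge c_{3}\sum_{i}F^{ii}$ and $\sum_{i}F^{ii}\ge c\,(-u_{11})^{k-l-1}$. Feeding these bounds, together with the definite-sign contribution coming from the strict convexity of $\Omega$ (and from the correction terms), into $\mathcal{L}h=CF^{ij}d_{ij}-\mathcal{L}g$ should force $\mathcal{L}h<0$ at $(\bar x,\bar t)$, contradicting $\mathcal{L}h\ge 0$; hence $h\ge 0$ on $\Omega_{\mu}\times[0,T)$ and \eqref{6.20} follows.

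I expect the main obstacle to be precisely the leftover term $-2F^{ij}d_{ik}u_{kj}$ in $\mathcal{L}g$, a second-order expression in $u$ weighted by $(F^{ij})$ with no a priori sign: absorbing it requires \emph{both} the strict convexity of $\Omega$, so that $D^{2}d$ annihilates the normal direction and is negative definite on the tangent space, \emph{and} the quotient-specific inequality $F^{11}\ge c_{3}\sum_{i}F^{ii}$ of Lemma \ref{lem2.4}, which becomes available exactly because the relevant eigenvalue $u_{\nu\nu}$ of $D^{2}u$ turns negative at the would-be interior minimum --- the role the preliminary section explicitly attributes to that lemma for the lower double-normal estimate. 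Choosing and tuning the correction terms so that the linearized operator is genuinely negative there (not merely controlled) is the technical heart of the argument; all other steps are routine.
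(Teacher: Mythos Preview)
Your proposal follows the correct Lions--Trudinger--Urbas barrier philosophy, correctly isolates the dangerous term $-2F^{ij}d_{ik}u_{kj}$, and correctly points to Lemma~\ref{lem2.5} as the quotient-specific tool. But the barrier $h=Cd-g$ with a \emph{fixed} constant $C$ cannot close the argument, and the mechanism you describe is not the one that works.

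The gap is quantitative. In a frame diagonalizing $D^{2}u$ the bad term is $-2\sum_i F^{ii}\lambda_i d_{ii}$, and the only available bound on the individual $|\lambda_i|$ at an interior point is Lemma~\ref{lem6.2}: $|\lambda_i|\le C_6(1+M)$ with $M:=\sup_{\partial\Omega}|u_{\nu\nu}|$ (a lemma you do not invoke). Thus the bad term is of size $\sim M\sum F^{ii}$. Your proposed good contribution $CF^{ij}d_{ij}$ gives nothing useful, because $D^{2}d$ has the null direction $Dd$ and one cannot obtain $\sum_i F^{ii}d_{ii}\le -c\sum_i F^{ii}$. Your other proposed mechanism --- extracting a negative eigenvalue from $u_{\nu\nu}=-C+O(1)$ and invoking $F^{11}\ge c_3\sum F^{ii}$ --- does produce a large $F^{11}$, but it is not multiplied by any term carrying a free large parameter; one gets at best a good term of order $C\sum F^{ii}$, which cannot absorb $M\sum F^{ii}$ when $M\gg C$.

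The paper closes this by three devices absent from your sketch. First, the coefficient in the barrier is $A+\tfrac12 M$, not a fixed $C$: then the final Hopf inequality gives $M\le A+\tfrac12 M+O(1)$, which bootstraps to $M\le 2A+O(1)$. Second, the additive piece uses $h=-d+d^{2}$ (Lemma~\ref{lem6.4}), whose Hessian is \emph{strictly} positive near $\partial\Omega$, yielding $F^{ij}h_{ij}\ge c_6(\sum F^{ii}+1)$; plain $d$ fails here. Third --- and this is the decisive structural point --- the multiplicative factor $(1+\beta d)$ on $Du\cdot(-Dd)-\varphi$. Differentiating it puts a term $\beta d_i[\,\cdot\,]$ into $P_i=0$; after diagonalizing $D^{2}u$ one solves for $u_{ii}$ in any direction with $d_i^{2}\ge 1/(n\beta)$ and finds $u_{ii}\le -(A+M)/5$. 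Lemma~\ref{lem2.5} is then applied to \emph{that} index $i_0$ (chosen with $d_{i_0}^{2}\ge 1/n$, which exists since $|Dd|=1$), giving $F^{i_0i_0}\ge c_7\sum F^{ii}$ and hence a good term $\sim \beta(A+M)\sum F^{ii}$ in $\sum F^{ii}P_{ii}-P_t$. With $\beta$ chosen large this beats the bad $M\sum F^{ii}$, producing the contradiction. In short: the negative diagonal entry that triggers Lemma~\ref{lem2.5} is not forced by $u_{\nu\nu}<0$ but by the first-order condition together with the $(1+\beta d)$ factor, and the free parameter $\beta$ is precisely what lets the good term dominate.
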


To prove Lemma \ref{lem6.3} and Lemma \ref{lem6.5}, we need the following lemma.
\begin{lemma} \label{lem6.4}
Suppose $\Omega \subset \mathbb{R}^n$ is a $C^2$ strictly convex domain, and $u \in C^{2}(\Omega \times [0, T))$ is a $k$-admissible solution of parabolic Hessian quotient equation \eqref{1.1}. Denote $ F^{ij}  = \frac{{\partial [\log \frac{{\sigma _k (D^2 u )}}{{\sigma _l (D^2 u )}}]}}{{\partial u_{ij} }}$, and
\begin{align}\label{6.21}
h(x) = -d(x) + d^2(x),
\end{align}
where $d(x) =\textrm{dist}(x, \partial \Omega)$ is the distance function of $\Omega$. Then
\begin{align}\label{6.22}
 \sum\limits_{ij=1}^n F^{ij} h_{ij} \geq c_6 (\sum\limits_{i=1}^n F^{ii} + 1), \quad  \text{ in } \Omega_\mu \times [0, T),
\end{align}
where $\Omega_\mu = \{ x \in \Omega: d(x) < \mu\}$ for a small universal constant $\mu$ and $c_6$ is a positive constant depending only on $n$, $k$, $l$, $\Omega$ and $c_2$ (here $c_2$ is defined in \eqref{4.2}).
\end{lemma}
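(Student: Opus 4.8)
The plan is to exploit the strict convexity of $\Omega$ through the concavity of the distance function. First I would record the pointwise identities $h_i=(2d-1)d_i$ and $h_{ij}=(2d-1)d_{ij}+2d_id_j$, so that
\begin{align*}
D^2h=(1-2d)(-D^2d)+2\,Dd\otimes Dd\qquad\text{in }\Omega_\mu .
\end{align*}
Since $\Omega$ is strictly convex, $d$ is concave in a tubular neighbourhood, so $-D^2d\ge 0$; writing $D^2d$ in the frame adapted to the level sets of $d$, its normal eigenvalue (in the direction $Dd$) is $0$ and its tangential eigenvalues are $-\kappa_i/(1-\kappa_id)$, where the $\kappa_i$ are the principal curvatures of $\partial\Omega$. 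Fixing $\mu$ small once and for all — so that $d\in C^2(\overline{\Omega_\mu})$, the field $\nu=-Dd$ is the $C^1$ extension of the outer normal, and $\mu<\tfrac14$ — strict convexity gives $-D^2d\ge\kappa_0(I-\nu\otimes\nu)$ with $\kappa_0=\kappa_0(\Omega)>0$ a lower bound for the smallest principal curvature, while $|Dd|=1$ gives $Dd\otimes Dd=\nu\otimes\nu$. Hence
\begin{align*}
D^2h\ \ge\ (1-2d)\kappa_0(I-\nu\otimes\nu)+2\,\nu\otimes\nu\ \ge\ c_7I\qquad\text{in }\Omega_\mu ,
\end{align*}
with $c_7=\min\{\kappa_0/2,\,2\}>0$ depending only on $n$ and $\Omega$.

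Since $u$ is $k$-admissible the matrix $(F^{ij})$ is positive definite (the operator $\log(\sigma_k/\sigma_l)$ is elliptic on $\Gamma_k$), so $D^2h\ge c_7I$ immediately yields $\sum_{ij}F^{ij}h_{ij}=\mathrm{tr}(F\cdot D^2h)\ge c_7\sum_iF^{ii}$. It then remains to upgrade $c_7\sum F^{ii}$ to $c_6\big(\sum F^{ii}+1\big)$, i.e. to produce a fixed positive lower bound for $\sum_iF^{ii}$. For this I would use that along the solution $\sigma_k(D^2u)/\sigma_l(D^2u)=f(x,u)e^{u_t}$ lies between two positive constants: the lower bound is $c_2$ from \eqref{4.2}, and the matching upper bound follows in the same way from the $C^0$-bound of $u$ (Theorem \ref{th4.1}) and the $u_t$-bound (Lemma \ref{lem3.1}). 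By the Newton–MacLaurin inequalities (Proposition \ref{prop2.3}) and the concavity of $(\sigma_k/\sigma_l)^{1/(k-l)}$ on $\Gamma_k$, the tangency inequality for this $1$-homogeneous concave function at $(1,\dots,1)$ gives
\begin{align*}
\sum_i\frac{\partial(\sigma_k/\sigma_l)}{\partial\lambda_i}\ \ge\ c(n,k,l)\,\Big(\tfrac{\sigma_k}{\sigma_l}\Big)^{\frac{k-l-1}{k-l}},
\end{align*}
hence $\sum_iF^{ii}=\dfrac{1}{\sigma_k/\sigma_l}\sum_i\dfrac{\partial(\sigma_k/\sigma_l)}{\partial\lambda_i}\ge c(n,k,l)\big(\sigma_k/\sigma_l\big)^{-1/(k-l)}\ge c_8>0$, with $c_8$ depending on $n,k,l$ and the two-sided bound on $\sigma_k/\sigma_l$. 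Combining, $\sum_{ij}F^{ij}h_{ij}\ge\tfrac{c_7}{2}\sum F^{ii}+\tfrac{c_7}{2}c_8\ge c_6\big(\sum F^{ii}+1\big)$ with $c_6=\min\{c_7/2,\,c_7c_8/2\}$, which is \eqref{6.22}.

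The algebra in the first paragraph is routine; the step I expect to require the most care is the lower bound $\sum_iF^{ii}\ge c_8$, since this is precisely where the Hessian-quotient structure (the symmetric-function/concavity inequality for $\sigma_k/\sigma_l$) and the two-sided control of $\sigma_k/\sigma_l$ genuinely enter — without an upper bound on $\sigma_k/\sigma_l$ the quantity $\sum_iF^{ii}$ can degenerate to $0$. One should also make sure the radius $\mu$ chosen here is compatible with (indeed can be taken equal to) the universal $\mu$ used elsewhere in Section 6, so that $h$ is $C^2$ up to $\partial\Omega$ and the extension $\nu=-Dd$ is available throughout $\overline{\Omega_\mu}$.
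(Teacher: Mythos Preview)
The paper does not actually prove Lemma \ref{lem6.4}; it is stated as a prerequisite for Lemmas \ref{lem6.3} and \ref{lem6.5} and then immediately used without proof (the argument is standard in the Hessian and Hessian--quotient Neumann literature and appears in the cited papers \cite{MQ15, CZ16}). Your proof is correct and is exactly the expected one: strict convexity of $\Omega$ forces $D^2h\ge c_7 I$ in $\Omega_\mu$ via the block decomposition of $-D^2d$ in the $(\text{tangential},\nu)$ frame, positive definiteness of $(F^{ij})$ then gives $\sum_{ij}F^{ij}h_{ij}\ge c_7\sum_iF^{ii}$, and the Newton--MacLaurin/concavity inequality $\sum_iF^{ii}\ge c(n,k,l)(\sigma_k/\sigma_l)^{-1/(k-l)}$ --- which the paper itself invokes in identical form at \eqref{8.4} and \eqref{8.7} --- absorbs the ``$+1$''.

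One small remark on the dependence of $c_6$. You correctly observe that the bound $\sum_iF^{ii}\ge c_8$ needs an \emph{upper} bound on $\sigma_k/\sigma_l$, whereas the lemma as stated lists only the lower bound $c_2$. This is a minor imprecision in the lemma's statement rather than a gap in your argument: the companion upper bound $\sigma_k/\sigma_l\le(\max f)\,e^{\max|u_t|}$ follows from the same $C^0$ and $u_t$ estimates (Theorem \ref{th4.1}, Lemma \ref{lem3.1}) that produced $c_2$, so the constant $c_6$ in practice depends on those estimates through both bounds. Your caveat about this point is well placed.
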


Now we come to prove Lemma \ref{lem6.3}.

\begin{proof}

Firstly, we assume $ \inf_{\partial \Omega \times [0, T)} u_{\nu \nu} < 0$, otherwise there is nothing to prove. Also, if $-\inf_{\partial \Omega\times [0, T)} u_{\nu \nu} < \sup_{\partial \Omega\times [0, T)} u_{\nu \nu}$, that is $\sup_{\partial \Omega\times [0, T)} |u_{\nu \nu}|= \sup_{\partial \Omega\times [0, T)} u_{\nu \nu}$, we can easily get from Lemma \ref{lem6.4}
\begin{align}
- \inf_{\partial \Omega\times [0, T)} u_{\nu \nu}  < \sup_{\partial \Omega\times [0, T)} u_{\nu \nu} \leq C_{21}. \notag
\end{align}
In the following, we assume $- \inf\limits_{\partial \Omega\times [0, T)} u_{\nu \nu} \geq \sup\limits_{\partial \Omega\times [0, T)} u_{\nu \nu}$, that is $\sup\limits_{\partial \Omega\times [0, T)} |u_{\nu \nu}|= -\inf\limits_{\partial \Omega\times [0, T)} u_{\nu \nu}$. For any $T_0 \in (0, T)$, denote $M = - \min\limits_{\partial \Omega\times [0, T_0]} u_{\nu \nu} >0$ and let $(x_1, t_1) \in \partial \Omega \times [0, T_0]$ such that $\min\limits_{\partial \Omega\times [0, T_0]} u_{\nu \nu} =u_{\nu \nu} (x_1, t_1)$.

Now we just need to show that the test function $P(x,t)$ defined below is non-positive in $\Omega_\mu \times [0, T_0]$
\begin{align}\label{6.23}
P(x, t) = [ 1 + \beta d(x)][D u \cdot (-D d)(x) - \varphi (x, u) ] + (A + \frac{1} {2}M)h(x),
\end{align}
where
\begin{align}
\label{6.24} \beta =&\max \{\frac{1}{\mu}, 5 n (2\kappa_{max}+\frac{1}{n}) \frac{C_6}{c_7} \}, \\
\label{6.25} A=& \max\{A_1, A_2, \frac{C_{20} +\frac{2}{n} (k-l) }{c_6} \}.
\end{align}

It is easy to know that $P \leq 0$ on $\partial_p( \Omega_\mu \times [0, T_0])$. Precisely, on $\partial \Omega \times [0, T_0]$, we have $d = h= 0$ and $- D d = \nu$, then
\begin{align}\label{6.26}
P(x, t) =0,  \quad \text{ on } \partial \Omega \times [0, T].
\end{align}
On $(\partial \Omega_\mu \setminus \partial \Omega) \times [0, T_0]$, we have $d = \mu$, then
\begin{align}\label{6.27}
P(x,t) \leq& (1+ \beta \mu) [|D u|  + |\varphi|]  + (A + \frac{1} {2}M) [-\mu + \mu^2]  \notag \\
\leq& C_{18}   - \frac{1}{2} \mu A <0,
\end{align}
since $A \ge 2\mu^{-1}C_{18} =:A_1$. On $ \Omega_\mu \times \{ t =0 \}$, we have $t = 0$, and $0 \leq d \leq \mu$.
For every $x \in \Omega_{\mu}$, there exists $y\in\partial\Omega$ such that $x=y+d(x)\nu(y)$. Thus we have
\begin{align*}
 &Du_0\cdot(-Dd)(x) - \varphi \left( {x,u_0 ( x )} \right) \notag\\
  = &Du_0\cdot(-Dd)(x) - Du_0\cdot(-Dd)(y) + Du_0\cdot(-Dd)(y) - \varphi \left( {x,u_0 ( x )} \right) \notag\\
   = &Du_0\cdot(-Dd)(x) - Du_0\cdot(-Dd)(y) + \varphi \left( {y,u_0 ( y )} \right) - \varphi \left( {x,u_0 ( x )} \right) \notag\\
   =&D\left[Du_0\cdot(-Dd)\right](z)\cdot(x-y)  -(D\varphi)\left(w,u_0(w)\right)\cdot(x-y) \notag\\
   =&d(x)\nu\cdot\{D\left[Du_0\cdot(-Dd)\right](z)\cdot\nu   -(D\varphi)\left(w,u_0(w)\right)\}. \notag
 \end{align*}
 Now  we obtain
 \begin{align}\label{6.28}
\left| Du_0\cdot(-Dd)(x) - \varphi \left( {x,u_0 ( x )} \right) \right| \le C d\left( x \right)  \ \text{in}\  \Omega_{\mu}.
\end{align}
where $C$ is a positive constants depending only on $|u_0|_{C^{2}(\bar\Omega)}$, $|\varphi|_{C^{1}(\bar\Omega)}$ and $\Omega$. Therefore
\begin{align}\label{6.29}
P\left( {x,0} \right) \le&  (1+\beta) \left| Du_0\cdot(-Dd)(x) - \varphi \left( {x,u_0 ( x )} \right)\right| + (A+\frac{M}{2})(-d( x )+d^2(x) )\notag\\
 \le&C(1+\beta)d(x)-\frac{A}{2}d(x) \notag \\
  <& 0,
\end{align}
where we use $A\ge 2C(1+\beta)$.

In the following, we want to show that $P$ attains its maximum only on $\partial \Omega \times [0, T_0]$. Then we can get
\begin{align}
0 \leq P_\nu (x_1, t_1)  =&  [u_{\nu \nu}(x_1, t_1) - \sum\limits_j {u_j d_{j\nu} }     - D_{\nu}\varphi] + (A + \frac{1} {2}M)  \notag\\
\leq& \min_{\partial \Omega \times [0, T_0]} u_{\nu \nu} + C + A + \frac{1} {2}M, \notag
\end{align}
hence \eqref{6.20} holds.

To prove $P$ attains its maximum only on $\partial \Omega \times (0, T_0]$, we assume $P$ attains its maximum at some point $(x_0, t_0) \in \Omega_\mu \times [0, T_0]$ by contradiction. Since $P(x,0)<0$ in $\Omega_\mu$,  we have $t_0>0$.

Rotating the coordinates, we can assume
\begin{align}
D^2 u(x_0, t_0) \text{ is diagonal}. \notag
\end{align}
In the following, all the calculations are at $(x_0, t_0)$. Firstly, we have
\begin{align}
\label{6.30} 0 = P_i =& \beta d_i [-\sum\limits_j{u_j d_j }   - \varphi ] + (1+ \beta d)[- \sum\limits_j{(u_{ji} d_j + u_j d_{ji})}   -\varphi_i-\varphi_u u_i ] + (A + \frac{1} {2}M)h_i\notag \\
 =&\beta d_i [-\sum\limits_j {u_j d_j } - \varphi ] + (1+ \beta d)[-u_{ii} d_i-\sum\limits_j { u_j d_{ji}}   -\varphi_i-\varphi_u u_i ] + (A + \frac{1} {2}M)h_i,  \\
\label{6.31} 0 \leq P_t =&  (1+ \beta d)[-\sum\limits_j u_{jt} d_j - \varphi_u u_t ],
\end{align}
and
\begin{align}\label{6.32}
0 \geq P_{ii}  =& \beta d_{ii} [-\sum\limits_j {u_j d_j } - \varphi] + 2\beta d_i [-\sum\limits_j {(u_{ji} d_j + u_j d_{ji})}   -\varphi_i-\varphi_u u_i]  \notag \\
&+ (1+ \beta d)[-\sum\limits_j {(u_{jii} d_j +2 u_{ji} d_{ji} + u_j d_{jii})} -\varphi_{ii} -2\varphi_{iu}u_i- \varphi_{uu}u_i^2 -\varphi_u u_{ii}] \notag \\
&+ (A + \frac{1} {2}M)h_{ii}   \notag \\
=&  \beta d_{ii} [-\sum\limits_j {u_j d_j } - \varphi] + 2\beta d_i [-u_{ii} d_i-\sum\limits_j {u_j d_{ji}}   -\varphi_i-\varphi_u u_i]  \notag \\
&+ (1+ \beta d)[-\sum\limits_j {u_{jii} d_j }  - 2u_{ii} d_{ii}  - \sum\limits_j {u_j d_{jii} }   -\varphi_{ii} -2\varphi_{iu}u_i- \varphi_{uu}u_i^2 -\varphi_u u_{ii}]  \notag \\
&+ (A + \frac{1} {2}M)h_{ii}   \notag \\
\geq&  - 2\beta u_{ii} d_i ^2  + (1+ \beta d)[-\sum\limits_j {u_{jii} d_j }  - 2u_{ii} d_{ii} -\varphi_u u_{ii}] \notag \\
&+ (A + \frac{1} {2}M)h_{ii} - C_{19},
\end{align}
where $C_{19}$ is a positive constant depending only on $\beta$, $|Du|_{C^0(\bar\Omega)}$, $|\varphi|_{C^2}$ and $\Omega$.

Since $D^2 u(x_0, t_0)$ is diagonal, we know $F^{ij} =0$ for $i \ne j$, where $F^{ij}  =: \frac{{\partial [ \log \frac{{\sigma _k (D^2 u )}}{{\sigma _l (D^2 u )}}]}}{{\partial u_{ij} }}$. Hence
\begin{align}\label{6.33}
0 \geq& \sum\limits_{i = 1}^n {F^{ii} P_{ii} } -P_t \notag \\
\geq&  - 2\beta \sum\limits_{i = 1}^n {F^{ii} u_{ii} d_i ^2 }  + (1+ \beta d)[-\sum\limits_{i,j} {F^{ii} u_{jii} d_j } - 2\sum\limits_{i = 1}^n {F^{ii} u_{ii} d_{ii} } -\varphi_u\sum\limits_{i = 1}^n {F^{ii} u_{ii}}]  \notag \\
&+ (A + \frac{1} {2}M)\sum\limits_{i = 1}^n {F^{ii} h_{ii} }  - C_{19}\sum\limits_{i = 1}^n {F^{ii} } - (1+ \beta d)[-\sum\limits_j u_{jt} d_j + u_t ] \notag \\
\geq&  - 2\beta \sum\limits_{i = 1}^n {F^{ii} u_{ii} d_i ^2 }  - 2(1+ \beta d)\sum\limits_{i = 1}^n {F^{ii} u_{ii} d_{ii} }   \notag \\
&+ [(A + \frac{1} {2}M)c_6  - C_{20}](\sum\limits_{i = 1}^n {F^{ii} }  + 1),
\end{align}
where $C_{20}$ depends only on $n$, $k$, $l$, $\beta$, $\Omega$, $|u_t|_{C^0}$, $|\log f|_{C^1}$, $|Du|_{C^0(\bar\Omega)}$, and $|\varphi|_{C^2}$.

Denote $B = \{ i:\beta d_i ^2  < \frac{1}{n},1 \leq i \leq n\}$ and $G = \{ i:\beta d_i ^2  \geq  \frac{1}{n},1 \leq i \leq n\}$. We choose $\beta \geq \frac{1}{\mu} > 1$, so
\begin{align}\label{6.34}
d_i ^2 < \frac{1}{n} = \frac{1}{n} |D d|^2, \quad i \in B.
\end{align}
It holds $ \sum_{i \in B} d_i ^2 < 1 = |D d|^2$, and $G$ is not empty. Hence for any $i \in G$, it holds
\begin{align}\label{6.35}
d_{i} ^2 \geq \frac{1}{n\beta}.
\end{align}
and from \eqref{6.30}, we have
\begin{align}\label{6.36}
u_{i i }  =  - \frac{{1-2d}}{1+ \beta d}(A + \frac{1}{2}M) + \frac{{\beta \big(-\sum\limits_j {u_j d_j }  - \varphi \big)}}
{1+ \beta d} + \frac{{-\sum\limits_j { u_j d_{ji}}   - D_i\varphi }}{{d_{i } }}.
\end{align}
We choose $A \geq 5 \beta \left(|Du|_{C^0} +  |\varphi|_{C^0}\right) + 5 \sqrt {n\beta} \left(|Du|_{C^0} |D^2d|_{C^0} +  |\varphi|_{C^1}\right) =:A_2$, such that for any $i \in G$
\begin{align}\label{6.37}
\left| \frac{{\beta [-\sum\limits_j {u_j d_j }  - \varphi ]}}{1+ \beta d} + \frac{{-\sum\limits_j { u_j d_{ji}}  -D_i \varphi }}{{d_{i } }}\right|  \leq \frac{A}{5},
\end{align}
then we can get
\begin{align}\label{6.38}
 - \frac{{6A}}{5} - \frac{M}{2} \leqslant u_{i i }  \leq  - \frac{A+M}{5}, \quad \forall \quad i \in G.
\end{align}
Also there is an $i_0  \in G$ such that
\begin{align}\label{6.39}
d_{i_0} ^2 \geq \frac{1}{n} |D d|^2 = \frac{1}{n}.
\end{align}
From \eqref{6.33}, we have
\begin{align}\label{6.40}
0 \geq \sum\limits_{i = 1}^n {F^{ii} P_{ii} } - P_t \geq&  - 2\beta \sum\limits_{i \in G} {F^{ii} u_{ii} d_i ^2 }  - 2\beta \sum\limits_{i \in B} {F^{ii} u_{ii} d_i ^2 }  \notag \\
& - 2(1+ \beta d)\sum\limits_{u_{ii}  > 0} {F^{ii} u_{ii} d_{ii} }  - 2(1+ \beta d)\sum\limits_{u_{ii}  < 0} {F^{ii} u_{ii} d_{ii} }  \notag \\
&+ [(A + \frac{1} {2}M)c_6  - C_{20}](\sum\limits_{i = 1}^n {F^{ii} }  + 1) \notag \\
\geq&  - 2\beta \sum\limits_{i \in G} {F^{ii} u_{ii} d_i ^2 }  - 2\beta \sum\limits_{i \in B} {F^{ii} u_{ii} d_i ^2 }  +  4 \kappa_{max} \sum\limits_{u_{ii}  < 0} {F^{ii} u_{ii} }\notag \\
&+ [(A + \frac{1} {2}M)c_6  - C_{20}](\sum\limits_{i = 1}^n {F^{ii} }  + 1),
\end{align}
where $\kappa_{max}=: \max |D^2 d|$. Direct calculations yield
\begin{align}\label{6.41}
 - 2\beta \sum\limits_{i \in G} {F^{ii} u_{ii} d_i ^2 }  \geq  - 2\beta F^{i_0 i_0 } u_{i_0 i_0 } d_{i_0 } ^2  \geq  - \frac{2\beta }
{{n}}F^{i_0 i_0 } u_{i_0 i_0 },
\end{align}
and
\begin{align}\label{6.42}
- 2\beta \sum\limits_{i \in B} {F^{ii} u_{ii} d_i ^2 }  \geq&  - 2\beta \sum\limits_{i \in B, u_{ii}  > 0} {F^{ii} u_{ii} d_i ^2 }
\geq - \frac{2}{n}  \sum\limits_{i \in B,u_{ii}  > 0} {F^{ii} u_{ii} }  \notag \\
\geq&  - \frac{2}{n}  \sum\limits_{u_{ii}  > 0} {F^{ii} u_{ii} } =  - \frac{2}{n}  [k-l- \sum\limits_{u_{ii}  < 0} {F^{ii} u_{ii} }].
\end{align}

For $u_{i_0 i_0 }<0$, we know from Lemma \ref{lem2.5},
\begin{align}
F^{i_0 i_0 }  \geq c_7 \sum\limits_{i = 1}^n {F^{ii} }.
\end{align}
 So it holds
\begin{align}\label{6.44}
0 \geq \sum\limits_{i = 1}^n {F^{ii} P_{ii} }  \geq& - \frac{2\beta }
{{n}}F^{i_0 i_0 } u_{i_0 i_0 }  +  (4 \kappa_{max}+\frac{2}{n}) \sum\limits_{u_{ii}  < 0} {F^{ii} u_{ii} }  \notag \\
&+ [(A + \frac{1} {2}M)c_5  - C_{20} - \frac{2}{n} (k-l) ](\sum\limits_{i = 1}^n {F^{ii} }  + 1)  \notag \\
\geq& \frac{2\beta } {{n}}c_7  \frac{A+M}{5} \sum\limits_{i = 1}^n {F^{ii} } -  (4\kappa_{max}+\frac{2}{n}) C_6 (1 + M) \sum\limits_{i = 1}^n {F^{ii} }\notag \\
>& 0,
\end{align}
since $\beta \geq 5 n (2\kappa_{max}+\frac{1}{n}) \frac{C_6}{c_7}$. This is a contradiction. So $P$ attains its maximum only on $\partial \Omega  \times (0, T_0]$. The proof of Lemma \ref{lem6.3} is complete.

\end{proof}

\subsection{Upper estimate of double normal second derivatives on boundary}

\begin{lemma} \label{lem6.5}
Under the assumptions in Theorem \ref{th6.1}, then we have
\begin{align}\label{6.45}
\sup_{\partial \Omega \times [0, T)} u_{\nu \nu}  \leq C_{21},
\end{align}
where $C_{21}$ depends on  $n$, $k$, $l$, $\Omega$, $|u_0|_{C^2}$, $|Du|_{C^{0}}$, $|u_t|_{C^0}$, $\min f$, $|f|_{C^{2}}$ and $|\varphi|_{C^{2}}$.
\end{lemma}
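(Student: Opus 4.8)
The plan is to reuse the barrier philosophy of Lemma \ref{lem6.3}, but now with the quantity $M := \sup_{\partial\Omega\times[0,T_0]} u_{\nu\nu}$ (for a fixed $T_0\in(0,T)$) built into the coefficients. If $-\inf u_{\nu\nu}\ge\sup u_{\nu\nu}$ there is nothing to prove by Lemma \ref{lem6.3}, so I may assume $\sup_{\partial\Omega\times[0,T_0]}|u_{\nu\nu}|=M$, whence Lemma \ref{lem6.2} gives $|D^2u|\le C_6(1+M)$ on $\Omega\times[0,T_0]$; let $(x_1,t_1)$ realize $M$ (the case $t_1=0$ being trivial by $|u_0|_{C^2}$). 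I would test with
\[
P(x,t)=[1+\beta d(x)]\big(\varphi(x,u)-Du\cdot(-Dd)(x)\big)+\big(A+\tfrac12 M\big)h(x),
\]
where $h=-d+d^2$ is as in Lemma \ref{lem6.4} and $\beta,A$ are large universal constants, $\mu$ small, to be fixed. Since $\varphi-Du\cdot(-Dd)=-(u_\nu-\varphi)$ near $\partial\Omega$, one has $P\equiv0$ on $\partial\Omega\times[0,T_0]$, and as in \eqref{6.27}--\eqref{6.29} one gets $P<0$ on the rest of the parabolic boundary once $A$ is large. If $P\le0$ holds on all of $\Omega_\mu\times[0,T_0]$, then $P$ attains its maximum on $\partial\Omega$, so $P_\nu(x_1,t_1)\ge0$; computing, $P_\nu(x_1,t_1)=-u_{\nu\nu}(x_1,t_1)+(A+\tfrac12 M)+C=-M+A+\tfrac12 M+C$, which forces $M\le 2(A+C)$, and letting $T_0\uparrow T$ finishes.

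So the substance is the interior inequality: at an interior maximum $(x_0,t_0)\in\Omega_\mu\times(0,T_0]$ of $P$ I must derive $\sum F^{ij}P_{ij}-P_t>0$, contradicting the maximum-principle inequality. Diagonalizing $D^2u(x_0,t_0)$, the first-order conditions $P_i=0$ force, for $i$ in the good set $G=\{i:\beta d_i^2\ge\frac1n\}$, that $u_{ii}\approx A+\tfrac12 M$ is large and \emph{positive}; choose $i_0\in G$ with $d_{i_0}^2\ge\frac1n$. Differentiating the equation converts the third-order terms into $\partial\log f+u_{t,\cdot}$, whose $u_t$-part cancels $-P_t$; Euler's identity $\sum F^{ii}u_{ii}=k-l$ handles the $\varphi_u u_{ii}$ term; Lemma \ref{lem6.4} yields $(A+\tfrac12 M)\sum F^{ii}h_{ii}\ge(A+\tfrac12 M)c_6(\sum F^{ii}+1)$; and the concavity of $d$ (i.e. $D^2d\le0$ in $\Omega$) lets me drop the good-sign part of $2\sum F^{ii}u_{ii}d_{ii}$, leaving only a contribution $\gtrsim-\beta\kappa_{\max}\sum_{u_{ii}<0}F^{ii}|u_{ii}|$. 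Thus I need the favourable terms $\frac{2\beta}{n}F^{i_0i_0}u_{i_0i_0}$ and $(A+\tfrac12 M)c_6\sum F^{ii}$ to beat the negative contributions, which are governed by $\sum_{u_{ii}<0}F^{ii}|u_{ii}|$ plus $M$-independent errors $C(\sum F^{ii}+1)$.

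The main obstacle is precisely that, contrary to Lemma \ref{lem6.3}, the dominant Hessian entry $u_{i_0i_0}$ is now positive, so Lemma \ref{lem2.5} no longer delivers the crucial lower bound $F^{i_0i_0}\gtrsim\sum F^{ii}$. Instead I would invoke Lemma \ref{lem2.6}, whose hypotheses I can check: $|D^2u|\le C_6(1+M)$ forces $u_{i_0i_0}\ge\tfrac12(A+M)\ge\delta\lambda_{\max}$ for a universal $\delta$, so $u_{i_0i_0}$ is comparable to the largest eigenvalue. This necessitates a case split. In the first case the smallest eigenvalue of $D^2u(x_0,t_0)$ is $\le-\varepsilon(A+M)$ for a small universal $\varepsilon$; then Lemma \ref{lem2.6} gives $F^{i_0i_0}\ge c_1\sum F^{ii}$, and the estimate closes as in Lemma \ref{lem6.3} (choose $\beta$ large, then $\mu$ small). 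In the complementary case $D^2u(x_0,t_0)$ is almost positive, so $\sum_{u_{ii}<0}F^{ii}|u_{ii}|\le\varepsilon C_6(1+M)\sum F^{ii}$, and with $\varepsilon,\mu$ chosen small the single term $(A+\tfrac12 M)c_6(\sum F^{ii}+1)$ already outweighs every negative contribution. I expect the genuinely delicate point to be fixing the order in which the universal constants $\varepsilon,\beta,A,\mu$ are chosen so that both cases close at once, and verifying that the resulting bound $C_{21}$ depends only on the listed data (in particular not on $M$ or $|D^2u|$). With Lemmas \ref{lem6.2}, \ref{lem6.3} and \ref{lem6.5} in hand, Theorem \ref{th6.1} follows with $M_2=C_6(1+\max\{C_{17},C_{21}\})$.
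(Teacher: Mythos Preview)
Your proposal is correct and follows essentially the same route as the paper. Your test function $P$ is exactly the negative of the paper's $\widetilde P$, so your interior-maximum contradiction is the paper's interior-minimum contradiction; your two-case split (smallest eigenvalue $\le -\varepsilon(A+M)$ versus not) is the paper's three-case split with Cases~I and~II merged, and in both arguments the decisive ingredient in the nontrivial case is Lemma~\ref{lem2.6}, applied after checking $u_{i_0i_0}\ge\delta\,\lambda_{\max}$ via Lemma~\ref{lem6.2}. Two small corrections to your bookkeeping: the bad contribution the paper tracks is $(4\kappa_{\max}+\tfrac{2}{n})\sum_{u_{ii}>0}F^{ii}u_{ii}$ (equivalently, via Euler's identity, a constant plus $\sum_{u_{ii}<0}F^{ii}|u_{ii}|$), and it carries \emph{no} factor of $\beta$---the $\beta$ in $-2\beta\sum_{i\in B}F^{ii}u_{ii}d_i^2$ disappears after using $\beta d_i^2<\tfrac1n$ on $B$. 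This matters for exactly the circularity you flagged: with the correct ($\beta$-free) bad term one fixes $\varepsilon$ first (so that Case~B closes against $c_6$), then $c_1=c_1(\varepsilon,\delta)$ from Lemma~\ref{lem2.6}, then $\beta$ large (so that Case~A closes), then $A$ large for the parabolic-boundary inequalities; the paper carries this out with the explicit choices $\varepsilon=\tfrac{c_6}{10(4\kappa_{\max}+2/n)}$, $\delta=\tfrac{2}{5C_6}$, and $\beta\ge 5n(2\kappa_{\max}+1)C_6/c_1$.
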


\begin{proof}

Firstly, we assume $ \sup_{\partial \Omega \times [0, T)} u_{\nu \nu} > 0$, otherwise there is nothing to prove. Also, if $\sup_{\partial \Omega \times [0, T)} u_{\nu \nu} < - \inf_{\partial \Omega \times [0, T)} u_{\nu \nu}$, that is $\sup_{\partial \Omega \times [0, T)} |u_{\nu \nu}|= -\inf_{\partial \Omega \times [0, T)} u_{\nu \nu}$, we can easily get from Lemma \ref{lem6.3}
\begin{align}\label{6.46}
\sup_{\partial \Omega \times [0, T)} u_{\nu \nu} < - \inf_{\partial \Omega \times [0, T)} u_{\nu \nu} \leq C_{17}.
\end{align}
In the following, we assume $\sup_{\partial \Omega \times [0, T)} u_{\nu \nu} \geq - \inf_{\partial \Omega \times [0, T)} u_{\nu \nu}$, that is $\sup_{\partial \Omega \times [0, T)} |u_{\nu \nu}|= \sup_{\partial \Omega \times [0, T)} u_{\nu \nu}$. For any $T_0 \in (0, T)$, denote $M = \max_{\partial \Omega \times [0, T_0]} u_{\nu \nu} >0$ and let $(\widetilde{x_1}, \widetilde{t_1}) \in \partial \Omega \times [0, T_0]$ such that $\max_{\partial \Omega \times [0, T_0]} u_{\nu \nu} =u_{\nu \nu} (\widetilde{x_1}, \widetilde{t_1})$.

Now we consider the test function
\begin{align}\label{6.47}
\widetilde{P}(x,t) = [ 1 + \beta d(x)][D u \cdot (-D d)(x) - \varphi (x, u) ] - (A + \frac{1} {2}M)h(x),
\end{align}
where
\begin{align*}
\beta =&\max \{\frac{1}{\mu}, \frac{5 n}{2} (2\kappa_{max}+\frac{1}{n}) \frac{C_6}{c_1} \}, \\
A=& \max\{A_1, A_2, A_3, A_4, \frac{C_{23}}{c_6} \}.
\end{align*}

Similarly, we first show that $\widetilde{P} \geq 0$ on $\partial_p( \Omega_\mu \times [0, T_0])$. Precisely, on $\partial \Omega \times [0, T_0]$, we have
\begin{align}\label{6.48}
\widetilde{P}(x, t) =0.
\end{align}
On $(\partial \Omega_\mu \setminus \partial \Omega) \times [0, T_0]$, we have $d = \mu$, and then
\begin{align}\label{6.49}
\widetilde{P}(x,t) \geq& -(1+ \beta \mu) \left(|D u|  +  |\varphi|\right)  + (A + \frac{1} {2}M) [\mu - \mu^2]  \notag \\
\geq&- C_{18}   + \frac{1}{2} \mu A >0,
\end{align}
since $A \ge 2\mu^{-1}C_{18} =:A_1$. On $ \Omega_\mu \times \{ t =0 \}$, we have from \eqref{6.28}
\begin{align}
\widetilde{P}\left( {x,0} \right) \ge&  -(1+\beta) \left|  Du_0\cdot(-Dd)(x) - \varphi (x,u_0 ( x )) \right| + (A+\frac{M}{2})(d( x )-d^2(x) )\notag\\
 \ge& - C(1+\beta)d(x) +\frac{A}{2}d(x)\notag\\
  >& 0,
\end{align}
where we used $A\ge 2C(1+\beta)$.

In the following, we want to prove $\widetilde{P}$ attains its minimum only on $\partial \Omega \times [0, T_0]$. Then we can get
\begin{align}\label{6.51}
0 \geq \widetilde{P}_\nu (\widetilde{x_1}, \widetilde{t_1})  =&  [u_{\nu \nu}(\widetilde{x_1}, \widetilde{t_1}) - \sum\limits_j {u_j d_{j\nu} }     - D_{\nu}\varphi] - (A + \frac{1} {2}M)  \notag\\
\geq& \max_{\partial \Omega} u_{\nu \nu} - C - A  - \frac{1} {2}M,
\end{align}
hence \eqref{6.45} holds.

To prove $\widetilde{P}$ attains its minimum only on $\partial \Omega \times [0, T_0]$, we assume $\widetilde{P}$ attains its minimum at some point  $(\widetilde{x_0}, \widetilde{t_0}) \in \Omega_\mu \times (0, T_0]$ by contradiction.

 Rotating the coordinates, we can assume
\begin{align}\label{6.52}
D^2 u(\widetilde{x_0}, \widetilde{t_0}) \text{ is diagonal}.
\end{align}
In the following, all the calculations are at $(\widetilde{x_0}, \widetilde{t_0})$.

Firstly, we have
\begin{align}
\label{6.53} 0 = \widetilde{P}_i =& \beta d_i [-\sum\limits_j {u_j d_j }  - \varphi ] + (1+ \beta d)[-\sum\limits_j {(u_{ji} d_j + u_j d_{ji})}   -\varphi_i- \varphi_u u_i ] - (A + \frac{1} {2}M)h_i\notag \\
 =&\beta d_i [-\sum\limits_j {u_j d_j }  - \varphi ] + (1+ \beta d)[-u_{ii} d_i-\sum\limits_j { u_j d_{ji}}   -\varphi_i- \varphi_u u_i ] - (A + \frac{1} {2}M)h_i,\\
\label{6.54} 0 \geq \widetilde{P}_t =&  (1+ \beta d)[-\sum\limits_j u_{jt} d_j +  \varphi_u u_t ],
\end{align}
and
\begin{align}\label{6.55}
0 \leq \widetilde{P}_{ii}  =& \beta d_{ii} [-\sum\limits_j {u_j d_j } - \varphi] + 2\beta d_i [-\sum\limits_j {(u_{ji} d_j + u_j d_{ji})}   -\varphi_i- \varphi_u u_i]  \notag \\
&+ (1+ \beta d)[-\sum\limits_j {(u_{jii} d_j +2 u_{ji} d_{ji} + u_j d_{jii})} - \varphi_{ii} - 2\varphi_{iu}u_i-\varphi_{uu}u_i^2-\varphi_{u}u_{ii} ]\notag \\
& - (A + \frac{1} {2}M)h_{ii}   \notag \\
=&  \beta d_{ii} [-\sum\limits_j {u_j d_j } + u- \varphi] + 2\beta d_i [-u_{ii} d_i-\sum\limits_j {u_j d_{ji}}   + u_i- \varphi_i]  \notag \\
&+ (1+ \beta d)[-\sum\limits_j {u_{jii} d_j }  - 2u_{ii} d_{ii}  - \sum\limits_j {u_j d_{jii} }  - \varphi_{ii} - 2\varphi_{iu}u_i-\varphi_{uu}u_i^2-\varphi_{u}u_{ii}]  \notag \\
&- (A + \frac{1} {2}M)h_{ii}   \notag \\
\leq&  - 2\beta u_{ii} d_i ^2  + (1+ \beta d)[-\sum\limits_j {u_{jii} d_j }  - 2u_{ii} d_{ii} -\varphi_{u} u_{ii}]- (A + \frac{1} {2}M)h_{ii} + C_{22}.
\end{align}

Since $D^2 u(\widetilde{x_0}, \widetilde{t_0})$ is diagonal, we know $F^{ij} =0$ for $i \ne j$. Hence
\begin{align}\label{6.56}
0 \leq& \sum\limits_{i = 1}^n {F^{ii} \widetilde{P}_{ii} }  - \widetilde{P}_t\notag \\
\leq&  - 2\beta \sum\limits_{i = 1}^n {F^{ii} u_{ii} d_i ^2 }  + (1+ \beta d)\big[-\sum\limits_{i,j} {F^{ii} u_{jii} d_j }  - 2\sum\limits_{i = 1}^n {F^{ii} u_{ii} d_{ii} } -\varphi_{u}\sum\limits_{i = 1}^n {F^{ii} u_{ii} }\big]  \notag \\
&- (A + \frac{1} {2}M)\sum\limits_{i = 1}^n {F^{ii} h_{ii} }  + C_{22}\sum\limits_{i = 1}^n {F^{ii} }  -(1+ \beta d)[-\sum\limits_j u_{jt} d_j - \varphi_u u_t] \notag \\
\leq&  - 2\beta \sum\limits_{i = 1}^n {F^{ii} u_{ii} d_i ^2 }  - 2(1+ \beta d)\sum\limits_{i = 1}^n {F^{ii} u_{ii} d_{ii} }   \notag \\
&+ [-(A + \frac{1} {2}M)c_6  + C_{23}](\sum\limits_{i = 1}^n {F^{ii} }  + 1).
\end{align}

Denote $B = \{ i:\beta d_i ^2  < \frac{1}{n},1 \leq i \leq n\}$ and $G = \{ i:\beta d_i ^2  \geq  \frac{1}{n},1 \leq i \leq n\}$. We choose $\beta \geq \frac{1}{\mu} >1$, so
\begin{align}\label{6.57}
d_i ^2 < \frac{1}{n} = \frac{1}{n} |D d|^2, \quad i \in B.
\end{align}
It holds $ \sum_{i \in B} d_i ^2 < 1 = |D d|^2$, and $G$ is not empty. Hence for any $i \in G$, it holds
\begin{align}\label{6.58}
d_{i} ^2 \geq \frac{1}{n\beta}.
\end{align}
and from \eqref{6.53}, we have
\begin{align}\label{6.59}
u_{i i }  =  \frac{{1-2d}}{1+ \beta d}(A + \frac{1}{2}M) + \frac{{\beta [-\sum\limits_j {u_j d_j }  - \varphi ]}}
{1+ \beta d} + \frac{{-\sum\limits_j { u_j d_{ji}}-D_i \varphi }}{{d_{i } }}.
\end{align}
We choose $A \geq 5 \beta \left(|Du|_{C^0} +  |\varphi|_{C^0}\right) + 5 \sqrt {n\beta} \left(|Du|_{C^0} |D^2d|_{C^0} +  |\varphi|_{C^1}\right) =:A_2$, such that for any $i \in G$
\begin{align}\label{6.60}
| \frac{{\beta [-\sum\limits_j {u_j d_j }  - \varphi ]}}{1+ \beta d} + \frac{{-\sum\limits_j { u_j d_{ji}}- D_i \varphi }}{{d_{i } }}|
\leq \frac{A}{5},
\end{align}
then we can get
\begin{align}\label{6.61}
\frac{3A}{5}+ \frac{2M}{5} \leqslant u_{i i }  \leq \frac{6A}{5}+ \frac{M}{2}, \quad \forall \quad i \in G.
\end{align}
Also there is an $i_0  \in G$ such that
\begin{align}\label{6.62}
d_{i_0} ^2 \geq \frac{1}{n} |D d|^2 = \frac{1}{n}.
\end{align}
From \eqref{6.56}, we have
\begin{align}\label{6.63}
0 \leq \sum\limits_{i = 1}^n {F^{ii} \widetilde{P}_{ii} }  -\widetilde{P}_t\leq&  - 2\beta \sum\limits_{i \in G} {F^{ii} u_{ii} d_i ^2 }  - 2\beta \sum\limits_{i \in B} {F^{ii} u_{ii} d_i ^2 }  \notag \\
&- 2(1+ \beta d)\sum\limits_{u_{ii}  > 0} {F^{ii} u_{ii} d_{ii} }  - 2(1+ \beta d)\sum\limits_{u_{ii}  < 0} {F^{ii} u_{ii} d_{ii} }  \notag \\
&+ [-(A + \frac{1} {2}M)c_6  + C_{23}](\sum\limits_{i = 1}^n {F^{ii} }  + 1) \notag \\
\leq&  - 2\beta \sum\limits_{i \in G} {F^{ii} u_{ii} d_i ^2 }  - 2\beta \sum\limits_{i \in B} {F^{ii} u_{ii} d_i ^2 }  \notag \\
&+4 \kappa_{max} \sum\limits_{u_{ii}  > 0} {F^{ii} u_{ii} } \notag \\
&+ [-(A + \frac{1} {2}M)c_6 + C_{23}](\sum\limits_{i = 1}^n {F^{ii} }  + 1),
\end{align}
where $\kappa_{max}=: \max |D^2 d|$. Direct calculations yield
\begin{align}\label{6.64}
 - 2\beta \sum\limits_{i \in G} {F^{ii} u_{ii} d_i ^2 }  \leq  - 2\beta F^{i_0 i_0 } u_{i_0 i_0 } d_{i_0 } ^2  \leq  - \frac{2\beta }
{{n}}F^{i_0 i_0 } u_{i_0 i_0 },
\end{align}
and
\begin{align}\label{6.65}
- 2\beta \sum\limits_{i \in B} {F^{ii} u_{ii} d_i ^2 }  \leq&  - 2\beta \sum\limits_{i \in B, u_{ii}  < 0} {F^{ii} u_{ii} d_i ^2 }
\leq - \frac{2}{n}\sum\limits_{i \in B,u_{ii} < 0} {F^{ii} u_{ii} }  \notag \\
\leq&  - \frac{2}{n}\sum\limits_{u_{ii} < 0} {F^{ii} u_{ii} } \notag \\
=&- \frac{2}{n}[k-l - \sum\limits_{u_{ii} > 0} {F^{ii} u_{ii} }].
\end{align}

So it holds
\begin{align}\label{6.66}
0 \leq& \sum\limits_{i = 1}^n {F^{ii} \widetilde{P}_{ii} } - \widetilde{P}_t  \notag \\
\leq& - \frac{2\beta } {{n}}F^{i_0 i_0 } u_{i_0 i_0 }  +  (4\kappa_{max}+ \frac{2}{n}) \sum\limits_{u_{ii}  > 0} {F^{ii} u_{ii} } + [- (A + \frac{1} {2}M)c_6 + C_{23}](\sum\limits_{i = 1}^n {F^{ii} }  + 1).
\end{align}
We divide into three cases to prove the result. Without loss of generality, we assume that $i_0=1 \in G$, and $u_{22} \geq \cdots \geq u_{nn}$.

$\blacklozenge$ CASE I: $u_{nn} \geq 0$.

In this case, we have
\begin{align}\label{6.67}
(4\kappa_{max}+ \frac{2}{n}) \sum\limits_{u_{ii}  > 0} {F^{ii} u_{ii} } =  (4\kappa_{max}+ \frac{2}{n}) \sum\limits_{i =1}^n {F^{ii} u_{ii} }=  (4\kappa_{max}+ \frac{2}{n})(k-l).
\end{align}
Hence from \eqref{6.66} and \eqref{6.67}
\begin{align}\label{6.68}
0 \leq \sum\limits_{i = 1}^n {F^{ii} \widetilde{P}_{ii} } - \widetilde{P}_t  \leq& (4\kappa_{max}+ \frac{2}{n}) \sum\limits_{u_{ii}  > 0} {F^{ii} u_{ii} }+ [- (A + \frac{1} {2}M)c_6 + C_{23}](\sum\limits_{i = 1}^n {F^{ii} }  + 1) \notag \\
\leq&  k(4\kappa_{max}+ 1)  + \left[- (A + \frac{1} {2}M)c_6 + C_{23}\right] \notag \\
<& 0,
\end{align}
since $A \geq \frac{  k(4\kappa_{max}+ 1)  +C_{23}}{c_6} =: A_3$. This is a contradiction.

$\blacklozenge$ CASE II: $u_{nn} < 0$ and $-u_{nn} < \frac{c_6}{10 (4\kappa_{max}+ \frac{2}{n})}u_{11}$.

In this case, we have
\begin{align}\label{6.69}
(4\kappa_{max}+ \frac{2}{n}) \sum\limits_{u_{ii}  > 0} {F^{ii} u_{ii} } =& (8\kappa_{max}+ \frac{2}{n}) [ k-l  - \sum\limits_{u_{ii} < 0} {F^{ii} u_{ii} }] \notag  \\
\leq& (4\kappa_{max}+ \frac{2}{n}) [ k-l - u_{nn}\sum\limits_{i=1}^n {F^{ii} }] \notag  \\
<& k(4\kappa_{max}+ 1)   +  \frac{c_6}{10}u_{11} \sum\limits_{i=1}^n {F^{ii} } \notag  \\
\leq& k(4\kappa_{max}+ 1) + \frac{c_6}{10} (\frac{6A}{5}+ \frac{M}{2} )\sum\limits_{i=1}^n {F^{ii} }.
\end{align}
Hence from \eqref{6.66} and \eqref{6.69}
\begin{align}\label{6.70}
0 \leq& \sum\limits_{i = 1}^n {F^{ii} \widetilde{P}_{ii} } - \widetilde{P}_t \notag \\
\leq&  (4\kappa_{max}+ \frac{2}{n}) \sum\limits_{u_{ii}  > 0} {F^{ii} u_{ii} } + \big[- (A + \frac{1} {2}M)c_6 + C_{23}\big]\big(\sum\limits_{i = 1}^n {F^{ii} }  + 1\big) \notag \\
<&  k(4\kappa_{max}+ 1)  + \frac{c_6}{10} \big(\frac{6A}{5}+ \frac{M}{2} \big)\sum\limits_{i=1}^n {F^{ii} } + \big[- (A + \frac{1} {2}M)c_6 + C_{23}\big]\big(\sum\limits_{i = 1}^n {F^{ii} }  + 1\big) \notag \\
<&0,
\end{align}
since $A \geq \max\{\frac{  k(4\kappa_{max}+ 1)  +C_{23}}{c_6}, \frac{25 C_{23}}{3c_6} \} =: A_4$. This is a contradiction.

$\blacklozenge$ CASE III: $u_{nn} < 0$ and $-u_{nn} \geq \frac{c_6}{10 (4\kappa_{max}+ \frac{2}{n})} u_{11} $.

In this case, we have $u_{11} \geq \frac{3A}{5}+ \frac{2M}{5}$, and $u_{22} \leq C_6 (1+M)$. So
\begin{align} \label{6.71}
u_{11} \geq \frac{2}{5C_6} u_{22}.
\end{align}
Let $\delta = \frac{2}{5C_6}$ and $\varepsilon =\frac{c_6}{10 (4\kappa_{max}+ \frac{2}{n})}$, \eqref{2.12} in Lemma \ref{lem2.6} holds, that is
\begin{align} \label{6.72}
F^{11} \geq c_1 \sum\limits_{i = 1}^n {F^{ii} },
\end{align}
where $c_1 = \frac{n} {k}\frac{{k - l}}{{n - l}}\frac{c_0^2} {{n - k + 1}}$ and  $c_0 = \min \{ \frac{{\varepsilon ^2 \delta ^2}}{{2(n - 2)(n - 1)}},\frac{{\varepsilon ^2 \delta }}{{4(n- 1)}}\}$. Hence from \eqref{6.66} and \eqref{6.72}
\begin{align}\label{6.73}
0 \leq \sum\limits_{i = 1}^n {F^{ii} \widetilde{P}_{ii} }  \leq& - \frac{2\beta }
{{n}}F^{11 } u_{11}  +  (4\kappa_{max}+ \frac{2}{n}) \sum\limits_{u_{ii}  > 0} {F^{ii} u_{ii} } \notag \\
&+ \left[- (A + \frac{1} {2}M)c_6 + C_{23}\right]\left(\sum\limits_{i = 1}^n {F^{ii} }  + 1\right)\notag \\
\leq& - \frac{2\beta } {{n}}c_1  (\frac{3A}{5}+ \frac{2M}{5}) \sum\limits_{i = 1}^n {F^{ii} } + (4\kappa_{max}+1) C_6(1 + M) \sum\limits_{i = 1}^n {F^{ii} }\notag \\
<& 0,
\end{align}
since $\beta \geq 5n(2\kappa_{max}+1) \frac{C_6}{c_1}$. This is a contradiction.

So $\widetilde{P}(x,t)$ attains its maximum only on $\partial \Omega\times [0, T_0]$. The proof of Lemma \ref{lem6.5} is complete.

\end{proof}

\section{Proof of Theorem 1.1}

For the Neumann problem of parabolic Hessian quotient equations \eqref{1.1}, we have established the $|u_t|$, $C^0$, $C^1$ and $C^2$ estimates in Section 3, Section 4, Section 5, Section 6, respectively. Then the equation \eqref{1.1} is uniformly parabolic in $\overline \Omega \times [0, T)$.
Due to the concavity of operator $\log \big(\frac{\sigma _k (\lambda)}{\sigma _l (\lambda)} \big)$ in $\Gamma_k$, we can get the global H\"{o}lder estimates of second derivative following the discussions in \cite{LT86}, the uniform estimates of all higher derivatives of $u$ can be derived by differentiating the equation \eqref{1.1} and apply the Schauder theory for linear, uniformly parabolic equations. Applying the method of continuity (see \cite{GT}, Theorem 17.28), we can get the existence of smooth $k$-admissible solution $u(x, t)$.

By the uniform estimates of $u$ and the uniform parabolicity of equation \eqref{1.1}, the solution $u(x,t)$ exists for all time $t \geq 0$, that is $T = +\infty$.

Following the discussions in \cite{SS03}, we can obtain the smooth convergence of $u(x,t)$. That is,
\begin{align}
\lim\limits_{t \rightarrow +\infty}u ( {x,t} )=u^\infty (x),
\end{align}
and $u^\infty (x)$ satisfies the equation \eqref{1.7}.

If $f$ satisfies $\eqref{1.4}$, we know from \eqref{3.2}
\begin{align}
|u_t \left( {x,t} \right)| \leq C_{24} e^{-c_f t}.
\end{align}
Hence the rate of convergence is exponential.

\section{Proof of Theorem 1.2}

In this section, we prove Theorem \ref{th1.2}, following the ideas of Schn\"{u}rer-Schwetlick \cite{SS04}, Qiu-Xia \cite{QiuXia16} and Ma-Wang-Wei \cite{MWW18}.

\subsection{elliptic problem}

Firstly, we solve the following elliptic problem, which is the key of proof of Theorem \ref{th1.2}.

\begin{theorem}\label{th8.1}
Let $\Omega$ is a strictly convex bounded domain in $\mathbb{R}^n$ with smooth boundary. Assume that $u_0$ is given as in Theorem \ref{th1.2} and $f$ is a positive smooth function, $f\in C^{\infty}(\overline\Omega)$.
Then there exists a unique $s\in\mathbb{R}$ and a $k$-convex function $u\in C^{\infty}(\overline\Omega)$ solving
\begin{align} \label{8.1}
\left\{ {\begin{array}{*{20}c}
   {\frac{\sigma _k( {D^2 u})}{\sigma _l( {D^2 u})} = f\left( x \right)e^s, \quad x \in \Omega,}  \\
   {\frac{{\partial u}}{{\partial \nu}} = \varphi \left( x \right), \qquad \qquad x \in \partial \Omega.}  \\
\end{array}} \right.
\end{align}
Moreover, the solution $u$ is unique up to a constant.
\end{theorem}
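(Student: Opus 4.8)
The statement is of ``nonlinear eigenvalue'' type: the constant $s$ is precisely the free parameter that makes the otherwise overdetermined problem $\sigma_k(D^2u)/\sigma_l(D^2u)=f(x)$, $\partial_\nu u=\varphi(x)$ solvable, and then $u$ is determined only up to an additive constant. The plan is to solve the coupled system for the pair $(u,s)$, normalising by $\int_\Omega u\,dx=0$, via the method of continuity (as in \cite{GT}, Theorem 17.28) together with the a priori estimates of Sections 3--6 in their stationary, $u$-independent form, and then to extract uniqueness from the maximum principle. Concretely, one picks a smooth path $(f_\tau,\varphi_\tau)$, $\tau\in[0,1]$, of data with $f_\tau>0$ joining $(f,\varphi)$ at $\tau=1$ to a model pair at $\tau=0$ with an explicit solution --- e.g. $u\equiv\tfrac{A}{2}|x|^2$ minus its mean, $f_0\equiv A^{k-l}C_n^k/C_n^l$, $\varphi_0\equiv A\,x\!\cdot\!\nu$, $s_0=0$, which is $k$-convex since $\Omega$ is bounded --- and, for each $\tau$, considers the problem of finding a $k$-convex $u$ with $\int_\Omega u=0$ and $s\in\mathbb{R}$ solving $\sigma_k(D^2u)/\sigma_l(D^2u)=f_\tau e^{s}$ in $\Omega$, $\partial_\nu u=\varphi_\tau$ on $\partial\Omega$, and shows the set of solvable $\tau$ is non-empty, open and closed.

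The heart of the argument --- and what I expect to be the main obstacle --- is the zeroth order bound $\operatorname{osc}_{\overline\Omega}u\le C$ (equivalently $|u|_{C^0}\le C$ after the normalisation), with $C$ depending only on $n,k,l,\Omega,\min f,\max f$ and $|\varphi|_{C^0}$. Since here $\varphi_u=0$ and neither \eqref{1.4} nor \eqref{1.5} is available, the argument of Section 4 does not apply and one must reason elliptically: $k$-convexity and the equation force, via the Newton--MacLaurin inequality \eqref{2.5}, $\Delta u\ge c>0$, so $u$ lies below an explicit quadratic determined by its boundary trace, while the reverse bound and the control of the boundary trace itself come from comparison with radial sub- and supersolutions $A(|x-x_\ast|^2-R^2)$, using the strict convexity of $\partial\Omega$ and the Neumann data $\partial_\nu u=\varphi$.

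Granting the oscillation bound, the remaining estimates follow the scheme already in the paper. The gradient estimate of Section 5 and the global second order estimate of Section 6 go through essentially verbatim --- with the auxiliary functions built from $\operatorname{osc}u$ in place of $|u|_{C^0}$, the boundary terms controlled by the strict convexity of $\Omega$, and Lemmas \ref{lem2.4} and \ref{lem2.6} used exactly as in Section 6 --- giving $|Du|_{C^0}+|D^2u|_{C^0}\le C$; the equation together with the $C^2$ bound then yields $|s|\le C$; and the concavity of $\log(\sigma_k/\sigma_l)$ with the Evans--Krylov and Schauder theories gives $\|u\|_{C^{m,\alpha}(\overline\Omega)}\le C_m$ for all $m$, all constants uniform along the path. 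Closedness of the solvable set is then immediate from Arzel\`a--Ascoli, and openness follows from the implicit function theorem: the linearisation at $(u,s)$ is $a^{ij}v_{ij}-\delta s=g$ in $\Omega$, $\partial_\nu v=\psi$ on $\partial\Omega$, $\int_\Omega v=0$, with $a^{ij}=\partial(\log\tfrac{\sigma_k}{\sigma_l})/\partial u_{ij}$ positive definite; the scalar unknown $\delta s$ compensates the one-dimensional cokernel of this oblique problem and the mean-zero constraint removes its kernel, so the augmented map is an isomorphism and the problem is solvable at $\tau=1$.

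For uniqueness, suppose $(u_1,s_1)$ and $(u_2,s_2)$ both solve \eqref{8.1}, say with $s_1\ge s_2$. Setting $a^{ij}=\int_0^1\partial(\log\tfrac{\sigma_k}{\sigma_l})/\partial u_{ij}\big(D^2(tu_1+(1-t)u_2)\big)\,dt$, which is positive definite because $\Gamma_k$ is convex, one has $a^{ij}(u_1-u_2)_{ij}=s_1-s_2\ge0$ in $\Omega$ while $(u_1-u_2)_\nu=0$ on $\partial\Omega$; hence $u_1-u_2$ attains its maximum on $\partial\Omega$, and Hopf's lemma forces $u_1-u_2$ to be a constant, so that $s_1=s_2$ and the solution of \eqref{8.1} is unique up to an additive constant.
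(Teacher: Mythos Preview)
Your approach via direct continuity on the pair $(u,s)$ with the normalisation $\int_\Omega u=0$ is genuinely different from the paper's and is viable in principle, but there is a gap in the ordering of your estimates. The paper instead regularises to $(*_{\varepsilon,s})$: $\sigma_k(D^2u)/\sigma_l(D^2u)=f(x)e^{s+\varepsilon u}$ with $u_\nu=\varphi$, which for each $\varepsilon>0$ is solvable by the elliptic Neumann theory of \cite{CZ16}. Since $u_{\varepsilon,s}=u_{\varepsilon,0}-s/\varepsilon$, comparison with the sub/supersolutions $u_0\pm M/\varepsilon$ gives $u_0-M/\varepsilon<u_{\varepsilon,0}<u_0+M/\varepsilon$, and strict monotonicity in $s$ then produces a unique $s_\varepsilon\in(-M,M)$ with $u_{\varepsilon,s_\varepsilon}(y_0)=u_0(y_0)$ at a fixed point $y_0$; a gradient estimate independent of $|u|_{C^0}$ --- but using the already-known bounds $|s_\varepsilon|,|\varepsilon u_{\varepsilon,s_\varepsilon}|\le 3M$ to secure $\sum\widetilde F^{ii}\ge c_8>0$ as in \eqref{8.4} --- yields uniform $C^1$, hence $C^0$ via the one-point normalisation, hence $C^2$ from \cite{CZ16}, and one passes to a subsequential limit as $\varepsilon\to0$. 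The regularisation buys the $s$-bound for free and reuses existing existence theory; your direct scheme must supply that bound by hand.

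Concretely, you derive $|s|\le C$ only \emph{after} the $C^2$ estimate, but both the gradient estimate (which needs $\sum F^{ii}\ge c>0$, hence $\sigma_k/\sigma_l=fe^s$ bounded below) and your oscillation argument (which invokes $\Delta u\ge c>0$, where by Newton--MacLaurin $c\sim e^{s/(k-l)}$) already presuppose a lower bound on $s$; as written the argument is circular when $k-l\ge2$. The remedy is to bound $s$ first. For the upper bound, integrate $\Delta u\ge n\big(fe^s C_n^l/C_n^k\big)^{1/(k-l)}$ against the divergence identity $\int_\Omega\Delta u=\int_{\partial\Omega}\varphi>0$ (positivity holds because $u_0$ is $k$-convex with $(u_0)_\nu=\varphi$). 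For the lower bound, compare with $u_0$ via $v=u-u_0+\epsilon h$ where $h$ is a strictly convex defining function: on $\partial\Omega$ one has $v_\nu=\epsilon>0$, forcing an interior minimum at which $D^2u\ge D^2u_0-\epsilon D^2h$, so for small fixed $\epsilon$ one obtains $fe^s\ge\tfrac12\min_{\overline\Omega}\sigma_k(D^2u_0)/\sigma_l(D^2u_0)>0$. With $|s|\le C$ established up front, your continuity programme goes through as you describe; your uniqueness proof is correct and in fact fills in a point the paper leaves implicit.
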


 \begin{proof}

To find a pair $(s,u)$ solving the above equation, we consider the following approximating equation
\[
\left( { * _{\varepsilon ,s} } \right)\left\{ \begin{array}{*{20}c}
   {\frac{\sigma _k( {D^2 u})}{\sigma _l( {D^2 u})} = f\left( x \right)e^{s + \varepsilon u},\quad x \in \Omega,}  \\
   {\frac{{\partial u}}{{\partial \nu}} = \varphi \left( x \right), \quad \qquad \qquad x \in \partial \Omega.}  \\
\end{array} \right.
\]

Let $u_{\varepsilon,s}(x)$ be the $k$-admissible solution of ($* _{\varepsilon ,s}$) if the solution exists, then we have
\[u_{\varepsilon,s}(x)=u_{\varepsilon,0}(x)-\frac{s}{\varepsilon}.\]
Thus $u_{\varepsilon,s}(x)$ is strictly decreasing with respect to $s$.\\

In the following, we will prove that for any $\varepsilon>0$, there exists a unique constant $s_\varepsilon$ which is uniformly bounded such that $|u_{\varepsilon,s_\varepsilon}|_{C^k(\bar\Omega)}$ ($k$ is any positive integer) is uniformly bounded. Thus by extracting subsequence, we have $s_{\varepsilon_i}$ converges to $s$ and $u_{\varepsilon,s_\varepsilon}$ converges to a solution $u$ of our problem \eqref{8.1}.

\textbf{Step 1:} If we choose $M$ sufficiently large, we have that $ u_\varepsilon^+ = u_0+\frac{M}{\varepsilon}$ is a supersolution of $(*_{\varepsilon,0})$ and $ u_\varepsilon^-=u_0-\frac{M}{\varepsilon}$ is a subsolution of $(*_{\varepsilon,0})$, i.e. $u_\varepsilon ^- \le u_{\varepsilon, 0}\leq u_\varepsilon ^ +$. Indeed, we have
\begin{align}\label{8.2}
 &\frac{\sigma _k}{\sigma _l} \left( {D^2 u_{\varepsilon ,0} } \right)e^{ - \varepsilon u_{\varepsilon ,0} }  -\frac{\sigma _k}{\sigma _l} \left( {D^2 u_\varepsilon ^ +  } \right)e^{ - \varepsilon u_\varepsilon ^ +  } \notag \\
  =& \int_0^1 {\frac{d}{{dt}}} \left[ {\frac{\sigma _k}{\sigma _l}  \left( {D^2 \left( {tu_{\varepsilon ,0}  + (1 - t)u_\varepsilon^ + } \right)} \right)e^{ - \varepsilon \left( {tu_{\varepsilon ,0}  + (1 - t) u_\varepsilon ^ + } \right)} } \right]dt \notag\\
  =& a^{ij} \left( {u_{\varepsilon ,0}  -  u_\varepsilon ^ + } \right)_{ij}  - c\left( {u_{\varepsilon ,0}  -  u_\varepsilon ^ + } \right),
\end{align}
where
\[
a^{ij}= \int_0^1 {\frac{\partial \big(\frac{\sigma _k}{\sigma _l} ( {D^2 \left( {tu_{\varepsilon ,0}  + (1 - t) u_\varepsilon ^ + } \right)} ) \big)}{\partial u_{ij}}e^{ - \varepsilon \left( {tu_{\varepsilon ,0}  + (1 - t) u_\varepsilon ^ + } \right)} } dt,
\]
is positive definite and
\[
c=\varepsilon \int_0^1 {\frac{\sigma _k}{\sigma _l}  \left( {D^2 \left( {tu_{\varepsilon ,0}  + (1 - t) u_\varepsilon ^ + } \right)} \right)e^{ - \varepsilon \left( {tu_{\varepsilon ,0}  + (1 - t) u_\varepsilon ^ + } \right)} } dt>0.
\]

On the other hand, by the equation
\begin{align}\label{8.3}
 &\frac{\sigma _k}{\sigma _l}  \left( {D^2 u_{\varepsilon ,0} } \right)e^{ - \varepsilon u_{\varepsilon ,0} }  - \frac{\sigma _k}{\sigma _l}  \left( {D^2 u_\varepsilon ^ +  } \right)e^{ - \varepsilon u_\varepsilon ^ +  } \notag \\
  =& f\left( x \right) - \frac{\sigma _k}{\sigma _l}  ( {D^2 u_0 } ) e^{ - M - \varepsilon u_0 } \notag\\
  =& f( x )\left( {1 -\frac{1}{f(x)} e^{ - M - \varepsilon u_0 } \frac{\sigma _k}{\sigma _l}  ( {D^2 u_0 } )} \right) > 0,
 \end{align}
where we choose $M=1+\max |\log f|+ \max|u_0|+\max{\big|\log{\big(\frac{\sigma _k({D^2 u_0})}{\sigma _l({D^2 u_0})} \big)}\big|}$ and $\varepsilon<1$.

Combining \eqref{8.2} with \eqref{8.3}, we obtain
\begin{align*}
\left\{ \begin{array}{*{20}c}
   {a^{ij} \left( {u_{\varepsilon ,0}  - u_\varepsilon ^ +  } \right)_{ij}  - c\left( {u_{\varepsilon ,0}  - u_\varepsilon ^ +  } \right) > 0,\quad x \in \Omega, }  \\
   {\frac{{\partial \left( {u_{\varepsilon ,0}  - u_\varepsilon ^ +  } \right)}}{{\partial \nu }} = \frac{{\partial \left( {u_{\varepsilon ,0}  - u_0 } \right)}}{{\partial \nu }} = 0, \qquad \qquad x \in \partial \Omega.}  \\
\end{array} \right.
 \end{align*}
The maximum principle yields that $u_{\varepsilon ,0}<u_\varepsilon ^ +$ in $\Omega$. Similarly, $u_{\varepsilon ,0}>u_\varepsilon ^ -$ in $\Omega$. Thus we have $u_{\varepsilon, M}< u_0< u_{\varepsilon, -M}$ in $\Omega$.  By strictly decreasing property of $u_{\varepsilon, s}$, for any $\varepsilon\in(0,1)$, there exists a  unique $s_{\varepsilon}\in (-M, M)$ such that $u_{\varepsilon, s_{\varepsilon}}(y_0) =u_0(y_0)$ for a fixed point $y_0 \in \Omega$.  We also have $|\varepsilon u_{\varepsilon,s_{\varepsilon}}|_{C^0(\bar\Omega)} \le 2M + \varepsilon |u_0|_{C^0(\bar\Omega)} \le 3M$.

\textbf{Step 2:} We prove that for $\varepsilon>0$ sufficiently small, $|Du_{\varepsilon,s_{\varepsilon}}|\le C_{25}$, where $C_{25}$ is a positive constant independent of $|u_{\varepsilon,s_{\varepsilon}}|_{C^0(\bar\Omega)}$.

We denote $\widetilde{F}^{ij}  =: \frac{\partial \big(\frac{\sigma _k(D^2u)}{\sigma _l(D^2u)}  \big)}{\partial u_{ij}}$. By the equation, we have
\begin{align} \label{8.4}
\sum\limits_{i = 1}^n {\widetilde{F}^{ii} }  =& \frac{{(n - k + 1)\sigma _{k - 1}\sigma _l - (n - l + 1)\sigma _k \sigma _{l - 1} }} {{\sigma _l ^2 }}\geq \frac{{k - l}}{k}(n - k + 1) \frac{{\sigma _{k - 1} (\lambda )}}{{\sigma _l (\lambda )}}  \notag \\
\ge& c(n,k)\big(\frac{\sigma _k(D^2u)}{\sigma _l(D^2u)}  \big)^{ \frac{k-l-1}{k-l}}  = c( {n,k} )f^{  \frac{k-l-1}{k-l}} e^{ \frac{k-l-1}{k-l} ( {s_\varepsilon   + \varepsilon u})}  \ge c_8 >0.
\end{align}

We use the following auxiliary function
\begin{align*}
 G = \log \left| {Dw} \right|^2  + ah,
\end{align*}
where $w= u - \varphi h$, $h$ is the defining function with $|Dh|^2\le \kappa_1$ and ${D^2h}\ge\kappa_2I$, and $a=\min\{2 \kappa_2, \frac{\kappa_2}{\kappa_1} \}$. Suppose that $G$ attains its maximum at the point $x_0$. We claim that $x_0\in \Omega$. In fact, $x_0\in\partial\Omega$, we assume $x_0 =0$ and choose the coordinate such that $\partial\Omega\cap B_{\delta}(x_0)$ can be represented as $(x', x_n)$ with $x_n=\rho(x')$, where $\rho(x')$ satisfies $\rho(x_0')=0$ and $D_{x'}\rho(x_0')=0$. Also we have $\nu(x_0)=(0,\cdots,0,-1) = Dh(x_0)$, and then $w_n(x_0) = u_n -\varphi \cdot (-1) = -u_\nu +\varphi =0$. Rotating the $x'$-axis, we can further assume that $w_1(x_0)=|Dw|(x_0)$. Moreover we have $u_1(x_0)=w_1(x_0)$ and $u_i(x_0)=w_i(x_0)=0$ for $2\le i\le n-1$.

By Hopf lemma, we can get
\begin{align*}
0>-\frac{\partial G}{\partial \nu}(x_0)=&\frac{\partial G}{\partial x_n}(x_0)\\
=&\frac{2w_kw_{kn}}{|Dw|^2}+ah_n
=\frac{2w_{1n}}{w_1} - a\\
=&\frac{2u_{1n}}{w_1} - \frac{2\varphi_{1}h_n+2\varphi_{n}h_1+2\varphi h_{1n}}{w_1} - a\\
=&\frac{2u_{1n}}{w_1}-\frac{-2\varphi_{1}+2\varphi h_{1n}}{w_1} - a\\
=&2{\nu^1}_1 - a \geq 2 \kappa_2 - a \geq 0,
\end{align*}
where we have used the equality $u_{n1}=u_{k}{\nu^k}_1-\varphi_{1}=u_{1}{\nu^1}_1+\varphi{\nu^n}_1-\varphi_{1}$. Contradiciton.

Hence $x_0\in\Omega$, and then we have
\begin{align*}
0 = G_i  = \frac{{2w_k w_{ki} }}{{\left| {Dw} \right|^2 }} + ah_i  = \frac{{2w_{1i} }}{{w_1 }} + ah_i, \quad i=1, \cdots, n.
\end{align*}
Hence
\begin{align*}
\frac{{w_{1i} }}{{w_1 }} =  - \frac{1}{2}ah_{i}, \quad  w_{1i}  =  - \frac{1}{2}w_1 ah_{i}, \quad i=1, \cdots, n.
\end{align*}
Since
\begin{align*}
G_{ij}  =& \frac{{2w_k w_{kij}  + 2w_{ki} w_{kj} }}{{\left| {Dw} \right|^2 }} - \frac{{4w_k w_{ki} w_l w_{lj} }}{{| Dw |^4 }}   + ah_{ij}  \\
=& \frac{{2w_{1ij} }}{{w_1 }} + \frac{{2w_{ki} w_{kj} }}{{w_1 ^2 }}  - a^2h_i h_j  + ah_{ij}.
\end{align*}

Then
\begin{align*}
0\ge \widetilde{F}^{ij} G^{ij}  =& \frac{{2\widetilde{F}^{ij} w_{1ij} }}{{w_1 }} + \frac{{2\widetilde{F}^{ij} w_{ki} w_{kj} }}{{w_1 ^2 }}  - a^2\widetilde{F}^{ij} h_i h_j  + a\widetilde{F}^{ij} h_{ij}  \\
\ge& \frac{{2\widetilde{F}^{ij} w_{1ij} }}{{w_1 }} +\frac{{2\widetilde{F}^{ij} w_{1i} w_{1j} }}{{w_1 ^2 }}- a^2\widetilde{F}^{ij} h_i h_j  + a \widetilde{F}^{ij} h_{ij}  \\
=& \frac{{2\widetilde{F}^{ij} u_{ij1}  - 2\widetilde{F}^{ij} \left( {\varphi h} \right)_{ij1} }}{{w_1 }} - \frac{1}{2}a^2  \widetilde{F}^{ij} h_i h_j  + a\widetilde{F}^{ij} h_{ij}  \\
\ge& \frac{2(\varepsilon fu_1+f_1)e^{s_\varepsilon+\varepsilon u}}{{w_1 }} - \frac{ C_{26}}{{w_1 }}\sum\limits_{i = 1}^n {\widetilde{F}^{ii} }  - \frac{1}{2}a^2 \widetilde{F}^{ij} h_i h_j  + a\widetilde{F}^{ij} h_{ij}  \\
\ge&  - \varepsilon C_{27} - C_{28} \frac{1}{{w_1 }}\sum\limits_{i = 1}^n {\widetilde{F}^{ii} }  + a( {\kappa_2  - \frac{a\kappa_1}{2}})\sum\limits_{i = 1}^n {\widetilde{F}^{ii} }  \\
\ge& - \varepsilon C_{27} + (\frac{a\kappa_2}{2}- C_{28} \frac{1}{{w_1 }}) \sum\limits_{i = 1}^n {\widetilde{F}^{ii} },
\end{align*}
hence we can get $w_1(x_0)$ is bounded if we choose $\varepsilon$ sufficiently small. Then we can get
\begin{align*}
|Du_{\varepsilon, s_\varepsilon}| \leq |D w|+ |\varphi||h| \leq C_{29}.
\end{align*}

\textbf{Step 3:} From the choice of $s_\varepsilon$, we know $u_{\varepsilon, s_\varepsilon}(y_0)=u_0(y_0)$. Then we have that
\begin{align*}
|u_{\varepsilon, s_\varepsilon}|_{C^0(\overline\Omega)} = u_{\varepsilon, s_\varepsilon} (x_1) \leq u_{\varepsilon, s_\varepsilon}(y_0) + |Du_{\varepsilon, s_\varepsilon}|_{C^0} |x_1 - y_0| =u_0(y_0)+ |Du_{\varepsilon, s_\varepsilon}|_{C^0} |x_1 - y_0| \le C_{30}.
\end{align*}
And the second order estimate now holds by the same calculations in \cite{CZ16}. Thus we have the higher order estimates as in \cite{LT86}. Therefore by extracting subsequence, we have $s_{\varepsilon_i}$ converges to $s_\infty$ and $u_{\varepsilon, s_\varepsilon}$ converges to a $k$-convex function $u_{ell}^\infty$ which satisfies equation\eqref{8.1} with $s = s_\infty$.

\end{proof}

\subsection{A priori estimates of \eqref{1.9}}

In this subsection, we prove the following a priori estimates of \eqref{1.9}.

\textbf{(1) $u_t$-estimate.}

Following the proof of \eqref{3.1} in Lemma \ref{lem3.1}, we can get
\begin{align} \label{8.5}
|u_t(x,t) | \leq \max|u_t(x,0) |  \leq C_{31}, \quad \forall (x,t) \in \Omega \times [0, T),
\end{align}
where $C_{31}$ depends only on $n$, $k$, $l$, $\min f$, $|f|_{C^0}$ and $|u_0|_{C^2}$.

\textbf{(2) $|Du|$ estimate.}

For any $T_0 \in (0, T)$, we will prove that
\begin{align}\label{8.6}
\max\limits_{\overline{\Omega }\times [0, T_0]} |Du| \leq  C_{32},
\end{align}
where $C_{32}$ depends only on $n$, $k$, $l$, $\Omega$, $|Du_0|_{C^0}$, $\min f$, $|f|_{C^1}$ and $|\varphi|_{C^3}$, but is independent of $|u|_{C^0}$ and $T_0$.

Since $\Omega$ is smooth and strictly convex, there exist a defining function $h \in C^\infty (\overline{\Omega})$ and positive constants $a_0$ and $A_0$ such that
\begin{align*}
& h = 0 \quad \text{ on } \partial \Omega, \quad h < 0 \quad \text{ in } \Omega; \\
& |D h| = 1 \quad \text{ on } \partial \Omega,\quad  |D h| \leq 1 \quad \text{ in } \Omega; \\
& 0 < a_0 I_n \leq D^2 h \leq A_0 I_n  \quad \text{ in } \Omega.
\end{align*}

Denote $w (x,t) = u(x,t)- \varphi(x) h(x)$, and we consider the following auxiliary function in $\overline{\Omega }\times [0, T_0]$
\begin{align*}
 G (x,t) = \log \left| {Dw} \right|^2  + a h,
\end{align*}
where $a=\frac{a_0}{2}$. Suppose that $G$ attains its maximum at the point $(x_0, t_0) \in \overline{\Omega }\times [0, T_0]$. If $t_0 =0$, then
the a priori estimate holds directly. In the following, we always assume $t_0 >0$. Firstly, we claim that $x_0\in \Omega$. In fact, $x_0\in\partial\Omega$, we assume $x_0 =0$ and choose the coordinate such that $\partial\Omega\cap B_{\delta}(x_0)$ can be represented as $(x', x_n)$ with $x_n=\rho(x')$, where $\rho(x')$ satisfies $\rho(x_0')=0$ and $D_{x'}\rho(x_0')=0$. Also we have $\nu(x_0)=(0,\cdots,0,-1) = Dh(x_0)$, and then $w_n(x_0, t_0) = u_n (x_0, t_0)-\varphi \cdot (-1) = -u_\nu +\varphi =0$. Rotating the $x'$-axis, we can further assume that $w_1(x_0, t_0)=|Dw|(x_0, t_0)$. Moreover we have $u_1(x_0, t_0)=w_1(x_0, t_0)$ and $u_i(x_0, t_0)=w_i(x_0, t_0)=0$ for $2\le i\le n-1$. Then we can get
\begin{align*}
0\geq  -\frac{\partial G}{\partial \nu}(x_0, t_0)=&\frac{\partial G}{\partial x_n}(x_0, t_0)\\
=&\frac{2w_kw_{kn}}{|Dw|^2}+ah_n
=\frac{2w_{1n}}{w_1} - a\\
=&\frac{2u_{1n}}{w_1} - \frac{2\varphi_{1}h_n+2\varphi_{n}h_1+2\varphi h_{1n}}{w_1} - a\\
=&\frac{2u_{1n}}{w_1}-\frac{-2\varphi_{1}+2\varphi h_{1n}}{w_1} - a\\
=&2{\nu^1}_1 - a \geq 2 a_0 - a > 0,
\end{align*}
where we have used the equality $u_{n1}=u_{k}{\nu^k}_1-\varphi_{1}=u_{1}{\nu^1}_1+\varphi{\nu^n}_1-\varphi_{1}$. Contradiciton.

Hence $x_0\in\Omega$, and we can choose the coordinate such that $w_1(x_0, t_0)=|Dw|(x_0, t_0)$. Then we have
\begin{align*}
0 = G_i  = \frac{{2w_k w_{ki} }}{{\left| {Dw} \right|^2 }} + ah_i  = \frac{{2w_{1i} }}{{w_1 }} + ah_i, \quad i=1, \cdots, n,
\end{align*}
and
\begin{align*}
0 \leq G_t  = \frac{{2w_k w_{kt} }}{{\left| {Dw} \right|^2 }}  = \frac{{2u_{1t} }}{{w_1 }}.
\end{align*}
Hence
\begin{align*}
\frac{{w_{1i} }}{{w_1 }} =  - \frac{1}{2}ah_{i}, \quad  w_{1i}  =  - \frac{1}{2}w_1 ah_{i}, \quad i=1, \cdots, n.
\end{align*}
Since
\begin{align*}
G_{ij}  =& \frac{{2w_k w_{kij}  + 2w_{ki} w_{kj} }}{{\left| {Dw} \right|^2 }} - \frac{{4w_k w_{ki} w_l w_{lj} }}{{| Dw |^4 }}   + ah_{ij}  \\
=& \frac{{2w_{1ij} }}{{w_1 }} + \frac{{2w_{ki} w_{kj} }}{{w_1 ^2 }}  - a^2h_i h_j  + ah_{ij}.
\end{align*}

Denote $F^{ij}  =: \frac{\partial \big(\log \frac{\sigma _k(D^2u)}{\sigma _l(D^2u)}  \big)}{\partial u_{ij}}$. By the equation and $u_t$ estimate, we have
\begin{align} \label{8.7}
\sum\limits_{i = 1}^n {F^{ii} }  =&\frac{\sigma _k}{\sigma _l} \frac{{(n - k + 1)\sigma _{k - 1}\sigma _l - (n - l + 1)\sigma _k \sigma _{l - 1} }} {{\sigma _l ^2 }}  \notag \\
\ge& c(n,k)\big(\frac{\sigma _k(D^2u)}{\sigma _l(D^2u)}  \big)^{ \frac{-1}{k-l}}  = c( {n,k} )f^{  \frac{-1}{k-l}} e^{ \frac{-1}{k-l} u_t}  \ge c_9 >0.
\end{align}
Then
\begin{align*}
0\ge F^{ij} G^{ij} -G_t  =& \frac{{2F^{ij} w_{1ij} }}{{w_1 }} + \frac{{2F^{ij} w_{ki} w_{kj} }}{{w_1 ^2 }}  - a^2F^{ij} h_i h_j  + aF^{ij} h_{ij} -\frac{{2u_{1t} }}{{w_1 }} \\
\ge& \frac{{2F^{ij} w_{1ij} }}{{w_1 }} +\frac{{2F^{ij} w_{1i} w_{1j} }}{{w_1 ^2 }}- a^2F^{ij} h_i h_j  + a F^{ij} h_{ij}  -\frac{{2u_{1t} }}{{w_1 }}\\
=& \frac{{2F^{ij} u_{ij1}  - 2F^{ij} \left( {\varphi h} \right)_{ij1} }}{{w_1 }} - \frac{1}{2}a^2  F^{ij} h_i h_j  + aF^{ij} h_{ij}  -\frac{{2u_{1t} }}{{w_1 }}\\
\ge& \frac{2f_1 / f}{{w_1 }} - \frac{ C_{33}}{{w_1 }}\sum\limits_{i = 1}^n {F^{ii} }  - \frac{1}{2}a^2 F^{ij} h_i h_j  + aF^{ij} h_{ij}  \\
\ge&  -  \frac{2|f_1 |/ f}{{w_1 }}- C_{34} \frac{1}{{w_1 }}\sum\limits_{i = 1}^n {F^{ii} }  + a( {a_0  - \frac{a}{2}})\sum\limits_{i = 1}^n {F^{ii} } ,
\end{align*}
hence we can get $w_1(x_0,t_0)$ is bounded, and then \eqref{8.6} holds.

\textbf{(3) $|D^2 u|$ estimate.}

Following the proof of Theorem \ref{th6.1}, we can get
\begin{align}
|D^2 u|   \leq C_{35}, \quad \forall (x,t) \in \Omega \times [0, T),
\end{align}
where $C_{35}$ depends only on $n$, $k$, $l$, $\Omega$, $|u_0|_{C^2}$, $|D u|_{C^0}$, $\min f$, $|f|_{C^2}$ and $|\varphi|_{C^3}$, but is independent of $|u|_{C^0}$.

\subsection{Proof of Theorem \ref{th1.2}}:

\textbf{(1) We first get a bound for the solution $u$ of \eqref{1.9}.}

Now denote $u^\infty( x,t) = u_{ell}^\infty  \left( x \right) +s^\infty t$, where $u_{ell}^\infty$ is the solution obtained  in Theorem~\ref{th8.1}, then $u^\infty$ satisfies
\begin{align}
 \left\{ \begin{array}{l}
   u^\infty  _t  = \log \frac{\sigma _k ( {D^2 u^\infty  })}{\sigma _l ( {D^2 u^\infty  })} - \log f,  \quad (x,t) \in \Omega \times (-\infty, + \infty),  \\
   \frac{{\partial u^\infty  }}{{\partial \nu }} = \varphi \left( x \right), \qquad \qquad \quad \qquad (x,t) \in \partial \Omega \times (-\infty, + \infty),  \\
   u^\infty  \left( {x,0} \right) = u_{ell}^\infty (x),  \qquad\quad  \qquad x \in \Omega.  \\
\end{array} \right.
\end{align}

Take $C_{36} = \mathop {\max }\limits_{\bar \Omega } |u_{ell}^\infty  | + \mathop {\max }\limits_{\bar \Omega } \left| {u_0 } \right|$, then \begin{align}
u_{ell}^\infty  \left( x \right) - C_{36}  \le u \left( {x,0} \right) = u_0 ( x ) \le u^\infty  ( x,0) + C_{36},  \quad \forall x\in \Omega.\notag
\end{align}
Thus by parabolic maximum principle we obtain
\begin{align}
u_{ell}^\infty  - C_{36}  \le u(x, t) \le u_{ell}^\infty  + C_{36}, \quad  (x, t) \in \Omega \times  (0, + \infty). \notag
\end{align}
That is, we obtain the $C^0$ estimate of $u$
\begin{align}\label{8.10}
s^\infty t - C_{37}  \le u(x, t) \le s^\infty t + C_{37}, \quad (x, t) \in \Omega \times  (0, + \infty),
\end{align}
where $C_{37} =C_{36} +\mathop {\max }\limits_{\bar \Omega } |u_{ell}^\infty  |$.

\textbf{(2) We prove the solution $u$ of \eqref{1.9} is longtime existence and smooth.}

The $C^1$ and $C^2$ estimates hold as in Subsection 8.2, and the $C^0$ estimate is established as above.
Following the discussions in \cite{SS03}, we can obtain the existence of the smooth $k$-admissible solution $u(x,t)$, and all higher derivatives of $u$ have uniform bounds. By the uniform estimates of $u$ and the uniform parabolicity of equation \eqref{1.9}, the solution $u(x,t)$ exists for all time, that is $T = +\infty$.

\textbf{(3) Now we will show that $u$ converges to a translating solution as $t \rightarrow + \infty$.}

To obtain the convergence, we just need to prove that there exists a constant $a$ such that
\begin{align} \label{8.11}
\lim\limits_{t \rightarrow +\infty} |u ( {x,t} )- u_{ell}^\infty  \left( x \right) - s^\infty t  -a |_{C^m(\Omega)} =0,
\end{align}
holds for any integer $m \geq 0$.

Obviously, $(u_{ell}^\infty,  s^\infty)$ is a solution of \eqref{1.9}, then $(u_{ell}^\infty +a,  s^\infty)$ is also a solution of \eqref{1.9}.

We denote $w(x,t):= u(x,t) - u^\infty(x,t)$, then it satisfies
\begin{align}
\left\{ \begin{array}{l}
    w_t  = a^{ij} w_{ij} ,\quad (x,t) \in \Omega \times (0, + \infty),  \\
   \frac{{\partial w}}{{\partial \nu }} = 0, \qquad \quad  x \in \partial \Omega.  \\
\end{array} \right.
\end{align}
where $a^{ij}$ is positive definite. If there exits some time $t_0$ such that $w$ is constant in $\Omega\times[t_0, \infty)$, i.e. $u=u^{\infty}$ in $\Omega\times[t_0, \infty)$. Thus $u$ is a translating solution. If for any $t>0$, $w$ is not  constant in $\Omega\times[t, \infty)$. We claim that $\mathbf{osc~ } w(\cdot,t) = \max \limits_{\overline\Omega}w(x,t)-\min \limits_{\overline\Omega}w(x,t)$ is strictly decreasing. In fact, for any $t_1<t_2$, there hold by the maximum principle and Hopf Lemma,
\begin{align}
\max\limits_{\bar \Omega } u( {x,t_1 }) > \max \limits_{\bar \Omega } u( {x,t_2 }),
\end{align}
and
\begin{align}
\min\limits_{\bar \Omega } u( {x,t_1 }) < \min \limits_{\bar \Omega } u( {x,t_2 } ).
\end{align}
Thus $\mathbf{osc~ }{w(\cdot,t_1)} > \mathbf{osc~ } {w(\cdot,t_2)}$, i.e. $\mathbf{osc~ }{w(\cdot,t)}$ is strictly decreasing. Hence we can get
\begin{align}
\lim\limits_{t \to \infty } \mathbf{osc~ }{w(\cdot,t)} = \delta \geq 0. \notag
\end{align}

 In the following, we prove $\delta=0$. We define $u^{i}(x,t):=u(x,t+t_i)-s^\infty t_i$ for a sequence $\{t_i\}$ which converges to $\infty$. Since \eqref{8.10}, we have $-C_{37} + ts^\infty   \le u^{i}(x,t)\le C_{37} + ts^\infty$. And $|u^i|_{C^k}\le C$, for any $k\ge 1$. Hence, there exists a subsequence (for convenience we also denote) $u^i$ such that $u^i$ converges locally uniformly in any $C^k$-norm to a $k$-convex function $u^*$. Moreover, $u^*$ exists for all time $t \in (-\infty, + \infty)$ and satisfies the following equation
\begin{align}
 \left\{ \begin{array}{l}
   u^* _t  = \log \frac{\sigma _k ( {D^2 u^* })}{\sigma _l ( {D^2 u^* })} - \log f,  \quad (x,t) \in \Omega \times (-\infty, + \infty),  \\
   \frac{{\partial u^* }}{{\partial \nu }} = \varphi ( x ), \qquad \qquad \quad \qquad (x,t) \in \partial \Omega \times (-\infty, + \infty).  \\
\end{array} \right.
\end{align}
So for any time $t \in (-\infty, + \infty)$, we have
\begin{align}
\mathbf{osc~ }{(u^*-u^\infty)(\cdot,t)}=&\lim\limits_{i \to \infty }\mathbf{osc~ }((u(\cdot,t+t_i)-s^\infty t_i-u^\infty(\cdot,t)) \notag \\
=&\lim \limits_{i \to \infty }\mathbf{osc~ }((u(\cdot,t+t_i)-u^\infty(\cdot,t+t_i)) \notag \\
=&\lim \limits_{i \to \infty }\mathbf{osc~ }(w(\cdot,t+t_i)) \notag \\
=&\delta. \notag
\end{align}
Namely,
\begin{align} \label{8.16}
\max \limits_{\overline\Omega}(u^*-u^\infty)- \min \limits_{\overline\Omega}(u^*-u^\infty) \equiv \delta,
\end{align}
holds for any time $t \in (-\infty, + \infty)$. It is easy to know $u^*-u^\infty$ satisfies
 \[
\left\{ \begin{array}{l}
 \left( {u^* - u^\infty  } \right)_t  = a^{ij} \left( {u^* - u^\infty  } \right)_{ij},\quad (x,t) \in \Omega \times (-\infty, + \infty),  \\
 \frac{{\partial \left( {u^* - u^\infty  } \right)}}{{\partial \nu}} = 0,\qquad \qquad \qquad \qquad (x,t) \in \partial \Omega \times (-\infty, + \infty). \\
 \end{array} \right.
\]
Then $\max \limits_{\overline\Omega}(u^*-u^\infty)$ is decreasing with respect to $t$, and $\min \limits_{\overline\Omega}(u^*-u^\infty)$ is increasing by the maximum principle. Hence from \eqref{8.16}, $\max \limits_{\overline\Omega}(u^*-u^\infty)$ and $\min \limits_{\overline\Omega} (u^*-u^\infty)$ are constants for any time $t \in (-\infty, + \infty)$. By the strong maximum principle and Hopf Lemma, $u^*-u^\infty$ is a constant, and this implies $\delta=0$.

Now, we have $\lim\limits_{t \to \infty }\mathbf{osc~ }{w(\cdot,t)}=0$, then there exists a constant $a$ such that $\lim\limits_{t \to \infty }\max \limits_{\overline\Omega}(u-u^\infty)=\lim\limits_{t \to \infty }\min \limits_{\overline\Omega}(u-u^\infty)= a$. Thus $\lim\limits_{t \to \infty }|u(\cdot,t)-u^\infty(\cdot,t) -a |_{C^0(\overline\Omega)}=0$. The $C^1$-norm convergence follows by the following interpolation inequality
\begin{align*}
|D v|^2_{C^0(\overline\Omega)}\le c(\Omega)|v|_{C^0(\overline\Omega)}(|D^2 v|_{C^0(\overline\Omega)}+|Dv|_{C^0(\overline\Omega)})
\end{align*}
for $v=u-u^\infty -a$. The $C^k$-norm convergence is similar. Hence \eqref{8.11} holds, which means the solution $u(x,t)$ converges to a translating solution as $t \rightarrow + \infty$. The proof of Theorem \ref{th1.2} is finished.

\end{document}